\documentclass[11pt,a4paper]{amsart}
\usepackage{amssymb,amsmath}
\usepackage{graphicx}
\usepackage{bm,bbm}
\usepackage{color}
\usepackage{tikz}
\usepackage{ifthen}
\usetikzlibrary{snakes}
\usetikzlibrary{patterns}
\usepackage{algorithm}
\usepackage{setspace}
\usepackage[noend]{algpseudocode}
\usepackage{pgfplots}
\usepackage{enumitem}
\usepackage{array,blkarray}
\usepackage{bigdelim}
\usepackage{listings} 
\usepackage[hidelinks]{hyperref}
\usepackage{amssymb}
\usepackage{isotope}
\usepackage{stmaryrd}
\usepackage[]{placeins}
\usepackage[normalem]{ulem}
\newcolumntype{L}[1]{>{\raggedright\let\newline\\\arraybackslash\hspace{0pt}}m{#1}}
\newcolumntype{C}[1]{>{\centering\let\newline\\\arraybackslash\hspace{0pt}}m{#1}}
\newcolumntype{R}[1]{>{\raggedleft\let\newline\\\arraybackslash\hspace{0pt}}m{#1}}
%

%
%
\newtheorem{theorem}{Theorem}
\newtheorem{proposition}[theorem]{Proposition}

\theoremstyle{definition}
\newtheorem{definition}[theorem]{Definition}

\newtheorem{lemma}[theorem]{Lemma}

\theoremstyle{remark}
\newtheorem{remark}[theorem]{Remark}

\newtheorem{assumption}[theorem]{Assumption}
\numberwithin{theorem}{section}
\numberwithin{equation}{section}
\numberwithin{table}{section}
\numberwithin{figure}{section}
%

%
\definecolor{myBlue}{RGB}{113,104,238} 
\definecolor{myGreen}{RGB}{154,205,50} 
\definecolor{myGreen2}{RGB}{114,175,30} 
\definecolor{myRed}{RGB}{180,50,50}  
\definecolor{myOrange}{RGB}{225,92,22} 
\definecolor{lgray}{RGB}{200,200,200} 
\definecolor{llgray}{RGB}{155,155,155} 
\definecolor{mycolor1}{rgb}{0.00000,0.44700,0.74100}%
\definecolor{mycolor2}{rgb}{0.85000,0.32500,0.09800}%
\definecolor{mycolor3}{rgb}{0.92900,0.69400,0.12500}%
\definecolor{mycolor4}{rgb}{0.49400,0.18400,0.55600}%
\definecolor{mycolor5}{rgb}{0.46600,0.67400,0.18800}%
\definecolor{mycolor6}{rgb}{0.30100,0.74500,0.93300}%
\definecolor{mycolor7}{rgb}{0.63500,0.07800,0.18400}%
%

\newcommand\N{\mathbb N}

\newcommand\R{\mathbb R}


\DeclareMathOperator{\diag}{diag}

\DeclareMathOperator{\tol}{tol}
\DeclareMathOperator{\tr}{tr}
\DeclareMathOperator{\trace}{tr}

\DeclareMathOperator{\grad}{grad}

\DeclareMathOperator{\qf}{qf}

\DeclareMathOperator*{\argmin}{arg\,min}

\newcommand{\Stiefel}{\mathrm{St}(N,V)}

\newcommand\calA{\mathcal A}
\newcommand\calB{\mathcal B}

\newcommand\calE{\mathcal E}

\newcommand\calR{\mathcal R}

\newcommand\calS{\mathcal S}

\newcommand{\etabf}{{\bm \eta}}
\newcommand{\xibf}{{\bm \xi}}
\newcommand{\phibf}{{\bm \phi}}
\newcommand{\zetabf}{{\bm \zeta}}
\newcommand{\varphibf}{{\bm \varphi}}
\newcommand{\psibf}{{\bm \psi}}

\newcommand{\ybf}{\bm{y}}
\newcommand{\qbf}{\bm{q}}
\newcommand{\sbf}{\bm{s}}
\newcommand{\ubf}{\bm{u}}
\newcommand{\vbf}{\bm{v}}
\newcommand{\wbf}{\bm{w}}
\newcommand{\zbf}{\bm{z}}


\def\dr{\,\text{d}r}
\def\drp{\,\text{d}r'}

\def\dt{\,\text{d}t}
\def\dx{\,\text{d}x}

\newcommand{\Onebf}{\bm{I}_{N}}
\newcommand{\Nullbf}{\bm{0}_{N}}

\newcommand{\out}[2]{\llbracket{#1},{#2}\rrbracket_H}
\textheight=225mm
\textwidth=150mm
\evensidemargin=30.0mm
\oddsidemargin=30.0mm
\hoffset=-25.4mm
\begin{document}
\title[Energy-adaptive Riemannian Optimization]{Energy-adaptive Riemannian Optimization\\ on the Stiefel Manifold}
\author[]{R.~Altmann$^*$, D.~Peterseim$^{\dagger}$, T.~Stykel$^{\dagger}$}
\address{${}^{*}$ Institute of Mathematics, University of Augsburg, Universit\"atsstr.~12a, 86159 Augsburg, Germany}
\address{${}^{\dagger}$ Institute of Mathematics \& Centre for Advanced Analytics and Predictive Sciences (CAAPS), University of Augsburg, Universit\"atsstr.~12a, 86159 Augsburg, Germany}
\email{\{robert.altmann, daniel.peterseim, tatjana.stykel\}@uni-a.de}
\thanks{The work of Daniel Peterseim is part of a project that has received funding from the European Research Council (ERC) under the European Union's Horizon 2020 research and innovation programme (Grant agreement No.~865751 --  RandomMultiScales).}
\date{\today}
\keywords{}
%
%
\begin{abstract}
This paper addresses the numerical solution of nonlinear eigenvector problems such as the Gross-Pitaevskii and Kohn-Sham equation arising in computational physics and chemistry. These problems characterize critical points of energy minimization problems on the infinite-dimensional Stiefel manifold. To efficiently compute minimizers, we propose a novel Riemannian gradient descent method induced by an energy-adaptive metric. Quantified convergence of the methods is established under suitable assumptions on the underlying problem. A non-monotone line search and the inexact evaluation of Riemannian gradients substantially improve the overall efficiency of the method. Numerical experiments illustrate the performance of the method and demonstrates its competitiveness with well-established schemes. 
\end{abstract}
%
%
\maketitle
%
{\tiny {\bf Key words.} Riemannian optimization, Stiefel manifold, Kohn-Sham model, Gross-Pitaevskii eigenvalue problem, nonlinear eigenvector problem
}\\
\indent
{\tiny {\bf AMS subject classifications.} {\bf 65N25}, {\bf 81Q10}} 
%
%
%
\section{Introduction}
This paper is devoted to the numerical solution of energy minimization problems stated on the infinite-dimensional Stiefel manifold of index $N$ containing~$N$-tuples of $L^2$-ortho\-nor\-mal functions. The Kohn-Sham model~\cite{HohK64,KohS65,LeB05} is a prototypical example. In this popular model from {\em density functional theory} in computational chemistry, the state of the system is described by $N>1$ functions (orbitals), which need to satisfy $L^2$-orthogonality conditions. The ground state of the system minimizes the Kohn-Sham energy under these orthogonality constraints, i.e.,~on the Stiefel manifold of index $N$, cf.~\cite{YanMLW09}. 
For $N=1$, the Stiefel manifold boils down to the unit sphere in $L^2$. In this special case, the Gross-Pitaevskii model for Bose-Einstein condensates of ultracold bosonic gases~\cite{LSY01,PiS03} is a relevant example. Its ground state is the global minimizer of the corresponding Gross-Pitaevskii energy functional on the Stiefel manifold which simply represents a unit mass constraint. 

More generally, the ground states of energy functionals on the Stiefel manifold as well as further critical points are characterized by coupled systems of eigenvalue problems of partial differential equations (PDEs) with eigenvector nonlinearities, so-called {\em nonlinear eigenvector problems}. Existing approximation methods for these problems are either linked to linear eigenvalue solvers or to Riemannian optimization. A well-known iteration scheme for the nonlinear eigenvector problem is the {\em self-consistent field iteration} (SCF). Each SFC iteration step involves the solution of a {\em linear} eigenvalue problem, see, e.g., \cite{CanL00,Can01,doi:10.1137/20M1332864} and~\cite{JarU21ppt} for its connection to Newton's method. On the Riemmanian side,  the {\em direct constrained minimization algorithm} (DCM) is very popular. DCM results from a standard minimization approach~\cite{YanMW06,AloA09,SchRNB09} and is based on the Riemannian gradient descent method in~$L^2$. However, this method requires suitable preconditioning to work. In the special case of the Gross-Pitaevskii eigenvalue problem, the DCM is known as the {\em discrete normalized gradient flow} \cite{BaD04}. Although empirically successful, the preconditioning or stable time discretization comes with the drawback of deviating from the gradient descent structure. In this case, the energy decay cannot be guaranteed anymore. 
In~\cite{HenP20}, an alternative Riemannian gradient descent scheme was proposed for the special case of the Gross-Pitaevskii problem, which is based on a~gradient flow defined in an~energy-adaptive metric. The resulting method is convergent and energy diminishing for sufficiently small step sizes. The energy diminishing property even gives rise to global convergence to the ground state~\cite{HenP20} and turns out to be valuable in the context of reliable a~ posteriori error control~\cite{HEID2021110165}. 

In this paper, we generalize this promising yet simple energy-adaptive Riemannian descent method to nonlinear eigenvector problems formulated on the Stiefel manifold. The general functional analytical setting of the considered problems is presented in Section~\ref{sect:generalModel}. 
Details on the infinite-dimensional Stiefel manifold, its tangent and normal spaces, and the orthogonal projection onto the tangent space are then discussed in Section~\ref{sec:Stiefel}. Therein, we show that the mentioned projection can be characterized by a saddle point problem, which facilitates the proposed algorithm significantly. 
Finally, several retractions are introduced, which are needed to transform tangent vectors back to the manifold. Section~\ref{sect:Riemannian} presents the novel energy-adaptive Riemannian gradient descent method. Its convergence analysis generalizes the approach of~\cite{Zha21ppt} for $N=1$. It is independent of the space dimension and, hence, also independent of possible spatial discretization by finite elements, spectral methods or related schemes. The convergence is further accelerated by the non-monotone line search algorithm of ~\cite{ZhaW04,WenY13}. Moreover, we identify a connection to a preconditioned version of DCM, which motivates the substantial reduction of the computational complexity of the new method related on inexact gradient computations. 
In Section~\ref{sect:examples}, we show that the Gross-Pitaevskii and Kohn-Sham models fit into the given framework. Numerical experiments for the Kohn-Sham model illustrate the performance of the presented method. Using the step size control and suitable inexact gradient computations prove the new approach competitive with established methods such as SCF and DCM. 
%
%
\section{Energy Minimization Problem on the Stiefel Manifold}\label{sect:generalModel}
This section introduces an abstract constrained PDE energy minimization problem and its connection to a coupled system of nonlinear eigenvector problems formulated on the infinite-dimensional Stiefel manifold of index~$N$. Particular examples such as the Gross-Pitaevskii eigenvalue problem and the Kohn-Sham model will be discussed in detail in Section~\ref{sect:examples}. 
%
%
%
\subsection{Spaces and bilinear forms}
We consider a~space $\tilde{V}\subseteq H^1(\Omega)$ for a~given domain $\Omega\subseteq \R^d$ and define 
\[
  V:=\tilde{V}^N, \qquad
  H := [L^2(\Omega)]^N
\]
with $N\ge1$. The suitable choice of the Hilbert space~$\tilde{V}$ depends on the particular application, cf.~the examples in Section~\ref{sect:examples}. Let $V^*$ denote the dual space of $V$. We assume that~$V\subset H\subset V^*$ form a~Gelfand triple~\cite[Ch.~23.4]{Zei90a}. 
Throughout this paper, we use the row-vector notation for $N$-frames, i.e., we write $\vbf = (v_1, \dots, v_N) \in V$. This allows us to adapt the notion of typical matrix-vector multiplication, i.e., we may multiply $\vbf$ by an $N\times N$ matrix from the right, leading again to an element of $V$. Furthermore, for $\vbf,\wbf\in H$, we define the~dot product $\vbf\cdot\wbf := \sum_{j=1}^N v_j w_j$. We say that the components of $\vbf\in V\setminus\{\bm{0}\}$ are linearly independent,
if there is no non-zero vector $x\in\mathbb{R}^N$ such that $\vbf x=0$.

On the pivot space $H$, we introduce an~outer product~$\out{\,\cdot\,}{\cdot\,}\colon H\times H\to \R^{N\times N}$ and an~inner product $(\,\cdot\,, \cdot\,)_H\colon H\times H\to \R$. More precisely, for $\vbf, \wbf \in H$, we define   
\begin{equation}\label{eq:outer}
  \out{\vbf}{\wbf} 
  := \begin{bmatrix}
  (v_1, w_1)_{L^2(\Omega)} & \dots & (v_1, w_N)_{L^2(\Omega)} \\
  \vdots & \ddots & \vdots \\ 
  (v_N, w_1)_{L^2(\Omega)} & \dots & (v_N, w_N)_{L^2(\Omega)}
  \end{bmatrix}
  \in \R^{N\times N}
\end{equation}
and
\begin{equation}\label{eq:inner}
  (\vbf, \wbf)_H 
  := \sum_{j=1}^N (v_j, w_j)_{L^2(\Omega)} 
  = \trace\, \out{\vbf}{\wbf},
\end{equation}
where $\trace$ denotes the trace of a~matrix. 
The inner product \eqref{eq:inner} induces the norm \linebreak $\|\vbf\|_H=\sqrt{(\vbf,\vbf)_H}$ on $H$. Some properties of the~outer product
\eqref{eq:outer} are collected in the following lemma, which follows from straight-forward calculations.
\begin{lemma}
\label{lem:propOuterProduct}
Consider~$\vbf, \wbf \in H$ and an arbitrary matrix $S\in\R^{N\times N}$. Then it holds that 
\[
  \out{\vbf}{\wbf S} = \out{\vbf}{\wbf} S, \qquad
  \out{\vbf S}{\wbf} = S^T \out{\vbf}{\wbf}, \qquad
  \out{\vbf}{\wbf} = \out{\wbf}{\vbf}^T. 
\]
\end{lemma}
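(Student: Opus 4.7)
The plan is to verify each of the three identities entrywise, using only the definition \eqref{eq:outer} of the outer product together with bilinearity of the $L^2(\Omega)$ inner product. Since $S\in\R^{N\times N}$ is a constant matrix acting on the right of an $N$-frame by the convention introduced in the paragraph on row-vector notation, the $j$-th component of $\wbf S$ is the linear combination $(\wbf S)_j = \sum_{k=1}^N w_k\, S_{kj}$, and analogously for $\vbf S$.

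First I would prove the identity $\out{\vbf}{\wbf S} = \out{\vbf}{\wbf}\, S$. The $(i,j)$-entry of the left-hand side is $(v_i,(\wbf S)_j)_{L^2(\Omega)} = \sum_k (v_i,w_k)_{L^2(\Omega)} S_{kj}$ by bilinearity, and this is exactly the $(i,j)$-entry of $\out{\vbf}{\wbf}\,S$. The second identity $\out{\vbf S}{\wbf} = S^T\out{\vbf}{\wbf}$ is established in the same manner: its $(i,j)$-entry is $((\vbf S)_i,w_j)_{L^2(\Omega)} = \sum_k S_{ki}(v_k,w_j)_{L^2(\Omega)} = \sum_k (S^T)_{ik}(v_k,w_j)_{L^2(\Omega)}$, which matches $(S^T\out{\vbf}{\wbf})_{ij}$. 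The third identity $\out{\vbf}{\wbf} = \out{\wbf}{\vbf}^T$ follows directly from the symmetry of the $L^2(\Omega)$ inner product, since $(v_i,w_j)_{L^2(\Omega)} = (w_j,v_i)_{L^2(\Omega)}$ is precisely the $(j,i)$-entry of $\out{\wbf}{\vbf}$.

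There is no real obstacle here; the proof is essentially a bookkeeping exercise of relabeling indices and invoking bilinearity and symmetry of the underlying scalar inner product. The only point worth emphasizing in the write-up is the row-vector convention, so that the reader sees why right-multiplication by $S$ acts column-wise on the frame while left-multiplication (i.e.\ acting on $\vbf$) transposes in the outer product.
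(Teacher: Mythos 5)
Your proof is correct and is precisely the ``straight-forward calculation'' that the paper alludes to without writing out: the paper omits an explicit proof, and your entrywise verification via bilinearity and symmetry of the $L^2(\Omega)$ inner product, together with the row-vector convention $(\wbf S)_j = \sum_k w_k S_{kj}$, is exactly the intended argument. Nothing is missing.
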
 
For the definition of the energy in the next subsection, we further introduce a (prob\-lem-dependent) bilinear form~$a_\phibf\colon V\times V\to \R$ for a~fixed $\phibf\in V$. With the density function $\rho(\phibf)=\phibf\cdot\phibf$, we consider 
\begin{equation}\label{eq:aphi}
  a_\phibf(\vbf,\wbf)
  = a_0(\vbf,\wbf) + \int_{\Omega} \gamma(\rho(\phibf))\, \vbf\cdot \wbf \dr
  = \sum_{j=1}^N \tilde a_\phibf(v_j, w_j) 
\end{equation}
for $\vbf,\wbf\in V$. Here, $a_0:V\times V\to\R$ is a bilinear form,  which is independent of~$\phibf$, and $\gamma:\R\to\R$ is a continuous nonlinear function with $\gamma(0)=0$. Later, $a_0$ and the term with~$\gamma$ will correspond, respectively, to the quadratic part and the nonlinear part of the energy. 
Note that~\eqref{eq:aphi} encodes a special structure, i.e., $a_\phibf$ can be written as a~sum with a~bilinear form~$\tilde a_\phibf\colon \tilde{V}\times \tilde{V}\to \R$. Within the abstract setting, we consider the following assumption. 
\begin{assumption}[Bilinear form~$\tilde{a}_\phibf$]
\label{ass:aPhi}
For a~fixed $\phibf\in V$, $\tilde{a}_\phibf$ from~\eqref{eq:aphi} is a~symmetric, bounded, and coercive bilinear form on~$\tilde{V}$. 
\end{assumption}
By equation~\eqref{eq:aphi}, the bilinear from~$a_\phibf$ inherits the inner product structure from $\tilde{a}_\phibf$, meaning that $a_\phibf$ is symmetric, bounded, and coercive on~$V$. Thus, it defines an~inner product on $V$ which induces the norm 
$$
\|\vbf\|_{a_\phibf}=\sqrt{a_\phibf(\vbf,\vbf)}, \qquad \vbf\in V.
$$ 
The assumed Gelfand structure implies the existence of a~constant $C_H>0$ such that \mbox{$\|\vbf \|_H \le C_H\, \|\vbf \|_{a_{0}}$}. Moreover, for a bounded~$\phibf\in V \cap [L^{\infty}(\Omega)]^N$, there exists a~constant \mbox{$c_E>0$} such that 
$$ 
c_E\,\|\vbf \|_{a_{\phibf}} \le\|\vbf \|_{a_{0}} \le \|\vbf \|_{a_{\phibf}}\qquad\text{ for all }\vbf\in V.
$$
 The corresponding operator formulation of the bilinear form~$a_\phibf$ reads
\begin{equation}\label{eq:Aphi}
   \langle \calA_{\phibf} \vbf, \wbf\rangle := a_\phibf(\vbf,\wbf) \qquad\text{ for all } \vbf,\wbf\in V, 
\end{equation}
with a linear operator $\calA_{\phibf}\colon V\to V^*$. Assumption~\ref{ass:aPhi} implies that $\calA_{\phibf}$ is symmetric, bounded, and coercive. Hence, it is invertible (for fixed~$\phibf$). Its inverse satisfies   
\begin{equation}\label{eq:invA}
  a_\phibf(\mathcal A_{\phibf}^{-1}\vbf,\wbf) 
  = (\vbf,\wbf)_H \qquad\text{ for all }\vbf,\wbf\in V.
\end{equation}
Next, we show some useful properties of the matrix $\out{\vbf}{\calA_\phibf^{-1}\vbf}$.
\begin{proposition}\label{prop:symmetr}
Let $\phibf, \vbf\in V$ and let $\calA_\phibf$ be defined as in \eqref{eq:Aphi}. Then, under Assumption~\textup{\ref{ass:aPhi}}, the matrix $\out{\vbf}{\calA_\phibf^{-1}\vbf}\in\mathbb{R}^{N\times N}$ is symmetric positive semidefinite. If, additionally, $\vbf\neq \bm{0}$ and its components are linearly independent, then $\out{\vbf}{\calA_\phibf^{-1}\vbf}$ is positive definite. 
\end{proposition}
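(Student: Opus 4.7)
The plan is to reduce the $N$-component situation to the scalar bilinear form $\tilde a_\phibf$ and then exploit that $\tilde a_\phibf$ defines an inner product on $\tilde V$.

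First I would unpack the defining identity \eqref{eq:invA} componentwise. Setting $\ubf := \calA_\phibf^{-1}\vbf$ and choosing test functions $\wbf \in V$ with only a single nonzero component, the decomposition $a_\phibf(\vbf,\wbf) = \sum_j \tilde a_\phibf(v_j,w_j)$ from~\eqref{eq:aphi} forces
\[
  \tilde a_\phibf(u_j, w) = (v_j, w)_{L^2(\Omega)} \qquad\text{for all } w \in \tilde V,\; j=1,\ldots,N,
\]
so $u_j = \tilde\calA_\phibf^{-1} v_j$ where $\tilde\calA_\phibf$ is the operator associated with $\tilde a_\phibf$.

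Next I would compute the entries of $M := \out{\vbf}{\calA_\phibf^{-1}\vbf}$. By the definition~\eqref{eq:outer} one has $M_{ij} = (v_i, u_j)_{L^2(\Omega)}$. Applying the defining property of $u_j$ with test function $w = u_i \in \tilde V$ yields $M_{ij} = \tilde a_\phibf(u_j, u_i)$, and symmetry of $\tilde a_\phibf$ (Assumption~\ref{ass:aPhi}) gives $M_{ij} = \tilde a_\phibf(u_i, u_j) = M_{ji}$. For any $x = (x_1,\ldots,x_N)^T \in \R^N$, bilinearity then produces
\[
  x^T M x \;=\; \sum_{i,j=1}^N x_i x_j\, \tilde a_\phibf(u_i, u_j) \;=\; \tilde a_\phibf\bigl(\ubf x,\, \ubf x\bigr) \;=\; \|\ubf x\|_{\tilde a_\phibf}^2 \;\geq\; 0,
\]
which establishes symmetric positive semidefiniteness by coercivity of $\tilde a_\phibf$.

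For the definiteness statement, suppose $\vbf \neq \bm{0}$ has linearly independent components and $x^T M x = 0$. Coercivity of $\tilde a_\phibf$ implies $\ubf x = \bm{0}$ in $\tilde V$. Testing with arbitrary $w \in \tilde V$ against $\tilde a_\phibf(\ubf x, w) = 0$ and using the defining property of each $u_i$ gives $(\vbf x, w)_{L^2(\Omega)} = 0$, so $\vbf x = 0$ in $L^2(\Omega)$. The linear independence assumption then forces $x = 0$, proving positive definiteness. I do not expect a serious obstacle here; the only delicate point is the clean componentwise reduction of \eqref{eq:invA} to $\tilde a_\phibf$, which must be stated carefully so that the subsequent matrix identifications are unambiguous.
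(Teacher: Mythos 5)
Your proof is correct and follows essentially the same route as the paper: both reduce $\calA_\phibf^{-1}$ to the componentwise operator $\tilde\calA_\phibf^{-1}$ via the additive structure~\eqref{eq:aphi}, identify $x^T \out{\vbf}{\calA_\phibf^{-1}\vbf}\, x$ with $\tilde a_\phibf\bigl(\tilde\calA_\phibf^{-1}(\vbf x), \tilde\calA_\phibf^{-1}(\vbf x)\bigr) \geq 0$, and use the linear independence of the components of $\vbf$ for definiteness. The only blemish is an index transposition in your symmetry step: the defining property of $u_j$ tested with $u_i$ gives $M_{ji} = \tilde a_\phibf(u_j, u_i)$, not $M_{ij}$, but the chain $M_{ij} = \tilde a_\phibf(u_i, u_j) = \tilde a_\phibf(u_j, u_i) = M_{ji}$ closes correctly once both defining properties and the symmetry of $\tilde a_\phibf$ are invoked.
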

\begin{proof}
Due to the additive structure of~\eqref{eq:aphi}, there exists a~symmetric and coercive operator~$\tilde{\calA}_\phibf\colon \tilde{V}\to \tilde{V}^*$ corresponding to the bilinear form~$\tilde a_\phibf$ such that 
$$
  \langle \calA_\phibf\vbf, \wbf\rangle 
  = \sum_{j=1}^N\, \langle \tilde{\calA}_\phibf v_j, w_j\rangle \qquad \text{ for all }
\vbf,\wbf\in V.
$$
Thus, we conclude that~$\calA_\phibf^{-1}\vbf = (\tilde{\calA}^{-1}_\phibf v_1, \dots, \tilde{\calA}^{-1}_\phibf v_N)\in V$. Moreover, since $\tilde \calA_\phibf$ is symmetric, so is its inverse, which implies 
\begin{align*}
  \big( \out{\vbf}{\calA_\phibf^{-1}\vbf} \big)_{ij}
   & = \big( v_i, (\calA_\phibf^{-1}\vbf)_j \big)_{L^2(\Omega)}
     = \big( v_i, \tilde\calA_\phibf^{-1} v_j \big)_{L^2(\Omega)} \\
   & = \big( v_j, \,\tilde\calA_\phibf^{-1} v_i \big)_{L^2(\Omega)}
	   = \big( v_j, (\calA_\phibf^{-1}\vbf)_i \big)_{L^2(\Omega)}
     = \big( \out{\vbf}{\calA_\phibf^{-1}\vbf} \big)_{ji}. 
\end{align*}
Further, for an arbitrary vector $x\in\R^N$, we get
\begin{align*}
  x^T \big( \out{\vbf}{\calA_\phibf^{-1}\vbf}\, x\big)
  & = \sum_{i=1}^N \sum_{j=1}^N \big( v_i, \tilde\calA_\phibf^{-1} v_j \big)_{L^2(\Omega)}\, x_i\, x_j
    = \big( \vbf x, \tilde\calA_\phibf^{-1} (\vbf x) \big)_{L^2(\Omega)} \\
  & = \tilde a_\phibf\big(\tilde\calA_\phibf^{-1} (\vbf x), \tilde\calA_\phibf^{-1} (\vbf x) \big) \geq 0.  
\end{align*}
This shows that $\out{\vbf}{\calA_\phibf^{-1}\vbf}$ is positive semidefinite. Finally, if $\vbf\neq 0$ has linearly independent components, then for all $x\in\R^N\setminus\{0\}$, we have~$\vbf x \neq 0$
and, hence, $\out{\vbf}{\calA_\phibf^{-1}\vbf}$ is positive definite.
\end{proof}
%
%
%
\subsection{Variational form and nonlinear eigenvector problem}\label{sect:generalModel:variationalForm}
Given an index $N\in\N$ and the space $V$, let
\[
\Stiefel
:= \big\{ \phibf\in V\enskip :\enskip \out{\phibf}{\phibf} = \Onebf \big\} 
\]
denote the infinite-dimensional {\em Stiefel manifold} of index $N$. Here, $\Onebf$ is the identity matrix in $\R^{N\times N}$. We will see in Section~\ref{sec:Stiefel} that $\Stiefel$ admits a~structure of an~embedded submanifold of the Hilbert space $V$. Such a~manifold was previously considered in \cite{Usch10,HarM12}. 

This paper is devoted to the abstract constrained energy minimization problem
\begin{equation}\label{eq:opt}
\min_{\phibf\in\Stiefel} \calE(\phibf) 
\end{equation}
with the~energy functional
\begin{equation}\label{eq:energy}
\calE(\phibf)
:= \frac12\, a_0(\phibf,\phibf) + \frac12\, \int_{\Omega} \Gamma(\rho(\phibf)) \dr, \qquad
\Gamma(\rho) 
= \int_0^\rho \gamma(t) \dt.
\end{equation}
Throughout the paper, we make the (physically meaningful) assumption that~$\calE$ is or\-tho\-gonally invariant in the sense that $\calE(\phibf\, Q)=\calE(\phibf)$ for any orthogonal matrix \mbox{$Q\in\mathbb{R}^{N\times N}$}. This means that the energy depends only on the space spanned by the components of $\phibf$ and not on a~particular choice of $\phibf$. This condition is fulfilled in the applications we are interested in, see Section~\ref{sect:examples}.

We are seeking critical points of the energy $\calE$ which represent low-energy states. The state of minimal energy, which is called the {\em ground state}, is of particular interest. 
Critical points of the energy subject to the constraint are characterized by a coupled system of nonlinear eigenvector problems associated with the bilinear form $a_\phibf$ introduced in \eqref{eq:aphi}. The connection follows from the observation that the directional derivative $\mathrm{D}\calE(\phibf)[\vbf]$ of $\calE$ at $\phibf$ along $\vbf$ is given by
\begin{align}
\label{eq:DerivativeE}
\mathrm{D}\calE(\phibf)[\vbf]
= a_\phibf(\phibf,\vbf) 
\qquad\text{for all } \vbf \in V.
\end{align}
The variational formulation of the nonlinear eigenvector problem then reads: seek \linebreak $\phibf\in \Stiefel$ and $N$ eigenvalues $\lambda_1,\dots,\lambda_N\in\R$ such that 
\begin{align}
\label{eq:EVPweakComponents}
\tilde a_\phibf(\phi_j, v_j) 
= \lambda_j\, (\phi_j, v_j)_{L^2(\Omega)}
\qquad\text{ for all }(v_1, \dots, v_N)\in V.
\end{align}
We emphasize that all these problems are coupled, since the bilinear form $\tilde a_\phibf$ contains the information on the entire $N$-frame~$\phibf$. 
%
%
\section{Geometry of the Infinite-Dimensional Stiefel Manifold}\label{sec:Stiefel}
In this section, we investigate the geometric structure of the Stiefel manifold $\Stiefel$. First, we state that $\Stiefel$ is an~embedded submanifold of the Hilbert space~$V$. This result can be proved analogously to the finite-dimensional case of the Stiefel matrix manifold; see~\cite[Sect.~3.3.2]{AbsiMS08}. 
\begin{proposition}\label{prop:Stiefel}
The Stiefel manifold $\,\Stiefel$ is a closed embedded submanifold of the Hilbert space $V$. It has co-dimension $N(N+1)/2$.
\end{proposition}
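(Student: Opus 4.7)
The plan is to realize $\Stiefel$ as the regular preimage of a smooth map into a finite-dimensional target, and then to invoke the submersion (preimage) theorem in the Hilbert space setting. The overall strategy is entirely parallel to the matrix case of the standard reference. Concretely, I would introduce the constraint map
\[
F\colon V\to\mathrm{Sym}(N), \qquad F(\phibf) := \out{\phibf}{\phibf} - \Onebf,
\]
where $\mathrm{Sym}(N)\subset\R^{N\times N}$ denotes the symmetric matrices. That $F$ really takes values in $\mathrm{Sym}(N)$ is immediate from the last identity of Lemma~\ref{lem:propOuterProduct}. Since $F$ is a quadratic (hence $C^\infty$) map between Banach spaces and $\Stiefel = F^{-1}(\bm{0})$, continuity of $F$ already yields closedness of $\Stiefel$ in $V$.

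Second, I would compute the Fréchet derivative of $F$. Expanding $F(\phibf + t\vbf)$ and using the transpose identity from Lemma~\ref{lem:propOuterProduct} gives
\[
\mathrm{D}F(\phibf)[\vbf] = \out{\phibf}{\vbf} + \out{\vbf}{\phibf} = \out{\phibf}{\vbf} + \out{\phibf}{\vbf}^T.
\]
The decisive step is then surjectivity of $\mathrm{D}F(\phibf)\colon V\to \mathrm{Sym}(N)$ at every $\phibf\in\Stiefel$. Given $S\in\mathrm{Sym}(N)$, I would use the preimage candidate $\vbf := \tfrac12\,\phibf\, S\in V$; the first identity of Lemma~\ref{lem:propOuterProduct} together with the constraint $\out{\phibf}{\phibf}=\Onebf$ yields $\out{\phibf}{\vbf} = \tfrac12 S$, whence $\mathrm{D}F(\phibf)[\vbf] = S$ by symmetry of $S$.

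Finally, to apply the submersion theorem I would verify that $\Kern\mathrm{D}F(\phibf)$ splits in $V$. This is automatic because the target $\mathrm{Sym}(N)$ has dimension $N(N+1)/2$: the kernel is closed of finite codimension in the Hilbert space $V$ and therefore possesses an orthogonal topological complement. The Hilbert-space version of the preimage theorem then delivers that $\Stiefel$ is a smooth embedded submanifold of $V$ whose codimension equals $\dim\mathrm{Sym}(N) = N(N+1)/2$.

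The only genuine subtlety, rather than a computational difficulty, is choosing the correct finite-dimensional target: taking the whole of $\R^{N\times N}$ instead of $\mathrm{Sym}(N)$ would render $F$ non-submersive and would produce the wrong codimension. Once the target is identified correctly, the surjectivity check via $\vbf = \tfrac12\phibf S$ is a one-liner, and everything else is bookkeeping inside the infinite-dimensional implicit function theorem.
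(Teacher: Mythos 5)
Your proposal is correct and follows exactly the route the paper intends: the paper's proof is just a citation to the finite-dimensional argument of \cite[Sect.~3.3.2]{AbsiMS08}, which realizes the Stiefel manifold as the preimage of the regular value $\bm{0}$ of $\phibf\mapsto\out{\phibf}{\phibf}-\Onebf$ into the symmetric matrices, with surjectivity of the derivative checked via the same candidate $\vbf=\tfrac12\phibf S$. Your write-up simply supplies the details the paper leaves implicit, including the genuinely infinite-dimensional points (automatic splitting of the kernel in a Hilbert space, the Banach-space submersion theorem), and these are handled correctly.
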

%
%

The {\em tangent space} of $\Stiefel$ at $\phibf\in \Stiefel$ is given by 
\begin{align*}
  T_\phibf\,\Stiefel
  := \big\{ \etabf\in V\enskip :\enskip \out{\etabf}{\phibf} + \out{\phibf}{\etabf} = \Nullbf \big\}. 
 \end{align*}
Hence, $T_\phibf\,\Stiefel$ contains all functions $\etabf\in V$ for which the matrix~$\out{\etabf}{\phibf}$ is skew-symmetric. 
%
%
\subsection{Hilbert metric and normal space}
The simplest Riemannian metric on the Stiefel manifold $\Stiefel$ is the {\em Hilbert metric} $g_H$ inherited from the ambient space $V\subset H$. It is given by 
$$
g_H(\etabf,\zetabf)
=(\etabf,\zetabf)_H
=\trace\, \out{\etabf}{\zetabf}
\qquad \text{ for all } \etabf,\zetabf\in T_\phibf\,\Stiefel. 
$$
This metric turns $\Stiefel$ into a~Riemannian submanifold of $V$.  
The {\em normal space at $\phibf\in\Stiefel$ with respect to $g_H$} is then defined as 
$$
\big(T_\phibf\,\Stiefel\big)_H^\perp
=\bigl\{ \zbf\in V \enskip :\enskip g_H(\zbf,\etabf)=0 \;\text{ for all } \etabf\in T_\phibf\,\Stiefel \bigr\}.
$$
The following proposition gives an~explicit characterization of this space. Its proof is similar to the finite-dimensional setting, which can be found in~\cite[Sect.~2.2.1]{EdeAS98}. 
\begin{proposition}
The normal space $(T_\phibf\,\Stiefel)_H^\perp$ at $\phibf\in\Stiefel$ is given by 
\begin{equation}\label{eq:normal}
\big(T_\phibf\,\Stiefel\big)_H^\perp= \bigl\{ \phibf\,S \in V \enskip :\enskip S\in\calS_{\rm sym}(N) \bigr\},
\end{equation}
where $\calS_{\rm sym}(N)$ denotes the set of all real symmetric $N\times N$ matrices.
\end{proposition}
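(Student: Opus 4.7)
The plan is to prove both inclusions in the standard way, using the algebraic identities of Lemma~\ref{lem:propOuterProduct} as the main tool and mimicking the finite-dimensional argument from~\cite[Sect.~2.2.1]{EdeAS98}, taking care only to verify that the dimensional splitting argument still works in this Hilbert-space setting.

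For the inclusion $\supseteq$, I take $\zbf=\phibf S$ with $S\in\calS_{\rm sym}(N)$ and any $\etabf\in T_\phibf\,\Stiefel$. Using Lemma~\ref{lem:propOuterProduct}, I compute
\[
 g_H(\zbf,\etabf) = \trace\,\out{\phibf S}{\etabf}
   = \trace\bigl(S^T\out{\phibf}{\etabf}\bigr)
   = \trace\bigl(S\,\out{\phibf}{\etabf}\bigr).
\]
Since $\etabf\in T_\phibf\Stiefel$, the matrix $\out{\phibf}{\etabf}$ is skew-symmetric (being the negative transpose of $\out{\etabf}{\phibf}$). The trace of a product of a symmetric and a skew-symmetric matrix vanishes, and the inclusion follows.

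For the inclusion $\subseteq$, I take an arbitrary $\zbf\in(T_\phibf\,\Stiefel)_H^\perp$ and set $S:=\out{\phibf}{\zbf}$, aiming to show both $\zbf=\phibf S$ and that $S$ is symmetric. The key step is the decomposition $\zbf = \etabf_0 + \phibf S$ with $\etabf_0:=\zbf-\phibf S$. Applying Lemma~\ref{lem:propOuterProduct} together with $\out{\phibf}{\phibf}=\Onebf$ yields
\[
 \out{\etabf_0}{\phibf} = \out{\zbf}{\phibf} - S^T\,\out{\phibf}{\phibf}
   = \out{\zbf}{\phibf} - \out{\phibf}{\zbf}^T = 0,
\]
so in particular $\etabf_0\in T_\phibf\Stiefel$. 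A second application of the same identities gives $(\phibf S,\etabf_0)_H = \trace\bigl(S^T\out{\phibf}{\etabf_0}\bigr)=0$, whence $(\zbf,\etabf_0)_H=\|\etabf_0\|_H^2$. Because $\zbf$ is normal and $\etabf_0$ is tangent, the left side vanishes, forcing $\etabf_0=0$ and thus $\zbf=\phibf S$.

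It remains to show $S\in\calS_{\rm sym}(N)$, which is the step I expect to be the most delicate, since it is not automatic from the construction of $S$. For this I test $\zbf$ against the special tangent vectors $\phibf K$, where $K\in\R^{N\times N}$ is skew-symmetric; that $\phibf K\in T_\phibf\Stiefel$ follows again from Lemma~\ref{lem:propOuterProduct} and $\out{\phibf}{\phibf}=\Onebf$. Orthogonality then yields
\[
 0 = (\zbf,\phibf K)_H = (\phibf S,\phibf K)_H = \trace\bigl(S^T K\bigr)
\]
for every skew-symmetric $K$. Since the orthogonal complement of the skew-symmetric matrices with respect to the trace inner product is precisely $\calS_{\rm sym}(N)$, we conclude that $S^T$, and hence $S$, is symmetric. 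The main (mild) obstacle throughout is bookkeeping with the row-vector notation, so that the transpose rules of Lemma~\ref{lem:propOuterProduct} are applied on the correct side at each step.
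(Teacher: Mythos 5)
Your proof is correct, and it supplies in full the argument that the paper itself omits: the paper only remarks that the proof is \emph{similar to the finite-dimensional setting} and cites \cite[Sect.~2.2.1]{EdeAS98} without writing anything down. Your two inclusions are exactly the right adaptation. The $\supseteq$ direction (symmetric times skew-symmetric has zero trace) is the standard computation. For $\subseteq$, the finite-dimensional treatments typically close the argument with a dimension count ($\{\phibf S\}$ has dimension $N(N+1)/2$, matching the codimension of the tangent space); you instead argue directly, which is precisely what one should do when the ambient space $V$ is infinite-dimensional: you decompose $\zbf=\etabf_0+\phibf S$ with $S=\out{\phibf}{\zbf}$, verify $\etabf_0\in T_\phibf\,\Stiefel$ via Lemma~\ref{lem:propOuterProduct}, and force $\etabf_0=0$ by testing $\zbf$ against $\etabf_0$ itself; symmetry of $S$ then follows by testing against the tangent vectors $\phibf K$ with $K$ skew-symmetric and using that $\calS_{\rm sym}(N)$ is the trace-orthogonal complement of the skew-symmetric matrices. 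One tiny gap in exposition, harmless: after computing $\out{\etabf_0}{\phibf}=0$ you should note that $\out{\phibf}{\etabf_0}=\out{\etabf_0}{\phibf}^T=0$ as well, so that the defining condition $\out{\etabf_0}{\phibf}+\out{\phibf}{\etabf_0}=\Nullbf$ of the tangent space is indeed met; this is immediate from the transpose identity in Lemma~\ref{lem:propOuterProduct}, and you in fact use $\out{\phibf}{\etabf_0}=0$ one line later.
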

%
%
We now introduce an~$H$-orthonormal basis of $(T_\phibf\,\Stiefel)_H^\perp$. Let $S^{ij} \in \mathcal{S}_{\rm sym}(N)$ denote the (normalized) symmetric matrix which has a~non-zero entry at positions $(i,j)$ and $(j,i)$ and a zero otherwise. More precisely, we have 
\begin{equation}\label{eq:Sij}
\arraycolsep=2pt
\begin{array}{rcll}
  S^{ii} & = & e_i^{}\, e_i^T, &\qquad 1\leq i\leq N,\\[0.3em]
  S^{ij} & = & \tfrac{1}{\sqrt 2} \big(e_i^{}\, e_j^T + e_j^{}\, e_i^T\big), &\qquad 1\leq i<j\leq N,
\end{array}
\end{equation}
where $e_j$ denotes the $j$th column of $\Onebf$. 
Note that these matrices form a~basis of $\mathcal{S}_{\rm sym}(N)$. For $1\leq i\le j\leq N$, we define the functions $\phibf^{ij} := \phibf\, S^{ij} \in (T_\phibf\,\Stiefel)_H^\perp$. This means that
\begin{equation} \label{eq:phi_ij}
\arraycolsep=2pt
\begin{array}{rcll}
	\phibf^{ii} &=& (0, \dots, 0, \phi_i, 0, \dots, 0), &\quad 1\leq i\leq N,\\[0.3em]
	\phibf^{ij} &=& \tfrac{1}{\sqrt 2}\, (0, \dots, 0, \phi_j, 0, \dots, 0, \phi_i, \dots, 0),  &\quad 1\leq i<j\leq N,
\end{array}
\end{equation}
where $\phi_j$ (the $j$th component of $\phibf$) is placed at the $i$th position and~$\phi_i$ at the $j$th position. Properties of these functions are summarized in the following proposition. 
\begin{proposition}
\label{prop:Phihat}
Let $\phibf\in \Stiefel$. Then the functions $\phibf^{ij}$, $1\leq i\le j\le N$, introduced in~\eqref{eq:phi_ij} form an~$H$-orthonormal basis of $(T_\phibf\,\Stiefel)_H^\perp$. 
\end{proposition}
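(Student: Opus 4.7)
The plan is to exploit the characterization of the normal space in \eqref{eq:normal} as the image of $\calS_{\rm sym}(N)$ under the linear map $S \mapsto \phibf S$, and to transfer orthonormality questions for the $\phibf^{ij}$ to the familiar Frobenius inner product on symmetric matrices via Lemma~\ref{lem:propOuterProduct}. Since the matrices $S^{ij}$ defined in~\eqref{eq:Sij} form a basis of $\calS_{\rm sym}(N)$, the functions $\phibf^{ij} = \phibf S^{ij}$ automatically span $(T_\phibf\,\Stiefel)_H^\perp$; the real content is the $H$-orthonormality, from which linear independence (and thus the basis property) follows for free.

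For the central computation, I would apply Lemma~\ref{lem:propOuterProduct} twice together with the constraint $\out{\phibf}{\phibf}=\Onebf$ and the symmetry of the $S^{ij}$ to obtain
\begin{equation*}
  \out{\phibf^{ij}}{\phibf^{kl}}
  = \out{\phibf S^{ij}}{\phibf S^{kl}}
  = (S^{ij})^T \out{\phibf}{\phibf}\, S^{kl}
  = S^{ij} S^{kl}.
\end{equation*}
Taking the trace and using~\eqref{eq:inner} then reduces the claim to verifying
\begin{equation*}
  (\phibf^{ij},\phibf^{kl})_H = \trace\bigl( S^{ij} S^{kl}\bigr) = \delta_{ik}\,\delta_{jl}
  \qquad\text{for } 1\le i\le j\le N,\ 1\le k\le l\le N.
\end{equation*}
This is a short rank-one computation splitting into the three cases $(i=j,\,k=l)$, exactly one pair off-diagonal, and both off-diagonal, using the elementary identity $\trace(e_a e_b^T e_c e_d^T) = \delta_{bc}\delta_{ad}$ and the normalization factors $1/\sqrt{2}$ in~\eqref{eq:Sij}. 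The constraints $i\le j$ and $k\le l$ eliminate the cross-term that would otherwise spoil the diagonalization.

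For the spanning property, I would simply observe that any element of $(T_\phibf\,\Stiefel)_H^\perp$ is of the form $\phibf S$ with $S\in\calS_{\rm sym}(N)$ by~\eqref{eq:normal}, and expanding $S = \sum_{i\le j}\alpha_{ij}\,S^{ij}$ in the basis of symmetric matrices immediately yields $\phibf S = \sum_{i\le j}\alpha_{ij}\,\phibf^{ij}$ by linearity. Combined with the orthonormality above, this gives the basis property. I expect no serious obstacle: the potentially delicate point is only bookkeeping the normalization factor $1/\sqrt 2$ in~\eqref{eq:Sij} so that the off-diagonal $\phibf^{ij}$ come out with unit $H$-norm, and checking that the restriction $i\le j$ (rather than $i,j$ ranging independently) is exactly what makes the trace computation produce Kronecker deltas without an extra factor of two.
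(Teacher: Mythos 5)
Your proof is correct. It differs from the paper's in one organizational respect: the paper verifies orthonormality by computing the $L^2$ inner products of the components of the $\phibf^{ij}$ directly, case by case, whereas you first invoke Lemma~\ref{lem:propOuterProduct} together with $\out{\phibf}{\phibf}=\Onebf$ and the symmetry of the $S^{ij}$ to get $\out{\phibf S^{ij}}{\phibf S^{kl}} = S^{ij}S^{kl}$, thereby reducing the whole question to the Frobenius orthonormality $\trace\bigl(S^{ij}S^{kl}\bigr)=\delta_{ik}\delta_{jl}$ of the matrices in \eqref{eq:Sij}. In effect, you isolate the structural fact that $S\mapsto\phibf S$ is an isometry from $\calS_{\rm sym}(N)$ with the Frobenius inner product onto $(T_\phibf\,\Stiefel)_H^\perp$ whenever $\phibf\in\Stiefel$; this is purely matrix-algebraic, and the resulting case analysis (including the role of $i\le j$, $k\le l$ in killing the cross term $\delta_{il}\delta_{jk}$) is identical in content to the paper's computations, which produce exactly the same Kronecker-delta expressions. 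You also make explicit the spanning step via the characterization \eqref{eq:normal} and the basis property of the $S^{ij}$, which the paper dismisses as obvious; note that this makes your argument (like the paper's implicit one) dependent on \eqref{eq:normal}, but that proposition precedes the statement, so the dependence is legitimate. The two proofs are of comparable length; yours buys a cleaner, reusable identity at no real extra cost.
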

\begin{proof}
First, we show the $H$-orthonormality of the functions $\phibf^{ij}$. Since~$\phibf\in \Stiefel$, we obtain 
\[
	(\phibf^{ii}, \phibf^{\ell \ell})_H
	= \sum_{m=1}^N (\phi^{ii}_m, \phi^{\ell \ell}_m)_{L^2(\Omega)} 
	= \sum_{m=1}^N \delta_{im}\delta_{\ell m}\, (\phi_i, \phi_\ell)_{L^2(\Omega)} 
	= \delta_{i\ell}.
\] 
For $k<\ell$, we have 
\[
	(\phibf^{ii}, \phibf^{k\ell})_H
	= (\phi_i, \phi^{k\ell}_i)_{L^2(\Omega)} 
	= \tfrac{1}{\sqrt 2}\, \delta_{ik} (\phi_i, \phi_\ell)_{L^2(\Omega)} 
	+ \tfrac{1}{\sqrt 2}\, \delta_{i\ell} (\phi_i, \phi_k)_{L^2(\Omega)} 
	= \sqrt 2\, \delta_{ik} \delta_{i\ell}
	= 0.
\]
Finally, for $i<j$ and $k<\ell$, which implies $\delta_{i\ell}\delta_{j k}=0$, we derive 
\[
	(\phibf^{ij}, \phibf^{k\ell})_H
	= \sum_{m=1}^N (\phi^{ij}_m, \phi^{k\ell}_m)_{L^2(\Omega)} 
	= \frac12\, \Big( \delta_{i k}\delta_{j\ell} 
	+ \delta_{j\ell}\delta_{i k} \Big)
	= \delta_{i k}\delta_{j\ell}. 
\] 
Obviously, the functions $\phibf^{ij}$, $1\leq i\leq j\leq N$, span $(T_\phibf\,\Stiefel)_H^\perp$ and, hence, they form an~$H$-orthonormal basis of $(T_\phibf\,\Stiefel)_H^\perp$.
\end{proof}
%
%
%
\subsection{The \texorpdfstring{$a_\phibf$}{}-metric, normal space, and \texorpdfstring{$a_\phibf$}{}-orthogonal projection}
An~alternative Riemannian metric on the Stiefel manifold $\Stiefel$ can be defined by using the inner product $a_\phibf(\,\cdot\,,\cdot\,)$ introduced in \eqref{eq:aphi} as 
$$
g_a(\etabf,\zetabf)=a_\phibf(\etabf,\zetabf)\qquad \text{ for all } \etabf,\zetabf\in T_\phibf\,\Stiefel.
$$
Then the {\em normal space at $\phibf\in\Stiefel$ with respect to $g_a$} is defined as 
$$
(T_\phibf\,\Stiefel)_{a}^\perp=\bigl\{ \zbf\in V \enskip :\enskip g_a(\zbf,\etabf)=0 \;\text{ for all } \etabf\in T_\phibf\,\Stiefel \bigr\}.
$$
Our goal is now to construct a~basis of $(T_\phibf\,\Stiefel)_{a}^\perp$. To this end, we 
introduce the functions~$\psibf^{k\ell} \in V$ for~$1\leq k\le \ell\le N$ as solutions to
\begin{subequations}
\label{property:Psi}
\begin{align}
	a_\phibf(\psibf^{k\ell}, \etabf) 
	&= 0 \hspace{2.8em}\text{ for all } \etabf\in T_\phibf\,\Stiefel, \label{property:Psi:a} \\
	(\psibf^{k\ell}, \phibf^{ij})_H 
	&= \delta_{ik}\delta_{j\ell} \quad\text{ for } 1\le i\le j\le N, \label{property:Psi:b}
\end{align}
\end{subequations}
where $\phibf^{ij}$ are defined in~\eqref{eq:phi_ij}. The following proposition establishes the well-posedness of these problems. 
\begin{proposition}
\label{prop:existencePsi}
There exist unique functions~$\psibf^{k\ell} \in V$, $1\le k\le \ell\le N$, satisfying~\eqref{property:Psi}. 
\end{proposition}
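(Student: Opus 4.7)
\medskip

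\noindent\textbf{Plan of proof.} The strategy is to reduce the infinite-dimensional problem \eqref{property:Psi} to a finite-dimensional linear algebra problem on $\calS_{\rm sym}(N)$ by means of the ansatz
\[
  \psibf^{k\ell} = \calA_\phibf^{-1}(\phibf\, T^{k\ell}), \qquad T^{k\ell}\in\calS_{\rm sym}(N),
\]
with an unknown symmetric matrix $T^{k\ell}$. First I would verify that condition~\eqref{property:Psi:a} is automatic for such an ansatz: for $\etabf\in T_\phibf\Stiefel$ one has, by \eqref{eq:invA} and Lemma~\ref{lem:propOuterProduct},
\[
  a_\phibf\big(\calA_\phibf^{-1}(\phibf T^{k\ell}),\etabf\big)
  = (\phibf T^{k\ell},\etabf)_H
  = \trace\bigl(\out{\etabf}{\phibf}\,T^{k\ell}\bigr) = 0,
\]
since $\out{\etabf}{\phibf}$ is skew-symmetric by the very definition of $T_\phibf\Stiefel$ and $T^{k\ell}$ is symmetric.

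Next I would rewrite \eqref{property:Psi:b} for the ansatz. Exploiting the componentwise action of $\calA_\phibf^{-1}$ (inherited from the additive structure of $\tilde a_\phibf$) gives $\calA_\phibf^{-1}(\phibf T)=(\calA_\phibf^{-1}\phibf)\,T$, and then Lemma~\ref{lem:propOuterProduct} together with the symmetry statement of Proposition~\ref{prop:symmetr} yields
\[
  (\psibf^{k\ell},\phibf\,S^{ij})_H
  = \trace\bigl(T^{k\ell}\,M_\phibf\,S^{ij}\bigr),
  \qquad M_\phibf := \out{\phibf}{\calA_\phibf^{-1}\phibf}.
\]
Since $\{S^{ij}\}_{i\le j}$ is an orthonormal basis of $\calS_{\rm sym}(N)$ with respect to the Frobenius inner product, \eqref{property:Psi:b} is equivalent to $\sym(M_\phibf T^{k\ell})=S^{k\ell}$, i.e., to the Lyapunov equation
\[
  M_\phibf\, T^{k\ell} + T^{k\ell}\, M_\phibf \;=\; 2\,S^{k\ell}.
\]
Because $\phibf\in\Stiefel$ has $H$-orthonormal, hence linearly independent, components, Proposition~\ref{prop:symmetr} guarantees that $M_\phibf$ is symmetric positive definite. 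Standard Lyapunov theory then provides a unique symmetric solution $T^{k\ell}$, producing the desired $\psibf^{k\ell}$.

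For uniqueness in $V$, I would argue that the ansatz already exhausts the space of solutions of~\eqref{property:Psi:a}. Let $\Phi\colon\calS_{\rm sym}(N)\to V$, $T\mapsto \calA_\phibf^{-1}(\phibf T)$. The step above shows $\range\Phi\subseteq (T_\phibf\Stiefel)_a^\perp$, and $\Phi$ is injective since $\phibf T=0$ forces $T=0$ by linear independence of the components of $\phibf$. Hence $\dim\range\Phi = N(N+1)/2$. On the other hand, $T_\phibf\Stiefel$ is closed in $V$ (it is the kernel of a continuous map into $\R^{N\times N}$) and has codimension $N(N+1)/2$ by Proposition~\ref{prop:Stiefel}, so the $a_\phibf$-orthogonal decomposition $V=T_\phibf\Stiefel\oplus (T_\phibf\Stiefel)_a^\perp$ forces $\dim(T_\phibf\Stiefel)_a^\perp = N(N+1)/2$. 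Comparing dimensions gives $\range\Phi=(T_\phibf\Stiefel)_a^\perp$. Thus any $\psibf\in V$ satisfying \eqref{property:Psi:a} is of the form $\calA_\phibf^{-1}(\phibf T)$, and \eqref{property:Psi:b} determines $T$ uniquely via the Lyapunov equation above.

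The main obstacle I anticipate is the bookkeeping needed to turn \eqref{property:Psi:b} into the clean matrix equation on $\calS_{\rm sym}(N)$: one has to chase the ansatz through the outer-product identities of Lemma~\ref{lem:propOuterProduct} and the defining relation \eqref{eq:invA} of $\calA_\phibf^{-1}$, and in particular to recognise that, since the test matrices $S^{ij}$ are symmetric, only the symmetric part of $M_\phibf T^{k\ell}$ is constrained, which is precisely what produces the Lyapunov (and not the simpler linear) equation.
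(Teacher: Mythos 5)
Your proof is correct, but it takes a genuinely different route from the paper's. The paper argues abstractly: it rewrites \eqref{property:Psi} as a saddle point problem with $N(N+1)/2$ Lagrange multipliers, notes that $a_\phibf$ is coercive (Assumption~\ref{ass:aPhi}) and that the inf-sup condition holds because the finitely many constraint functions $\phibf^{ij}$ are linearly independent, and then invokes standard saddle-point theory \cite[Ch.~III.4]{Bra07} for existence and uniqueness, recovering \eqref{property:Psi:a} at the end from the $H$-orthogonality of the $\phibf^{ij}$ to the tangent space. You instead construct the solution explicitly via the ansatz $\psibf^{k\ell}=\calA_\phibf^{-1}(\phibf\,T^{k\ell})$, reduce \eqref{property:Psi:b} to the Lyapunov equation $M_\phibf T^{k\ell}+T^{k\ell}M_\phibf=2S^{k\ell}$ with $M_\phibf=\out{\phibf}{\calA_\phibf^{-1}\phibf}$ symmetric positive definite (Proposition~\ref{prop:symmetr}, using that the components of $\phibf\in\Stiefel$ are $H$-orthonormal and hence linearly independent), and settle uniqueness by a dimension count: the ansatz map $\Phi$ is injective with range inside $(T_\phibf\,\Stiefel)_a^\perp$, whose dimension equals the codimension $N(N+1)/2$ of the closed subspace $T_\phibf\,\Stiefel$ via the $a_\phibf$-orthogonal decomposition of $V$, so the ansatz exhausts all solutions of \eqref{property:Psi:a}. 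Both arguments are sound, and your dimension claim is correctly sourced from Proposition~\ref{prop:Stiefel} rather than from later results, so there is no circularity. What each approach buys: the paper's is shorter given the cited theory and requires no computation; yours is self-contained (modulo finite-dimensional Lyapunov theory), constructive, and yields an explicit formula for each $\psibf^{k\ell}$ --- in effect you anticipate the paper's own Proposition~\ref{prop:psi}, which derives exactly this type of Lyapunov equation for the sum $\psibf=\sum_{k}\psibf^{kk}$, so your argument essentially merges Proposition~\ref{prop:existencePsi} and Proposition~\ref{prop:psi} into a single constructive step.
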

\begin{proof}
Let the indices $1\le k\le \ell\le N$ be arbitrary but fixed. We can write~\eqref{property:Psi} as a~saddle point problem. Hence, we seek for $\psibf^{k\ell} \in V$ and Lagrange multipliers $\mu^{ij}\in \R$, $1\le i\le j\le N$, such that 
\begin{align*}
	a_\phibf(\psibf^{k\ell}, \vbf) + \sum_{i\le j} (\phibf^{ij}, \vbf)_H \mu^{ij}
	&= 0 \hspace{2.8em}\text{ for all } \vbf\in V, \\
	(\psibf^{k\ell}, \phibf^{ij})_H \hspace*{25mm}
	&= \delta_{ik}\delta_{j\ell} \quad\text{ for } 1\le i\le j\le N.
\end{align*}
By Assumption~\ref{ass:aPhi}, the bilinear form $a_\phibf$ is coercive. Moreover, the number of constraints equals~$N(N+1)/2$ and is, hence, finite. In this case, the corresponding inf-sup stability follows from the linear independence of the functions~$\phibf^{ij}$.     
As a result, \cite[Ch.~III.4]{Bra07} implies the existence of a unique solution $\psibf^{k\ell}\in V$. Note that~$\psibf^{k\ell}$ satisfies~\eqref{property:Psi:a}, since by Proposition~\ref{prop:Phihat}, we have $(\phibf^{ij}, \etabf)_H = 0$ for all $\etabf\in T_\phibf\,\Stiefel$. 
\end{proof}

Next, we characterize the normal space $(T_\phibf\,\Stiefel)_a^\perp$ by providing a~basis of it.

\begin{proposition}
Let $\phibf\in \Stiefel$. Then the functions $\psibf^{k\ell}\in V$, $1\leq k\le \ell\le N$, satisfying~\eqref{property:Psi} form 
a~basis of the normal space $(T_\phibf\,\Stiefel)_a^\perp$. 
\end{proposition}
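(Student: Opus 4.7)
The plan is to establish two things: that each $\psibf^{k\ell}$ actually lies in $(T_\phibf\,\Stiefel)_a^\perp$, that the collection $\{\psibf^{k\ell}\}_{1\le k\le \ell\le N}$ is linearly independent, and that it exhausts the normal space (a dimension-count argument). Membership is immediate from the defining condition \eqref{property:Psi:a} of the $\psibf^{k\ell}$, so there is nothing to do there.

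For linear independence, I would suppose that $\sum_{k\le \ell} c_{k\ell}\, \psibf^{k\ell} = \bm{0}$ for some scalars $c_{k\ell}\in\R$, and then test this relation in the $H$-inner product against $\phibf^{ij}$ for arbitrary $1\le i\le j\le N$. Invoking the bi-orthogonality condition~\eqref{property:Psi:b} collapses the sum to the single term $c_{ij}$, giving $c_{ij}=0$. Thus the $N(N+1)/2$ functions $\psibf^{k\ell}$ are linearly independent in $V$.

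It remains to show they span $(T_\phibf\,\Stiefel)_a^\perp$, which I would do by comparing dimensions. By Assumption~\ref{ass:aPhi}, the norm $\|\cdot\|_{a_\phibf}$ is equivalent to the $V$-norm, so $(V,a_\phibf)$ is a Hilbert space, and the tangent space $T_\phibf\,\Stiefel$ is a closed subspace of $V$ (as the kernel of the continuous map $\vbf\mapsto \out{\vbf}{\phibf}+\out{\phibf}{\vbf}$). Therefore, the standard orthogonal-decomposition theorem yields $V=T_\phibf\,\Stiefel\oplus (T_\phibf\,\Stiefel)_a^\perp$ with respect to $a_\phibf$. By Proposition~\ref{prop:Stiefel}, the Stiefel manifold has co-dimension $N(N+1)/2$, so $\dim (T_\phibf\,\Stiefel)_a^\perp = N(N+1)/2$. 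Since we already exhibited $N(N+1)/2$ linearly independent elements inside this space, they form a basis.

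The only subtle point I anticipate is the dimension step — concretely, the identification $\mathrm{codim}\, T_\phibf\,\Stiefel = N(N+1)/2$ in $V$, which was asserted at the manifold level in Proposition~\ref{prop:Stiefel}. To make this rigorous one can simply note that the $H$-orthonormal family $\{\phibf^{ij}\}_{1\le i\le j\le N}$ from Proposition~\ref{prop:Phihat} is a basis of $(T_\phibf\,\Stiefel)_H^\perp$, and that $V = T_\phibf\,\Stiefel\oplus (T_\phibf\,\Stiefel)_H^\perp$ fixes the co-dimension independently of the chosen metric. Everything else is routine.
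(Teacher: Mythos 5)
Your proof is correct and follows essentially the same route as the paper's: membership from~\eqref{property:Psi:a}, linear independence from the bi-orthogonality~\eqref{property:Psi:b}, and a dimension count identifying $\dim (T_\phibf\,\Stiefel)_a^\perp = N(N+1)/2$. The paper states these three steps in a single short paragraph; you merely fill in the details (notably the orthogonal-decomposition justification of the dimension step) that the paper leaves implicit.
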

\begin{proof}
It follows from \eqref{property:Psi:a} that $\psibf^{k\ell}\in (T_\phibf\,\Stiefel)_a^\perp$. Further, \eqref{property:Psi:b} implies that these functions are linearly independent. Taking into account that $(T_\phibf\,\Stiefel)_a^\perp$ has dimension $N(N+1)/2$, we obtain the result.
\end{proof}

Any element $\vbf\in V$ can be uniquely decomposed as $\vbf=P_\phibf^{}(\vbf)+P_\phibf^\perp(\vbf)$, where $P_\phibf^{}$ and $P_\phibf^\perp$ denote the $a_\phibf$-orthogonal projections onto $T_\phibf\,\Stiefel$ and $(T_\phibf\,\Stiefel)_a^\perp$, respectively. The projection operator $P_\phibf$ satisfies the conditions $P_\phibf\circ P_\phibf=P_\phibf$ and
\begin{subequations}
\label{property:Pphi}
\begin{align}
  \out{P_\phibf(\vbf)}{\phibf} + \out{\phibf}{P_\phibf(\vbf)} 
  &= \Nullbf, 
  \label{property:projection1}\\
  a_\phibf(\vbf - P_\phibf(\vbf), \etabf) 
  &= 0 \quad\qquad\text{for all } \etabf\in T_\phibf\,\Stiefel.
  \label{property:projection2}
\end{align}
\end{subequations}
Note that \eqref{property:Pphi} implies that $\mathrm{range}\,P_\phibf =T_\phibf\,\Stiefel$ and $\mathrm{ker}\,P_\phibf = (T_\phibf\,\Stiefel)_a^\perp$.
For the construction of such an~operator, we use the basis functions $\psibf^{k\ell}$. 
It turns out that~for any $\vbf\in V$, $P_\phibf(\vbf)$ can be written as  
\begin{align}
P_\phibf(\vbf) 
&= \vbf - \sum_{k\le \ell} (\vbf, \phibf^{k\ell})_H\, \psibf^{k\ell} \label{eq:P_phi}\\
&= \vbf - \sum_{k=1}^N (v_k, \phi_k)_{L^2(\Omega)}\, \psibf^{kk} 
- \frac{1}{\sqrt 2} \sum_{k<\ell} \Big[ (v_k, \phi_\ell)_{L^2(\Omega)} + (v_\ell, \phi_k)_{L^2(\Omega)} \Big] \, \psibf^{k\ell}.
\nonumber
\end{align}
The following result shows that this operator indeed satisfies the requested conditions
and, hence, equals the $a_\phibf$-orthogonal projection onto $T_\phibf\,\Stiefel$.
\begin{proposition}
For~$\phibf \in \Stiefel$, the operator $P_\phibf$ from~\eqref{eq:P_phi} is the $a_\phibf$-orthogonal projection onto $T_\phibf\,\Stiefel$. 
\end{proposition}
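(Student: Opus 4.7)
The plan is to verify the two characterizing conditions \eqref{property:projection1} and \eqref{property:projection2} for the operator $P_\phibf$ defined by \eqref{eq:P_phi}; idempotency $P_\phibf\circ P_\phibf = P_\phibf$ then follows automatically, or can be re-derived by the same bookkeeping used below.

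Condition \eqref{property:projection2} is essentially built into the definition. Observing that
\[
\vbf - P_\phibf(\vbf) = \sum_{k\le\ell} (\vbf,\phibf^{k\ell})_H\, \psibf^{k\ell}
\]
is a finite linear combination of basis vectors of the normal space $(T_\phibf\,\Stiefel)_a^\perp$, each of which is $a_\phibf$-orthogonal to every tangent vector by \eqref{property:Psi:a}, bilinearity of $a_\phibf$ immediately yields $a_\phibf(\vbf - P_\phibf(\vbf),\etabf)=0$ for every $\etabf\in T_\phibf\,\Stiefel$.

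For condition \eqref{property:projection1}, I first compute the $H$-pairings $(P_\phibf(\vbf),\phibf^{ij})_H$ for $1\le i\le j\le N$. Plugging in \eqref{eq:P_phi} and invoking the biorthogonality relation \eqref{property:Psi:b}, the sum telescopes to $(\vbf,\phibf^{ij})_H - (\vbf,\phibf^{ij})_H = 0$. The remaining — and main — step is to translate these $H$-orthogonality identities into the skew-symmetric matrix condition defining $T_\phibf\,\Stiefel$. I would do this entry by entry from the explicit form~\eqref{eq:phi_ij}: setting $\wbf := P_\phibf(\vbf)$, the matrix $M := \out{\wbf}{\phibf}+\out{\phibf}{\wbf}$ is symmetric by Lemma~\ref{lem:propOuterProduct}; its diagonal entries satisfy $M_{ii} = 2(\wbf,\phibf^{ii})_H$, and its upper-triangular entries for $i<j$ satisfy $M_{ij} = \sqrt 2\,(\wbf,\phibf^{ij})_H$. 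All of these vanish by the previous computation, hence $M=\Nullbf$ and $P_\phibf(\vbf)\in T_\phibf\,\Stiefel$. The only technicality is this final entry-wise matching between an $H$-orthogonality statement and a matrix identity, and it is determined purely by the structure \eqref{eq:phi_ij} of the basis, making the verification routine rather than delicate.
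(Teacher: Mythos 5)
Your proof is correct and follows essentially the same route as the paper: both verify the two characterizing conditions \eqref{property:Pphi}, using \eqref{property:Psi:a} for the $a_\phibf$-orthogonality of $\vbf-P_\phibf(\vbf)$ and the biorthogonality \eqref{property:Psi:b} together with the explicit structure \eqref{eq:phi_ij} for the tangency condition. The only difference is organizational: you factor the tangency check through the intermediate identity $(P_\phibf(\vbf),\phibf^{ij})_H=0$ and then match the entries of $\out{P_\phibf(\vbf)}{\phibf}+\out{\phibf}{P_\phibf(\vbf)}$ against these pairings, whereas the paper performs the same Kronecker-delta bookkeeping directly inside the matrix entries; the mathematical content is identical.
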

\begin{proof}
First, we emphasize that $P_\phibf$ in \eqref{eq:P_phi} is a projection, since by Proposition~\ref{prop:Phihat} all summands $(\vbf, \phibf^{k\ell})_H$ vanish if $\vbf$ is already an element of $T_\phibf\,\Stiefel$. 

Next, we verify condition~\eqref{property:projection1}, which means that $P_\phibf$ maps $V$ into $T_\phibf\,\Stiefel$. Note that for $k<\ell$, we obtain from \eqref{property:Psi:b} that
\[
	(\psi^{k\ell}_i, \phi_i^{})_{L^2(\Omega)}
	= (\psibf^{k\ell}, \phibf^{ii})_H
	= \delta_{ik}\delta_{i\ell} 
	= 0, \qquad i=1,\ldots,N.
\]
This implies $(\out{P_\phibf(\vbf)}{\phibf} + \out{\phibf}{P_\phibf(\vbf)} )_{i,i} = 2\, ( \out{P_\phibf(\vbf)}{\phibf} )_{i,i} = 0$ for all $\vbf\in V$. Further, for $i\neq j$, we observe that 
\begin{align*}
	&\big(\out{P_\phibf(\vbf)}{\phibf} + \out{\phibf}{P_\phibf(\vbf)} \big)_{i,j}\\
	&\qquad= \bigl((P_\phibf(\vbf))_i, \phi_j\bigr)_{L^2(\Omega)} + \bigl((P_\phibf(\vbf))_j, \phi_i\bigr)_{L^2(\Omega)} \\
	&\qquad= (v_i, \phi_j)_{L^2(\Omega)} + (v_j, \phi_i)_{L^2(\Omega)} \\
	&\qquad\qquad- \frac{1}{\sqrt 2} \sum_{k<\ell} \Big[ (v_k, \phi_\ell)_{L^2(\Omega)} + (v_\ell, \phi_k)_{L^2(\Omega)} \Big]\, \Big[ (\psi^{k\ell}_i, \phi_j)_{L^2(\Omega)} + (\psi^{k\ell}_j, \phi_i)_{L^2(\Omega)}\Big] \\
	&\qquad= (v_i, \phi_j)_{L^2(\Omega)} + (v_j, \phi_i)_{L^2(\Omega)} 
	- \sum_{k<\ell} \Big[ (v_k, \phi_\ell)_{L^2(\Omega)} + (v_\ell, \phi_k)_{L^2(\Omega)} \Big]\, 
	\delta_{i k}\delta_{j \ell}
	= 0. 
\end{align*}
Finally, we show the $a_\phibf$-orthogonality property \eqref{property:projection2}. 
Indeed, for any $\etabf\in T_\phibf\,\Stiefel$, \eqref{eq:P_phi} and \eqref{property:Psi:a} yield 
\[
	a_\phibf(\vbf - P_\phibf(\vbf), \etabf) 
	= \sum_{k\le \ell} (\vbf, \phibf^{k\ell})_H\, a_\phibf(\psibf^{k\ell}, \etabf)
	= 0. 
\]
Thus, $P_\phibf$ is the $a_\phibf$-orthogonal projection onto $T_\phibf\,\Stiefel$.
\end{proof}
For the Riemannian gradient descent method, which will be introduced in Section~\ref{sect:Riemannian}, we are especially interested in the projection operator~$P_\phibf$ applied to $\phibf\in \Stiefel$. In this case, we get 
\begin{equation*}
	P_\phibf(\phibf) 
	= \phibf - \sum_{k=1}^N (\phi_k, \phi_k)_{L^2(\Omega)}\, \psibf^{kk}
	= \phibf - \sum_{k=1}^N \psibf^{kk}.
\end{equation*}
Hence, for the computation of $P_\phibf(\phibf)$, one only needs the sum $\psibf := \sum_{k=1}^N \psibf^{kk}$ of the functions $\psibf^{kk} \in V$, $k=1,\dots,N$. It follows from~\eqref{property:Psi} that this sum is uniquely defined by the equations
\begin{subequations}
\label{eq:SumOfPsi}
\begin{align}
  a_\phibf(\psibf, \etabf) 
  &= 0 \hspace{2.6em}\text{ for all } \etabf\in T_\phibf\,\Stiefel, \label{eq:SumOfPsi:a} \\
  (\psibf, \phibf S^{ij})_H 
  &= \delta_{ij} \qquad\text{ for } 1\le i\le j\le N. \label{eq:SumOfPsi:b}
\end{align}
\end{subequations}
The following proposition provides an~explicit expression for the solution $\psibf$. 
\begin{proposition}\label{prop:psi}
Let~$\phibf\in \Stiefel$. The unique solution of system \eqref{eq:SumOfPsi} is given by
\begin{equation}\label{eq:psi}
\psibf  = \mathcal A_\phibf^{-1}\phibf\, \out{\phibf}{\mathcal A_\phibf^{-1}\phibf}^{-1}.
\end{equation}
\end{proposition}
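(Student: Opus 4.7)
The plan is to verify that the candidate $\psibf$ given by~\eqref{eq:psi} satisfies the two conditions~\eqref{eq:SumOfPsi:a} and~\eqref{eq:SumOfPsi:b}; by Proposition~\ref{prop:existencePsi} (applied with $\psibf = \sum_k \psibf^{kk}$) uniqueness is automatic, so only existence needs to be demonstrated. First, I would argue that the formula makes sense: since $\phibf\in\Stiefel$ satisfies $\out{\phibf}{\phibf} = \Onebf$, its components are linearly independent and $\phibf\ne\bm0$, hence Proposition~\ref{prop:symmetr} guarantees that $M := \out{\phibf}{\calA_\phibf^{-1}\phibf}^{-1}$ exists and is symmetric positive definite.

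A convenient reformulation of~\eqref{eq:psi} is $\psibf = \calA_\phibf^{-1}(\phibf M)$. This follows from the observation (used already in the proof of Proposition~\ref{prop:symmetr}) that $\calA_\phibf^{-1}$ acts component-wise via $\tilde\calA_\phibf^{-1}$, which is linear in each entry and therefore commutes with right-multiplication by the constant matrix $M$. Using this form together with the defining identity~\eqref{eq:invA}, I obtain for any $\etabf\in T_\phibf\Stiefel$
\[
a_\phibf(\psibf,\etabf) = (\phibf M,\etabf)_H = \trace\,\out{\phibf M}{\etabf} = \trace\bigl(M^T \out{\phibf}{\etabf}\bigr),
\]
where I used Lemma~\ref{lem:propOuterProduct}. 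The tangent space condition gives $\out{\phibf}{\etabf} = -\out{\etabf}{\phibf}^T = -\out{\phibf}{\etabf}^T$, so $\out{\phibf}{\etabf}$ is skew-symmetric. Since $M$ is symmetric, the standard identity $\trace(\text{symmetric}\cdot\text{skew}) = 0$ yields~\eqref{eq:SumOfPsi:a}.

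For~\eqref{eq:SumOfPsi:b}, I compute $(\psibf,\phibf S^{ij})_H = \trace\bigl(\out{\psibf}{\phibf} S^{ij}\bigr)$. Using Lemma~\ref{lem:propOuterProduct} and the symmetry of $\out{\phibf}{\calA_\phibf^{-1}\phibf}$ from Proposition~\ref{prop:symmetr},
\[
\out{\psibf}{\phibf} = M^T \out{\calA_\phibf^{-1}\phibf}{\phibf} = M^T\,\out{\phibf}{\calA_\phibf^{-1}\phibf}^T = M\,M^{-1} = \Onebf,
\]
so $(\psibf,\phibf S^{ij})_H = \trace(S^{ij})$. A direct inspection of the definition~\eqref{eq:Sij} of $S^{ij}$ then gives $\trace(S^{ii}) = 1$ and $\trace(S^{ij}) = 0$ for $i<j$, which is precisely $\delta_{ij}$.

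The bulk of the argument is routine once one recognises the structural identity $\psibf = \calA_\phibf^{-1}(\phibf M)$; the main obstacle, and the step worth double-checking, is the bookkeeping with right-multiplication by $N\times N$ matrices (the interplay of Lemma~\ref{lem:propOuterProduct}, the symmetry of $M$ and of $\out{\phibf}{\calA_\phibf^{-1}\phibf}$, and the skew-symmetry of $\out{\phibf}{\etabf}$), since an incorrect transpose collapses the argument.
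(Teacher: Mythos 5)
Your proof is correct, but it takes a genuinely different route from the paper. The paper \emph{derives} the formula rather than verifying it: it rewrites system~\eqref{eq:SumOfPsi} as a saddle point problem with a symmetric Lagrange-multiplier matrix $S$, deduces from the first equation that $\psibf = -\calA_\phibf^{-1}\phibf\, S$, and substitutes into the constraint $\sym\bigl(\out{\psibf}{\phibf}\bigr)=\Onebf$ to obtain the Lyapunov equation $\out{\phibf}{\calA_\phibf^{-1}\phibf}S + S\,\out{\phibf}{\calA_\phibf^{-1}\phibf} = -2\Onebf$, whose unique symmetric solution $S=-\out{\phibf}{\calA_\phibf^{-1}\phibf}^{-1}$ yields~\eqref{eq:psi}; this establishes uniqueness en route and explains where the formula comes from. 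Your verification argument is shorter and more elementary --- it needs only~\eqref{eq:invA}, Lemma~\ref{lem:propOuterProduct}, the componentwise action of $\calA_\phibf^{-1}$ (which justifies $\calA_\phibf^{-1}\phibf\, M = \calA_\phibf^{-1}(\phibf M)$, a step the paper's proof never needs), and the vanishing of the trace of a product of a symmetric with a skew-symmetric matrix --- but it must borrow uniqueness from elsewhere. On that point your citation is slightly loose: Proposition~\ref{prop:existencePsi} gives uniqueness of the individual $\psibf^{k\ell}$, not of solutions to~\eqref{eq:SumOfPsi}. The cleanest repair is to note that the difference of two solutions lies in $(T_\phibf\,\Stiefel)_a^\perp=\sspan\{\psibf^{k\ell}\}$ by~\eqref{eq:SumOfPsi:a} and is $H$-orthogonal to all $\phibf^{ij}$ by the homogeneous version of~\eqref{eq:SumOfPsi:b}, hence vanishes by the biorthogonality~\eqref{property:Psi:b}; alternatively, invoke the same Brezzi saddle-point argument used in the proof of Proposition~\ref{prop:existencePsi}. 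With that small fix, all your computations (skew-symmetry of $\out{\phibf}{\etabf}$ for tangent vectors, $\out{\psibf}{\phibf}=\Onebf$, and $\trace S^{ij}=\delta_{ij}$) are sound.
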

\begin{proof}
System \eqref{eq:SumOfPsi} is equivalent to the saddle point problem 
\begin{subequations}
\label{eq:SumOfPsi_spp}
\begin{align}
  a_\phibf(\psibf, \vbf) + \sum_{i\leq j} (\phibf S^{ij},\vbf)_H \mu^{ij} &= 0 \hspace{2.6em}\text{ for all } \vbf\in V, \label{eq:SumOfPsi_spp:a} \\
  (\psibf, \phibf S^{ij})_H \hspace*{29mm}
  &=  \delta_{ij} \qquad\text{ for } 1\le i\le j\le N \label{eq:SumOfPsi_spp:b}
\end{align}
\end{subequations}
for $\psibf\in V$ and the Lagrange multipliers $\mu^{ij}\in\mathbb{R}$. Using the special structure of the matrices $S^{ij}$ in \eqref{eq:Sij}, the constraint conditions \eqref{eq:SumOfPsi_spp:b} can be written as
\mbox{$\mathrm{sym}\bigl( \out{\psibf}{\phibf}\bigr)=\Onebf$}, where $\mathrm{sym}(A)=\tfrac{1}{2}(A+A^T)$ denotes the symmetric part of a~matrix~$A\in\mathbb{R}^{N\times N}$. Further, we obtain
$$
\sum_{i\leq j} (\phibf S^{ij},\vbf)_H \mu^{ij} = \bigl(\phibf \sum_{i\leq j} S^{ij}\mu^{ij},\vbf \bigr)_H=(\phibf\,S,\vbf)_H
$$ 
with the symmetric matrix $S=\sum_{i\leq j} S^{ij}\mu^{ij}$. As a result, system \eqref{eq:SumOfPsi_spp}
takes the form
\begin{subequations}
\label{eq:SumOfPsi_spp2}
\begin{align}
  a_\phibf(\psibf, \vbf) + (\phibf\,S,\vbf)_H &= 0 \hspace{2.6em}\text{ for all } \vbf\in V, \label{eq:SumOfPsi_spp2:a} \\
  \mathrm{sym}\bigl( \out{\psibf}{\phibf} \bigr)\hspace*{12mm}
  &= \Onebf. \label{eq:SumOfPsi_spp2:b}
\end{align}
\end{subequations}
Using \eqref{eq:invA}, we derive from \eqref{eq:SumOfPsi_spp2:a} that
$$
0=a_\phibf(\psibf, \vbf) +a_\phibf(\calA_\phibf^{-1}\phibf\,S, \vbf) = a_\phibf(\psibf+\calA_\phibf^{-1}\phibf\,S, \vbf)
\qquad \text{ for all } \vbf\in V
$$
and, hence, $\psibf=-\calA_\phibf^{-1}\phibf\,S$. Substituting this function into \eqref{eq:SumOfPsi_spp2:b} yields the Lyapunov equation
\begin{equation}\label{eq:Lyapunov}
  \out{\phibf}{\mathcal A_\phibf^{-1}\phibf} S + S\, \out{\phibf}{\mathcal A_\phibf^{-1}\phibf} 
  = -2\Onebf
\end{equation}
for $S$. By Proposition~\ref{prop:symmetr}, the matrix $\out{\phibf}{\mathcal A_\phibf^{-1}\phibf}$ is symmetric positive definite. In this case, the Lyapunov equation \eqref{eq:Lyapunov} has a~unique symmetric solution \cite[Th.~12.3.2]{LancT85} given by 
$S = -\out{\phibf}{\mathcal A_\phibf^{-1}\phibf}^{-1}$. This finally gives the expression \eqref{eq:psi}.
\end{proof}
%
%
%
\subsection{Retractions}\label{sec:retraction}
Next, we introduce the concept of retractions on the Stiefel manifold $\Stiefel$. Retractions provide a useful tool in Riemannian optimization which allows us to keep the iteration points on the manifold. 
\begin{definition}[Retraction]\label{def:retraction}
Let $T\,\Stiefel$ be the tangent bundle of $\Stiefel$. A~smooth map \mbox{$\calR\colon T\,\Stiefel\to\Stiefel$} is called a~{\em retraction on $\Stiefel$} if for all $\phibf\in\Stiefel$, the restriction 
$\calR_\phibf=\calR\bigl|_{T_\phibf\,\Stiefel}\bigr.$ on $T_\phibf\,\Stiefel$ has the following properties:
\begin{itemize}
\setlength\itemsep{0.5em}
\item[a)] $\calR_\phibf(\bm{0}_\phibf) = \calR(\phibf, \bm{0}_\phibf) = \phibf$, where $\bm{0}_\phibf$ denotes the zero element of $T_\phibf\,\Stiefel$, 
\item[b)] $\tfrac{{\rm d}}{{\rm d}t} \calR_\phibf(t\etabf)\bigl|_{t=0}\bigr. = \etabf$ for all $\etabf\in T_\phibf\,\Stiefel$.
\end{itemize}
\end{definition}
In \cite[Ex.~4.1.3]{AbsiMS08} and \cite{AbsiM12,EdeAS98,KanFT13,SatA19}, several retractions on the (generalized) Stiefel matrix manifold have been introduced and compared with respect to computational cost and accuracy. Here, we extend some of the decomposition-based retractions to the manifold~$\Stiefel$.
%
\subsubsection{The projective retraction}\label{sec:retraction:polar}
First, we introduce a~retraction based on the polar decomposition and show that it provides a~projection onto~$\Stiefel$. 

Similarly to the matrix case, e.g., \cite[Sect.~9.4.3]{GoluV13}, we define the {\em polar decomposition} of $\vbf\in V$ as
$\vbf=\ubf S$, where $\ubf\in\Stiefel$ and $S\in\mathbb{R}^{N\times N}$ is symmetric positive semidefinite. Such a~decomposition
always exists. If the components of $\vbf$ are linearly independent, then the matrix $\out{\vbf}{\vbf}$ is positive definite. In this case, $S=\out{\vbf}{\vbf}^{1/2}$ is positive definite and the factor $\ubf=\vbf\, \out{\vbf}{\vbf}^{-1/2}$ is unique. 

For any $(\phibf,\etabf)\in T\,\Stiefel$, i.e., $\etabf\in T_\phibf\,\Stiefel$, the components of $\phibf+\etabf$ are linearly independent, since the matrix
\begin{equation}\label{eq:phi_eta}
  \out{\phibf+\etabf}{\phibf+\etabf} 
  = \out{\phibf}{\phibf} + \out{\phibf}{\etabf} + \out{\etabf}{\phibf} + \out{\etabf}{\etabf}
 = \Onebf + \out{\etabf}{\etabf}
\end{equation}
is positive definite. Then we can use the polar decomposition of $\phibf+\etabf$ to define a~retraction on $\Stiefel$.
\begin{proposition}
For $(\phibf,\etabf)\in T\,\Stiefel$, the map 
\begin{equation}\label{eq:retr_pol}
  \calR(\phibf,\etabf)
  :=(\phibf+\etabf)\bigl(\Onebf+\out{\etabf}{\etabf}\bigr)^{-1/2} 
\end{equation}
 is a~retraction on $\Stiefel$. 
\end{proposition}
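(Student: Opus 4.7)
The plan is to verify the three ingredients in Definition~\ref{def:retraction}: first, that $\calR(\phibf,\etabf)$ genuinely lands in $\Stiefel$ so that $\calR$ is well defined as a map $T\,\Stiefel\to\Stiefel$; second, smoothness; and third, the two defining properties (a) and (b) of the local restriction $\calR_\phibf$.

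For the first point, I would compute $\out{\calR(\phibf,\etabf)}{\calR(\phibf,\etabf)}$ directly. Setting $M:=(\Onebf+\out{\etabf}{\etabf})^{-1/2}$, this matrix is symmetric by construction, and by Lemma~\ref{lem:propOuterProduct},
\[
  \out{(\phibf+\etabf)M}{(\phibf+\etabf)M}
  = M^T\, \out{\phibf+\etabf}{\phibf+\etabf}\, M.
\]
The computation in \eqref{eq:phi_eta} already identifies the middle factor with $\Onebf+\out{\etabf}{\etabf}$, so with $M^T=M$ and $M^2=(\Onebf+\out{\etabf}{\etabf})^{-1}$ the product collapses to $\Onebf$, confirming $\calR(\phibf,\etabf)\in\Stiefel$. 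Note that the positive definiteness of $\Onebf+\out{\etabf}{\etabf}$ (established in the paragraph preceding the proposition) is what makes $M$ well defined and unique.

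Smoothness of $\calR$ follows because $\etabf\mapsto \out{\etabf}{\etabf}$ is smooth into the open cone of symmetric positive definite $N\times N$ matrices, the map $X\mapsto X^{-1/2}$ is smooth on that cone, and multiplication by $\phibf+\etabf$ from the left is smooth in $\etabf$. For property~(a), plugging $\etabf=\bm{0}_\phibf$ gives $\out{\bm{0}}{\bm{0}}=\bm{0}$, hence $M=\Onebf$ and $\calR_\phibf(\bm{0}_\phibf)=\phibf$.

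The last step is the first-order identity~(b). Along the curve $t\mapsto\calR_\phibf(t\etabf)=(\phibf+t\etabf)(\Onebf+t^2\out{\etabf}{\etabf})^{-1/2}$, the scalar argument of the inverse square root equals $\Onebf+O(t^2)$, so a short Taylor expansion yields $(\Onebf+t^2\out{\etabf}{\etabf})^{-1/2}=\Onebf-\tfrac{t^2}{2}\out{\etabf}{\etabf}+O(t^4)$, whose derivative at $t=0$ vanishes. The product rule then gives
\[
  \tfrac{\mathrm{d}}{\mathrm{d}t}\calR_\phibf(t\etabf)\Big|_{t=0}
  = \etabf\cdot\Onebf + \phibf\cdot\bm{0} = \etabf,
\]
which is precisely (b). I do not anticipate a real obstacle here; the only point requiring care is justifying that the matrix square root contributes no first-order term, but this is immediate from the $t^2$-dependence of its argument. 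Everything else is a direct application of Lemma~\ref{lem:propOuterProduct} and the identity~\eqref{eq:phi_eta} already proved in the text.
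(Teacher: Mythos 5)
Your proposal is correct and follows essentially the same route as the paper: verify membership in $\Stiefel$ via Lemma~\ref{lem:propOuterProduct} and \eqref{eq:phi_eta}, check property~(a) at $\etabf=\bm{0}_\phibf$, and confirm property~(b) by differentiating the curve $t\mapsto(\phibf+t\etabf)(\Onebf+t^2\out{\etabf}{\etabf})^{-1/2}$ — the paper computes the derivative explicitly while you argue via Taylor expansion that the matrix factor contributes no first-order term, which is the same observation. The only cosmetic slip is that $\out{\etabf}{\etabf}$ itself is merely positive semidefinite; it is $\Onebf+\out{\etabf}{\etabf}$ that lies in the open cone of positive definite matrices, which is what your smoothness argument actually uses.
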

\begin{proof}
Let $(\phibf,\etabf)\in T\,\Stiefel$. First, we verify that $\calR(\phibf,\etabf)$ belongs to $\Stiefel$. 
Using Lemma~\ref{lem:propOuterProduct} and \eqref{eq:phi_eta}, we obtain
\begin{align*}
  \out{\calR(\phibf,\etabf)}{\calR(\phibf,\etabf)}
  &= \big\llbracket (\phibf+\etabf)\bigl(\Onebf+\out{\etabf}{\etabf}\bigr)^{-1/2},(\phibf+\etabf)\bigl(\Onebf+\out{\etabf}{\etabf}\bigr)^{-1/2} \big\rrbracket_H \\
  &= \bigl(\Onebf+\out{\etabf}{\etabf}\bigr)^{-1/2}\bigl(\Onebf+\out{\etabf}{\etabf}\bigr)\bigl(\Onebf+\out{\etabf}{\etabf}\bigr)^{-1/2} = \Onebf,
\end{align*}
and, hence, $\calR(\phibf,\etabf)\in\Stiefel$. Furthermore, we have $\calR_\phibf(\bm{0}_\phibf)=\phibf$ and 
\begin{align*}
\tfrac{{\rm d}}{{\rm d}t} \calR_\phibf(t\etabf)\Bigl|_{t=0}\Bigr. 
&= 
\tfrac{{\rm d}}{{\rm d}t} (\phibf+t\etabf)\bigl(\Onebf+t^2\out{\etabf}{\etabf}\bigr)^{-1/2}\Bigl|_{t=0}\Bigr.\\
&= -t\,(\phibf+t\etabf)\, \out{\etabf}{\etabf}\bigl(\Onebf+t^2\out{\etabf}{\etabf}\bigr)^{-3/2}+\etabf\,\bigl(\Onebf+t^2\out{\etabf}{\etabf}\bigr)^{-1/2}\Bigl|_{t=0}\Bigr.\\
&=\etabf.
\end{align*}
This shows that $\calR$ defined in~\eqref{eq:retr_pol} is the retraction on $\Stiefel$. 
\end{proof}
The evaluation of the retraction in \eqref{eq:retr_pol} involves the computation of the outer pro\-duct~$\out{\etabf}{\etabf}$ and the eigenvalue decomposition  \begin{equation}\label{eq:VDVt}
  \Onebf + \out{\etabf}{\etabf} 
  = QDQ^T,
\end{equation}
where $Q\in\mathbb{R}^{N\times N}$ is orthogonal and $D=\diag(d_1,\ldots,d_N)$ with $d_j>0$ for $j=1,\ldots,N$. With this, we obtain $\calR(\phibf,\etabf)=(\phibf+\etabf)\,QD^{-1/2}Q^T$.
\begin{remark}
For stability reasons, we recommend to use $\out{\phibf+\etabf}{\phibf+\etabf}$ instead of $\Onebf+\out{\etabf}{\etabf}$ in \eqref{eq:VDVt}. 
A~similar suggestion for the generalized Stiefel matrix manifold can be found in~\cite{SatA19}. 
Note that, due to~\eqref{eq:phi_eta}, both expressions are equivalent if $\phibf\in\Stiefel$ and \mbox{$\etabf\in T_\phibf\,\Stiefel$}.
\end{remark}
The polar decomposition based retraction \eqref{eq:retr_pol} can be viewed as a~projective retraction, since it satisfies
\begin{equation}\label{eq:ProjRetr}
\calR(\phibf,\etabf)=\argmin_{\xibf\in\Stiefel}\|\xibf-(\phibf+\etabf)\|_H^2.
\end{equation}
To prove this, we first observe that for all $(\phibf,\etabf)\in T\,\Stiefel$, $\phibf+\etabf$ can be represented~as 
\begin{equation}\label{eq:svd}
\phibf+\etabf=\ubf D^{1/2} Q^T,
\end{equation}
where $\ubf\in\Stiefel$ and $D,Q$ are as in \eqref{eq:VDVt}. This decomposition is an~extension of the singular value decomposition known for matrices, e.g., \cite[Sect.~2.4]{GoluV13} to the elements of $V$. For any \mbox{$\xibf\in\Stiefel$}, we have 
\begin{align*}
\|\xibf-(\phibf+\etabf)\|_H^2 & = \|\xibf\|_H^2-2\,(\xibf,\phibf+\etabf)_H+\|\phibf+\etabf\|_H^2  
= N^2-2\,(\xibf,\phibf+\etabf)_H+\trace D
\end{align*}
with
\begin{align*}
  (\xibf,\phibf+\etabf)_H 
  &= \trace\, \big( \out{\xibf}{\ubf\, D^{1/2}Q^T} \big)
  = \trace\, \big( \out{\xibf}{\ubf} D^{1/2} \big)\\
  &= \sum_{i=1}^N (\xi_i,u_i)_{L^2(\Omega)}\sqrt{d_i}\leq  \sum_{i=1}^N \|\xi_i\|_{L^2(\Omega)}\|u_i\|_{L^2(\Omega)}\sqrt{d_i} =\trace D^{1/2}.
\end{align*}
For $\xibf=\ubf\,Q^T\in\Stiefel$, the equality 
$$
(\xibf,\phibf+\etabf)_H = \trace\, \out{\ubf\,Q^T}{\ubf\, D^{1/2}Q^T}=\trace D^{1/2} 
$$
holds, i.e., $\xibf=\ubf\,Q^T$ solves \eqref{eq:ProjRetr}. Thus, \mbox{$\calR(\phibf,\etabf)=(\phibf+\etabf)QD^{-1/2}Q^T = \ubf\,Q^T$} is a~projection onto $\Stiefel$.

The following proposition shows that the retraction \eqref{eq:retr_pol} is second-order bounded. 
\begin{proposition}\label{prop:retr_est}
The retraction $\calR$ in \eqref{eq:retr_pol} satisfies
\begin{equation*}
	\|\calR(\phibf, t\etabf)-(\phibf+t\etabf)\|_{a_\phibf}\leq 
	t^2\,\|\phibf+ t\etabf\|_{a_\phibf}\|\etabf\|_H^2.
\end{equation*}
\end{proposition}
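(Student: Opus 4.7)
The plan is to factor the defect of the retraction from its argument as a right-multiplication by an $N\times N$ matrix, translate the $a_\phibf$-norm of such a product into a matrix trace expression, and then reduce everything to a spectral bound on a single scalar function of the eigenvalues of $\out{\etabf}{\etabf}$.

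First I would write, using~\eqref{eq:retr_pol},
\[
  \calR(\phibf,t\etabf)-(\phibf+t\etabf)
  \,=\,(\phibf+t\etabf)\,C_t,
  \qquad
  C_t:=(\Onebf+t^2\out{\etabf}{\etabf})^{-1/2}-\Onebf.
\]
Since $\out{\etabf}{\etabf}$ is symmetric positive semidefinite, $C_t$ is symmetric. Next I would express the squared $a_\phibf$-norm as a trace. Using the additive structure \eqref{eq:aphi} and writing $A_{ij}:=\tilde a_\phibf(v_i,v_j)$ for $\vbf=\phibf+t\etabf$, a direct calculation gives
\[
  \|\vbf C_t\|_{a_\phibf}^{2}
  \,=\,\sum_{i=1}^N \tilde a_\phibf\Big(\sum_j v_j (C_t)_{ji},\sum_k v_k (C_t)_{ki}\Big)
  \,=\,\trace\big(A\,C_t C_t^T\big)
  \,=\,\trace(A\,C_t^2).
\]
Crucially, $A$ is a Gram matrix of the inner product $\tilde a_\phibf$, hence symmetric positive semidefinite, and $\trace(A)=\sum_i \tilde a_\phibf(v_i,v_i)=\|\vbf\|_{a_\phibf}^{2}$.

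The middle step is the standard inequality $\trace(AB)\le\|B\|_{2}\,\trace(A)$ valid for symmetric positive semidefinite $A,B$ (with $\|\cdot\|_2$ denoting the spectral norm); applying it to $A$ and $B=C_t^2$ yields
\[
  \|\calR(\phibf,t\etabf)-(\phibf+t\etabf)\|_{a_\phibf}^{2}
  \,\le\,\|C_t\|_{2}^{2}\;\|\phibf+t\etabf\|_{a_\phibf}^{2}.
\]
It then remains to bound $\|C_t\|_2$. I would diagonalise $\out{\etabf}{\etabf}=QDQ^T$ with $D=\diag(d_1,\ldots,d_N)$, $d_i\ge 0$, so that the eigenvalues of $C_t$ are $(1+t^2 d_i)^{-1/2}-1$. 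The scalar inequality $\,0\le 1-(1+x)^{-1/2}\le x/2\le x$ for $x\ge 0$ (which follows from writing $1-(1+x)^{-1/2}=\tfrac{x}{(1+x)^{1/2}\,((1+x)^{1/2}+1)}$) gives $\|C_t\|_{2}\le t^2\,\|\out{\etabf}{\etabf}\|_{2}$. Finally, since $\out{\etabf}{\etabf}$ is positive semidefinite, its spectral norm is dominated by its trace, and $\trace\out{\etabf}{\etabf}=\|\etabf\|_H^2$ by \eqref{eq:inner}. Combining the two bounds produces $\|C_t\|_2\le t^2\|\etabf\|_H^2$, which plugged into the previous display completes the proof.

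I do not anticipate any real obstacle: the only subtlety is recognizing that the $a_\phibf$-norm of a right-matrix-multiplication factors cleanly through the Gram matrix $A$, so that the problem reduces to the elementary scalar estimate on the eigenvalues of $(\Onebf+t^2\out{\etabf}{\etabf})^{-1/2}-\Onebf$. The passage from the spectral norm of $\out{\etabf}{\etabf}$ to $\|\etabf\|_H^2$ via the trace bound is in fact the step that sacrifices a factor and explains why the statement is ``second-order bounded'' rather than a sharper eigenvalue-type estimate.
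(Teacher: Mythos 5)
Your proof is correct and follows essentially the same route as the paper: factor the defect as $(\phibf+t\etabf)\,C_t$ with $C_t=(\Onebf+t^2\out{\etabf}{\etabf})^{-1/2}-\Onebf$, bound the $a_\phibf$-norm by $\|\phibf+t\etabf\|_{a_\phibf}\|C_t\|_2$, and then estimate $\|C_t\|_2\le t^2\|\out{\etabf}{\etabf}\|_2\le t^2\|\etabf\|_H^2$ via a scalar eigenvalue bound. The only differences are cosmetic: you justify the norm inequality explicitly through the Gram-matrix trace argument (which the paper asserts without proof), and you replace the paper's mean value theorem step by the algebraic factorization $1-(1+x)^{-1/2}=x\bigl((1+x)^{1/2}((1+x)^{1/2}+1)\bigr)^{-1}$.
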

\begin{proof}
The proof is given in Appendix~\ref{app:proof:projective}. 
\end{proof}	
%
%
\subsubsection{The $qR$-based retraction}\label{sec:retraction:qR}
An alternative retraction on $\Stiefel$ can be defined by using the orthonormalization with respect to the inner product~$(\,\cdot\,, \cdot\,)_H$. First, we observe that for any $\vbf=(v_1,\ldots,v_N)\in V$ with linearly independent components, there exist \mbox{$\qbf\in\Stiefel$} and an~upper triangular matrix $R\in\mathbb{R}^{N\times N}$ with strictly positive diagonal ele\-ments such that $\vbf=\qbf R$. The existence of such a~decomposition, called {\em $qR$ decomposition}, can be proved constructively by using the Gram-Schmidt orthonormalization procedure
\begin{equation}\label{eq:GramSchmidt}
\begin{array}{ll}
\tilde{q}_1 := v_1, &\displaystyle{\qquad q_1:=\frac{\tilde{q}_1}{\|\tilde{q}_1\|_{L^2(\Omega)}}, }\\
\tilde{q}_j := v_j - \displaystyle{\sum\limits_{i=1}^{j-1}} (v_j, q_i)_{L^2(\Omega)}\, q_i, &\qquad
\displaystyle{q_j := \frac{\tilde{q}_j}{\|\tilde{q}_j\|_{L^2(\Omega)}}, \qquad j=2,\ldots, N.}
\end{array}
\end{equation}
With this, we obtain $\qbf=(q_1,\ldots,q_N)\in\Stiefel$ and 
$$
R=\begin{bmatrix} 
(v_1,q_1)_{L^2(\Omega)} & (v_2,q_1)_{L^2(\Omega)} & \cdots & \cdots & (v_N,q_1)_{L^2(\Omega)} \\
0                 & (v_2,q_2)_{L^2(\Omega)} & \cdots & \cdots & (v_N,q_2)_{L^2(\Omega)} \\
0                &	0                  & \ddots & & \vdots \\
\vdots             &	                 & \ddots & \ddots & \vdots \\
0                 &  0      & \cdots & 0  & (v_N,q_N)_{L^2(\Omega)}
\end{bmatrix}.
$$
Note that the matrix $R$ has positive diagonal elements $(v_j,q_j)_{L^2(\Omega)}=\|\tilde{q}_j\|_{L^2(\Omega)}$. This property of $R$ guarantees the uniqueness of the $qR$ decomposition. Let $\qf(\vbf)$ denote the factor $\qbf$ in $\vbf=\qbf R$. This allows us to define a~$qR$-based retraction on the Stiefel manifold~$\Stiefel$. 
\begin{proposition}
For~$(\phibf,\etabf)\in T\,\Stiefel$, the map 
\begin{equation}\label{eq:retr_qR}
\calR(\phibf,\etabf):=\qf(\phibf+\etabf)
\end{equation}
 is a~retraction on $\Stiefel$. 
\end{proposition}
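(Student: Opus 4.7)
The plan is to verify the three defining properties of a retraction from Definition~\ref{def:retraction} for the map $\calR(\phibf,\etabf):=\qf(\phibf+\etabf)$, leveraging the existence and uniqueness of the $qR$ decomposition established just before the statement. The only delicate point is the local differentiability condition (b), which requires differentiating through the Gram-Schmidt process.

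First, I would establish that $\calR(\phibf,\etabf)\in\Stiefel$. By~\eqref{eq:phi_eta}, $\out{\phibf+\etabf}{\phibf+\etabf}=\Onebf+\out{\etabf}{\etabf}$ is positive definite, so the components of $\phibf+\etabf$ are linearly independent and the $qR$ decomposition $\phibf+\etabf=\qbf R$ exists and is unique, with $\qbf=\qf(\phibf+\etabf)\in\Stiefel$ by construction of the Gram-Schmidt procedure~\eqref{eq:GramSchmidt}. For property~(a), I would apply uniqueness of the $qR$ decomposition to $\phibf$ itself: since $\out{\phibf}{\phibf}=\Onebf$, the trivial factorization $\phibf=\phibf\cdot\Onebf$ has $\phibf\in\Stiefel$ and $\Onebf$ upper triangular with positive diagonal, so by uniqueness $\qf(\phibf)=\phibf$, giving $\calR_\phibf(\bm{0}_\phibf)=\phibf$.

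The main task is property~(b). I would introduce the curve $\vbf(t):=\phibf+t\etabf$ and let $\vbf(t)=\qbf(t)R(t)$ denote its $qR$ decomposition for $t$ in a neighborhood of $0$, where $\qbf(t)\in\Stiefel$ and $R(t)$ is upper triangular with positive diagonal. Smoothness of $t\mapsto(\qbf(t),R(t))$ near $t=0$ follows from the recursive Gram-Schmidt formulas~\eqref{eq:GramSchmidt}, which are smooth compositions of linear maps and the map $q\mapsto q/\|q\|_{L^2(\Omega)}$; the denominators stay bounded away from zero since $R(0)=\Onebf$ and $\qbf(0)=\phibf$. Differentiating $\vbf(t)=\qbf(t)R(t)$ at $t=0$ yields $\etabf=\qbf'(0)+\phibf\,R'(0)$, where $R'(0)$ is upper triangular as the derivative of upper triangular matrices.

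To finish, I would use the tangency of $\etabf$ and the constraint on $\qbf(t)$. Since $\qbf(t)\in\Stiefel$, differentiating $\out{\qbf(t)}{\qbf(t)}=\Onebf$ at $t=0$ gives $\qbf'(0)\in T_\phibf\,\Stiefel$, so $\out{\qbf'(0)}{\phibf}+\out{\phibf}{\qbf'(0)}=\Nullbf$. Applying the identity $\etabf=\qbf'(0)+\phibf\,R'(0)$ together with Lemma~\ref{lem:propOuterProduct} and $\out{\phibf}{\phibf}=\Onebf$, the tangency condition $\out{\etabf}{\phibf}+\out{\phibf}{\etabf}=\Nullbf$ reduces to $R'(0)^T+R'(0)=\Nullbf$. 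Because $R'(0)$ is upper triangular, the diagonal entries force $2R'(0)_{ii}=0$ and the off-diagonal entries force $R'(0)_{ij}=0$ for $i<j$, so $R'(0)=\Nullbf$. Therefore $\tfrac{{\rm d}}{{\rm d}t}\calR_\phibf(t\etabf)|_{t=0}=\qbf'(0)=\etabf$, establishing~(b). I expect the only subtlety to lie in justifying the smoothness of the $qR$ factors near $t=0$; this is standard but must be stated explicitly.
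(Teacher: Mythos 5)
Your proof is correct, and it differs from the paper's argument in how the key differentiability property (b) is established, even though both start from the same setup: the $qR$ decomposition $\phibf+t\etabf=\qbf(t)R(t)$ along the curve $\varphibf(t)=\phibf+t\etabf$. The paper, following \cite[Ex.~8.1.5]{AbsiMS08}, derives an explicit formula for $\dot{\qbf}(t)$ valid for \emph{all} $t$: it forms $\out{\qbf}{\dot{\varphibf}}R^{-1}=\out{\qbf}{\dot{\qbf}}+\dot{R}R^{-1}$ and exploits the unique splitting of a matrix into a skew-symmetric plus an upper-triangular part to solve for $\dot{\qbf}$, then evaluates at $t=0$ where the terms $\mp\phibf\,\out{\phibf}{\etabf}$ cancel because $\out{\phibf}{\etabf}$ is skew-symmetric. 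You instead work only at $t=0$: from $\etabf=\qbf'(0)+\phibf\,R'(0)$, the constraint $\out{\qbf(t)}{\qbf(t)}=\Onebf$, and the tangency of $\etabf$, you conclude that $R'(0)$ is simultaneously skew-symmetric and upper triangular, hence zero, so $\qbf'(0)=\etabf$ immediately. Both arguments ultimately rest on the same linear-algebra fact (skew-symmetric $\cap$ upper triangular $=\{0\}$), but your version is more economical and self-contained for the purpose at hand, and it additionally makes explicit two points the paper treats as obvious: the verification of property (a) via uniqueness of the $qR$ decomposition of $\phibf=\phibf\,\Onebf$, and the smoothness of $t\mapsto(\qbf(t),R(t))$ near $t=0$ via the Gram-Schmidt recursion with denominators bounded away from zero. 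What the paper's longer route buys is the general expression for $\dot{\qbf}(t)$ away from $t=0$, which is not needed for the proposition but mirrors the standard finite-dimensional reference.
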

\begin{proof}
Obviously, $\calR$ in \eqref{eq:retr_qR} is well defined on $T\,\Stiefel$. Further, by definition, we have $\calR(\phibf,\etabf)\in\Stiefel$ for all $(\phibf,\etabf)\in T\,\Stiefel$ and $\calR_\phibf(\bm{0}_\phibf)=\phibf$. 

In order to prove the second property in Definition~\ref{def:retraction}, we follow the lines of~\cite[Ex.~8.1.5]{AbsiMS08}.
For any $(\phibf,\etabf)\in T\,\Stiefel$, we consider a~curve $\varphibf(t)=\phibf+t\etabf$. Let \mbox{$\varphibf(t)=\qbf(t)R(t)$} be the $qR$~decomposition of $\varphibf(t)$. Then, using the product rule, we have
\begin{equation}\label{eq:dotPsi}
\dot{\varphibf}(t)=\dot{\qbf}(t)R(t)+\qbf(t)\dot{R}(t),
\end{equation}
where $\dot{\varphibf}(t)=\tfrac{\rm d}{{\rm d}t}\varphibf(t)$ and similar for $\qbf(t)$ and $R(t)$. For the sake of brevity, we omit the argument $t$ in what follows. Computing the outer product of $\qbf$ and $\dot{\varphibf}$, we obtain 
\begin{equation}\label{eq:qDotPsi}
  \out{\qbf}{\dot{\varphibf}} 
  = \out{\qbf}{\dot{\qbf}} R + \out{\qbf}{\qbf} \dot{R}
  = \out{\qbf}{\dot{\qbf}} R + \dot{R}.
\end{equation}
Multiplication of~\eqref{eq:qDotPsi} by $R^{-1}$ from the right yields 
$$
  \out{\qbf}{\dot{\varphibf}} R^{-1} 
  = \out{\qbf}{\dot{\qbf}} + \dot{R}R^{-1}, 
$$
where $\out{\qbf}{\dot{\qbf}}$ is skew-symmetric and~$\dot{R}R^{-1}$ is upper triangular. Since~$M:=\out{\qbf}{\dot{\varphibf}} R^{-1}$ can uniquely be represented as $M=\varrho_{\rm skew}(M)+\varrho_{\rm up}(M)$, where 
$\varrho_{\rm skew}(M)$ is skew-sym\-met\-ric and $\varrho_{\rm up}(M)$ is upper triangular, we obtain 
\[
  \varrho_{\rm skew}(\out{\qbf}{\dot{\varphibf}} R^{-1})
  = \out{\qbf}{\dot{\qbf}}, \qquad 
  \varrho_{\rm up}(\out{\qbf}{\dot{\varphibf}} R^{-1}) 
  = \dot{R}R^{-1}.
\]
Further, multiplying~\eqref{eq:qDotPsi} by~$\qbf$ from the left and subtracting the resulting equation from \eqref{eq:dotPsi},
we find 
$$
\dot{\varphibf}-\qbf\, \out{\qbf}{\dot{\varphibf}} 
= \dot{\qbf}R-\qbf\, \varrho_{\rm skew}(\out{\qbf}{\dot{\varphibf}} R^{-1})R,
$$
which implies 
$$
\dot{\qbf} 
= (\dot{\varphibf}-\qbf\, \out{\qbf}{\dot{\varphibf}})R^{-1} + \qbf\, \varrho_{\rm skew}(\out{\qbf}{\dot{\varphibf}} R^{-1}).
$$
Taking into account that $\dot{\varphibf}(0)=\etabf$, $\varphibf(0)=\phibf=\qbf(0)$, $R(0)=\Onebf$, and that~$\out{\phibf}{\etabf}$ is skew-symmetric, we finally obtain
$$
\tfrac{{\rm d}}{{\rm d}t} \calR_\phibf(t\etabf)\Bigl|_{t=0}\Bigr. 
= \tfrac{{\rm d}}{{\rm d}t}\, \qbf(t)\Bigl|_{t=0}\Bigr.
= \etabf-\phibf\,\out{\phibf}{\etabf}+\phibf\,\out{\phibf}{\etabf}= \etabf, 
$$
which completes the proof.
\end{proof}
The $qR$-based retraction \eqref{eq:retr_qR} can be computed by the modified Gram-Schmidt procedure as presented in Algorithm~\ref{alg:modGS} which is more numerically stable than the Gram-Schmidt process~\eqref{eq:GramSchmidt}. An~alternative approach for evaluating \eqref{eq:retr_qR} is based on computing the Cholesky factorization $\out{\phibf+\etabf}{\phibf+\etabf}=F^TF$ with an~upper triangular matrix $F\in\mathbb{R}^{N\times N}$ and determining 
\begin{equation} \label{eq:retrChol}
\calR(\phibf,\etabf)=(\phibf+\etabf)\, F^{-1}.
\end{equation}
It is an~extension of the Cholesky-QR-based method on the generalized matrix Stiefel ma\-ni\-fold presented in \cite{SatA19}. Compared to the polar decomposition based retraction~\eqref{eq:retr_pol}, the computation of \eqref{eq:retrChol} has lower numerical complexity, especially for large $N$, since it requires the Cholesky factorization instead of the eigenvalue decomposition.
\begin{algorithm}[t]
\setstretch{1.15}
\caption{Modified Gram-Schmidt procedure}
\label{alg:modGS}
\begin{algorithmic}[1]
	\State {\bf Input}: $\vbf = (v_1, \dots, v_N) \in V$ 
	%
	\For{$i=1,\ldots,N$} 
      \State{$r_{ii}=\|v_i\|_{L^2(\Omega)}$}
			\State{$q_i=v_i/r_{ii}$}
      \For{$j=i+1,\ldots,N$}
			    \State{$r_{ij}=(v_j,q_i)_{L^2(\Omega)}$}
					\State{$v_j=v_j-r_{ij}q_i$}
      \EndFor
	\EndFor
	%
	\State {\bf Output}: $\qbf=(q_1,\ldots,q_N)\in \Stiefel$ and $R=[r_{ij}]\in\mathbb{R}^{N\times N}$ such that $\vbf=\qbf R$
\end{algorithmic}
\end{algorithm} 

The following proposition establishes the second-order boundedness of the $qR$-based retraction \eqref{eq:retr_qR}.
\begin{proposition}\label{prop:retr_est_qR}
The retraction $\calR$ in~\eqref{eq:retr_qR} satisfies
\[
	\|\calR(\phibf, t\etabf)-(\phibf+t\etabf)\|_{a_\phibf}\leq 
	\frac{t^2}{\sqrt{2}}\,\|\phibf+ t\etabf\|_{a_\phibf}\bigl(1+t^2\|\etabf\|_H^2\bigr)^{1/2}\|\etabf\|_H^2.
\]
\end{proposition}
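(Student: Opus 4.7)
The plan is to mirror the proof of Proposition~\ref{prop:retr_est}, but with the polar square root replaced by the upper-triangular Cholesky factor. Starting from $\phibf+t\etabf=\calR(\phibf,t\etabf)\,R$ with $R\in\mathbb{R}^{N\times N}$ upper triangular and positive diagonal, I would write
\[
  \calR(\phibf,t\etabf)-(\phibf+t\etabf)=(\phibf+t\etabf)(R^{-1}-\Onebf),
\]
and use \eqref{eq:phi_eta} together with $\phibf\in\Stiefel$ and $\etabf\in T_\phibf\,\Stiefel$ to identify $R^T R=\Onebf+t^2\out{\etabf}{\etabf}$, so that $R$ is the unique Cholesky factor of $\Onebf+t^2\out{\etabf}{\etabf}$. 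In particular $R^T R\succeq\Onebf$, hence $\|R^{-1}\|_2\le 1$.

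Second, I would reduce the $a_\phibf$-norm estimate to a finite-dimensional matrix estimate via the submultiplicativity
\[
  \|\vbf M\|_{a_\phibf}\le \|\vbf\|_{a_\phibf}\,\|M\|_F, \qquad \vbf\in V,\ M\in\mathbb{R}^{N\times N},
\]
which follows from $\|\vbf M\|_{a_\phibf}^2=\trace(AMM^T)\le \|A\|_2\|M\|_F^2\le \trace(A)\|M\|_F^2=\|\vbf\|_{a_\phibf}^2\|M\|_F^2$ for the positive semidefinite matrix $A=[\tilde a_\phibf(v_i,v_j)]$. Combined with $\|R^{-1}-\Onebf\|_F=\|R^{-1}(\Onebf-R)\|_F\le\|R-\Onebf\|_F$, this gives
\[
  \|\calR(\phibf,t\etabf)-(\phibf+t\etabf)\|_{a_\phibf}\le \|\phibf+t\etabf\|_{a_\phibf}\,\|R-\Onebf\|_F.
\]

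The remaining and principal task is the Cholesky perturbation estimate $\|R-\Onebf\|_F\le \tfrac{1}{\sqrt 2}\bigl(1+t^2\|\etabf\|_H^2\bigr)^{1/2}t^2\|\etabf\|_H^2$. Setting $S:=R-\Onebf$ and $Y:=t^2\out{\etabf}{\etabf}$, the identity $R^T R=\Onebf+Y$ becomes $S+S^T+S^T S=Y$, and matching triangular parts yields the fixed-point relation $S=U_{1/2}(Y-S^T S)$, where $U_{1/2}$ extracts the strict upper triangle plus half the diagonal of a symmetric matrix. A direct computation gives $\|U_{1/2}(Z)\|_F^2\le \tfrac12\|Z\|_F^2$ for every symmetric $Z$ (the halved diagonal produces exactly the prefactor $1/\sqrt 2$), so that $\|S\|_F\le \tfrac{1}{\sqrt 2}\|Y-S^T S\|_F$. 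The hard part I expect is to bound $\|Y-S^T S\|_F$ by $\|Y\|_F(1+\|Y\|_2)^{1/2}$: I would combine the triangle inequality $\|Y-S^T S\|_F\le \|Y\|_F+\|S\|_F\|S\|_2$ with the spectral identity $\|R\|_2^2=1+\|Y\|_2$ and solve the resulting implicit quadratic inequality for $\|S\|_F$, in the spirit of a Banach fixed-point analysis of the Cholesky recursion. The stated estimate then follows from $\|\out{\etabf}{\etabf}\|_F,\,\|\out{\etabf}{\etabf}\|_2\le \|\etabf\|_H^2$, after a collection of the $(1+t^2\|\etabf\|_H^2)^{1/2}$ factor.
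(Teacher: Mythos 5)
Your reduction steps are correct and in fact parallel the paper's proof: the identification $R^TR=\Onebf+t^2\out{\etabf}{\etabf}$, the bound $\|R^{-1}\|_2\le 1$, the submultiplicativity $\|\vbf M\|_{a_\phibf}\le\|\vbf\|_{a_\phibf}\|M\|_F$, and your inequality $\|U_{1/2}(Z)\|_F\le\tfrac{1}{\sqrt2}\|Z\|_F$ (which is exactly the paper's $2\,\|\mathrm{up}(M)\|_F^2\le\|M\|_F^2$) are all fine. The gap is precisely where you flag it: the Cholesky perturbation estimate, and the route you sketch for it cannot work. First, note that your fixed-point relation is a tautology: with $S=R-\Onebf$ one has $Y-S^TS=S+S^T$ identically, so $S=U_{1/2}(Y-S^TS)$ carries no information beyond $S=U_{1/2}(S+S^T)$; all content must come from controlling $\|S\|_2$. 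Your two tools for that both fail. (i) The identity $\|R\|_2^2=1+\|Y\|_2$ does not control $\|S\|_2=\|R-\Onebf\|_2$ because $R$ is not normal: for $R=\bigl(\begin{smallmatrix}1&a\\0&1\end{smallmatrix}\bigr)$ one has $\|R-\Onebf\|_2=a$ while $\|R\|_2-1\approx a/2$. (ii) Falling back on $\|S\|_2\le\|S\|_F$ gives the implicit inequality $x\le\tfrac{1}{\sqrt2}\,(y+x^2)$ with $x=\|S\|_F$, $y=\|Y\|_F$. Solving it requires $y<\tfrac12$ plus a continuity argument to select the lower branch (so nothing is obtained for $t^2\|\etabf\|_H^2\ge\tfrac12$, whereas the proposition is claimed for all $t$), and even then it yields only $x\le \sqrt{2}\,y/\bigl(1+\sqrt{1-2y}\bigr)$, which is \emph{strictly weaker} than the target $\tfrac{y}{\sqrt2}(1+y)^{1/2}$: the required comparison $(1+y)^{1/2}\bigl(1+\sqrt{1-2y}\bigr)\ge 2$ is false for every $y>0$ (at $y=0.1$ the left-hand side is $\approx 1.987$). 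So the stated constant cannot come out of this argument.

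The paper sidesteps the static perturbation problem by differentiating along the curve: writing $\varphibf(t)=\phibf+t\etabf=\qbf(t)R(t)$ and differentiating $R^T(t)R(t)=\Onebf+t^2\out{\etabf}{\etabf}$ gives $\dot R(t)=2t\,\mathrm{up}\bigl(R^{-T}(t)\out{\etabf}{\etabf}R^{-1}(t)\bigr)R(t)$. The decisive ingredient your approach is missing is the conjugation collapse
\begin{equation*}
\big\|R^{-T}(t)\out{\etabf}{\etabf}R^{-1}(t)\big\|_F
=\big\|\out{\etabf}{\etabf}\bigl(\Onebf+t^2\out{\etabf}{\etabf}\bigr)^{-1}\big\|_F
\le\|\etabf\|_H^2,
\end{equation*}
where the inverse factors are absorbed by the resolvent with no loss. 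Combining this with the halved-diagonal inequality and $\|R(s)\|_2\le(1+s^2\|\etabf\|_H^2)^{1/2}$, and then integrating $\|R(t)-\Onebf\|\le\int_0^t\|\dot R(s)\|_F\,\mathrm{d}s$, produces exactly the factor $\tfrac{t^2}{\sqrt2}\,(1+t^2\|\etabf\|_H^2)^{1/2}\|\etabf\|_H^2$, valid for all $t$ and with no smallness restriction. If you insist on a static argument, you would have to prove $\|S+S^T\|_F\le\|Y\|_F\,(1+\|Y\|_2)^{1/2}$ directly, which is precisely the step you have deferred.
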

\begin{proof}
The proof is given in Appendix~\ref{app:proof:qR}. 
\end{proof}
%
%
%
\section{Energy-Adaptive Riemannian Gradient Descent Method}\label{sect:Riemannian}
The simplest approach to minimize the energy functional $\calE$ over $\Stiefel$ is the gradient descent method, which requires the Riemannian gradient of~$\calE$. For a~smooth scalar field~$\calE$ on the Riemannian manifold $\Stiefel$, the~{\em Riemannian gradient} $\grad\calE(\phibf)$ of~$\calE$ at~\mbox{$\phibf\in\Stiefel$} with respect to the metric $g_a$ is defined as the unique element of the tangent space $T_\phibf\,\Stiefel$ satisfying 
$$
g_a(\grad\calE(\phibf),\etabf)
= a_{\phibf}(\grad\calE(\phibf),\etabf) 
= \mathrm{D}\calE(\phibf)[\etabf]
\qquad \text{ for all } \etabf\in T_\phibf\,\Stiefel.
$$ 
Since $\Stiefel$ is an embedded submanifold of $V$, we obtain the following expression for the Riemannian gradient.
\begin{proposition}
The Riemannian gradient of~the energy functional~$\calE\colon V\to\mathbb{R}$ from~\eqref{eq:energy} at $\phibf\in\Stiefel$ with respect to the metric $g_a$ is given by 
\begin{equation}\label{eq:gradE}
\grad\calE(\phibf) 
= P_\phibf(\phibf)
= \phibf - \mathcal A_\phibf^{-1}\phibf\, \out{\phibf}{\mathcal A_\phibf^{-1}\phibf}^{-1}.
\end{equation}
\end{proposition}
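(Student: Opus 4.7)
The plan is to unpack the defining variational characterization of the Riemannian gradient and recognize that, with the $g_a$ metric, it reduces to the $a_\phibf$-orthogonal projection of $\phibf$ onto the tangent space, which has already been computed explicitly.

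First, I would invoke the definition of $\grad\calE(\phibf)$ with respect to $g_a$ together with formula~\eqref{eq:DerivativeE} for the directional derivative. This rewrites the defining relation as
\begin{equation*}
  a_\phibf(\grad\calE(\phibf), \etabf) = a_\phibf(\phibf,\etabf) \qquad \text{for all } \etabf \in T_\phibf\,\Stiefel,
\end{equation*}
i.e., $a_\phibf(\phibf - \grad\calE(\phibf), \etabf) = 0$ for every tangent vector $\etabf$. Since $\grad\calE(\phibf)$ must itself lie in $T_\phibf\,\Stiefel$, comparison with property~\eqref{property:projection2} identifies $\grad\calE(\phibf)$ uniquely as $P_\phibf(\phibf)$, the $a_\phibf$-orthogonal projection of $\phibf$ onto $T_\phibf\,\Stiefel$.

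Next, I would substitute the explicit form of this projection. By formula~\eqref{eq:P_phi} applied to $\vbf = \phibf$, only the diagonal terms $(\phi_k,\phi_k)_{L^2(\Omega)} = 1$ survive (the off-diagonal inner products $(\phi_k,\phi_\ell)_{L^2(\Omega)}$ with $k<\ell$ vanish because $\phibf \in \Stiefel$), so
\begin{equation*}
  P_\phibf(\phibf) = \phibf - \sum_{k=1}^N \psibf^{kk} = \phibf - \psibf,
\end{equation*}
where $\psibf$ is the unique solution of~\eqref{eq:SumOfPsi}. Finally, an appeal to Proposition~\ref{prop:psi} substitutes $\psibf = \mathcal{A}_\phibf^{-1}\phibf\,\out{\phibf}{\mathcal{A}_\phibf^{-1}\phibf}^{-1}$, which yields~\eqref{eq:gradE}.

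There is no real obstacle here: every ingredient has been prepared, and the proof is essentially a two-line identification followed by a citation of Proposition~\ref{prop:psi}. The only small point worth noting is that the matrix $\out{\phibf}{\mathcal{A}_\phibf^{-1}\phibf}$ must be invertible for the formula to make sense; this is guaranteed by Proposition~\ref{prop:symmetr} since the components of $\phibf \in \Stiefel$ are necessarily linearly independent (any non-trivial linear combination $\phibf x$ would have $\|\phibf x\|_H^2 = x^T \out{\phibf}{\phibf} x = x^T x > 0$), so one could mention this briefly for completeness.
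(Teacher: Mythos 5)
Your proof is correct and follows essentially the same route as the paper: both use the defining relation of the Riemannian gradient together with~\eqref{eq:DerivativeE} to show $a_\phibf(\phibf-\grad\calE(\phibf),\etabf)=0$ for all tangent $\etabf$, identify $\grad\calE(\phibf)=P_\phibf(\phibf)$ via the uniqueness of the $a_\phibf$-orthogonal decomposition, and then invoke Proposition~\ref{prop:psi} for the explicit formula. Your added remark on the invertibility of $\out{\phibf}{\calA_\phibf^{-1}\phibf}$ via Proposition~\ref{prop:symmetr} is a small but welcome completeness point that the paper leaves implicit.
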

\begin{proof}
Using~\eqref{eq:DerivativeE}, we obtain 
$$
a_{\phibf}(\grad\calE(\phibf),\etabf) = \mathrm{D}\calE(\phibf)[\etabf] = a_{\phibf}(\phibf,\etabf)\qquad  \text{ for all } \etabf\in T_\phibf\,\Stiefel.
$$
Hence, $a_{\phibf}(\grad\calE(\phibf)-\phibf,\etabf)=0$ for all $\etabf\in T_\phibf\,\Stiefel$. This implies that $\grad\calE(\phibf)-\phibf$ belongs to the normal space $(T_\phibf\,\Stiefel)_a^\perp$ and, hence, 
$$
\grad\calE(\phibf)
= \phibf + P_\phibf^\perp\bigl(\grad\calE(\phibf)-\phibf\bigr) 
= \phibf - P_\phibf^\perp(\phibf)
= P_\phibf(\phibf).
$$
The second expression for $\grad\calE(\phibf)$
in~\eqref{eq:gradE} immediately follows from Proposition~\ref{prop:psi}.
\end{proof}
Using the Riemannian gradient and any retraction $\calR$ on $\Stiefel$ from Section~\ref{sec:retraction}, the Riemannian gradient descent method for solving the minimization problem \eqref{eq:opt} can be formulated as follows: for given~$\phibf^{(n)}\in\Stiefel$, compute 
\begin{equation}\label{eq:RiemannianGradMethod}
  \phibf^{(n+1)} = \calR(\phibf^{(n)},\tau_n \etabf^{(n)})
\end{equation}
with the~search direction $\etabf^{(n)}=-\grad\calE(\phibf^{(n)})$ and an~appropriately chosen step size \mbox{$\tau_n>0$}. 


\begin{remark}[Connection to Sobolev gradient flows]
The presented minimization approach for solving the nonlinear eigenvector problem~\eqref{eq:EVPweakComponents} is closely related to the Sobolev gradient flow algorithm studied in~\cite{HenP20} for the Gross-Pitaevskii eigenvalue problem which, as will be shown in Section~\ref{sect:examples}, fits in the given framework with~$N=1$. For general problems with $N\geq 1$, let $\nabla\calE(\phibf)$ denote the~Riesz representative of $\mathrm{D}\calE(\phibf)$ in the Hilbert space $V$ with respect to the inner product $a_\phibf(\,\cdot\,,\cdot\,)$. The operator $\nabla\calE\colon V\to V$ is called the {\em $a_\phibf$-Sobolev gradient of~$\calE$}. It follows from \eqref{eq:DerivativeE} that
$$
	a_\phibf(\nabla\calE(\phibf),\vbf)=\mathrm{D}\calE(\phibf)[\vbf]
	= a_\phibf(\phibf,\vbf) \qquad\text{for all } \vbf \in V
$$
and, hence, $\nabla\calE(\phibf)=\phibf$. 
Given an~initial guess $\phibf(0) \in \Stiefel$, the corresponding dynamical system, also called the $a_\phibf$-Sobolev gradient flow, has the form 
\begin{align}
\label{eq:gradFlow}
	\dot \phibf(t)
	= - P_{\phibf(t)}(\nabla\calE(\phibf(t)))
	= - P_{\phibf(t)}(\phibf(t))
	= - \grad\calE(\phibf(t)).
\end{align}
It can be easily seen that the solution of this system satisfies~$\phibf(t) \in \Stiefel$ for all times. 
Moreover, any stationary solution $\phibf^*\in\Stiefel$ of~\eqref{eq:gradFlow} is the critical point of the energy~$\calE$ in \eqref{eq:energy}, since it satisfies $\grad\calE(\phibf^*)=0$.
\end{remark}

In the following subsection, we show that the iteration \eqref{eq:RiemannianGradMethod} is convergent if the step size $\tau_n$ is sufficiently small. 
%
%
\subsection{Convergence analysis}
To show that the Riemannian gradient scheme~\eqref{eq:RiemannianGradMethod} converges, we restrict ourselves to the case of a~constant step size $\tau_n\equiv\tau$. First, we collect some assumptions which guarantee the convergence as established in Theorem~\ref{th:convergence} below. 
\begin{enumerate}[label={\textbf{(A\arabic*)}}]
\item\label{A1} (Polyak-{\L}ojasiewicz gradient inequality)
For the ground state~$\phibf^*\in\Stiefel$, there exist~$C_*, C_\text{PL}>0$ such that for all 
$\phibf\in\Stiefel$ with $\|\phibf-\phibf^*\|_{a_0}\leq C_*$, it holds 
$$
  \bigl|\calE(\phibf) - \calE(\phibf^{*})\bigr|
  \le C_\text{PL}\, \|\grad \calE(\phibf)\|_{a_{\phibf}}^2. 
$$
\item\label{A2} (Descent inequality)
We say that a given sequence $\{\phibf^{(n)}\}\subset \Stiefel$ satisfies the descent inequality, if there exist $C_\text{D}>0$ and $n_D\in\mathbb{N}$ such that for all $n\geq n_D$,
\begin{equation}\label{eq:energy_decr}
\calE(\phibf^{(n)}) - \calE(\phibf^{(n+1)}) \geq C_\text{D}\, \|\grad\,\calE(\phibf^{(n)})\|_{a_{\phibf^{(n)}}}\|\phibf^{(n+1)}-\phibf^{(n)}\|_{a_0}.
\end{equation}
\item\label{A3} (Step size condition)
For a given sequence $\{\phibf^{(n)}\}\subset \Stiefel$, we say that it satisfies the step size condition, if there exist $C_\text{S}>0$ and $n_S\in\mathbb{N}$ such that for all $n\geq n_S$,
\begin{equation}\label{eq:step_size}
\|\phibf^{(n+1)}-\phibf^{(n)}\|_{a_0} \geq C_\text{S}\, \|\grad\,\calE(\phibf^{(n)})\|_{a_{\phibf^{(n)}}}.
\end{equation}
\end{enumerate}
Under these assumptions, the convergence result and the convergence rate can be established by the following theorem adapted from~\cite{Zha21ppt}. Its proof is a straight-forward modification of \cite[Th.~2.1]{Zha21ppt} and therefore omitted here.
\begin{theorem}\label{th:convergence} 
Let $\{\phibf^{(n)}\}\subset \Stiefel$ be a~sequence generated by the descent gradient method \eqref{eq:RiemannianGradMethod}, which satisfies the descent condition~\textup{\ref{A2}}. 
If there exists a cluster point $\phibf^*\in\Stiefel$ of the sequence that satisfies the Polyak-{\L}ojasiewicz gradient condition~\textup{\ref{A1}}, then $\phibf^*$ is the unique limit point of $\{\phibf^{(n)}\}$ with respect to $\|\cdot\|_{a_0}$. Further, if the sequence~$\{\phibf^{(n)}\}$ fulfills the step size condition~\textup{\ref{A3}}, then there exist constants $c, C>0$ such that the convergence rate can be estimated as 
$$
  \|\phibf^{(n)}-\phibf^*\|_{a_0} 
  \leq C\, e^{-cn}
$$
and it holds $\lim\limits_{n\to\infty} \grad\,\calE(\phibf^{(n)})=0$.
\end{theorem}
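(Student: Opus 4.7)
The plan is to follow the classical Polyak-{\L}ojasiewicz (PL) scheme, adapted to the two norms $\|\cdot\|_{a_0}$ and $\|\cdot\|_{a_\phibf}$ appearing in the assumptions. Set $e_n:=\calE(\phibf^{(n)})-\calE(\phibf^*)$. By \textup{\ref{A2}} the sequence $\{e_n\}$ is eventually non-increasing, and since $\phibf^*$ is a cluster point and $\calE$ is continuous, one deduces $e_n\downarrow 0$; in particular $e_n\ge 0$, so that $\sqrt{e_n}$ is well defined, which is consistent with $\phibf^*$ being the ground state as in \textup{\ref{A1}}.

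The main tool is the elementary identity
\[
\sqrt{e_n}-\sqrt{e_{n+1}}=\frac{e_n-e_{n+1}}{\sqrt{e_n}+\sqrt{e_{n+1}}}\ge\frac{e_n-e_{n+1}}{2\sqrt{e_n}},
\]
which, combined with \textup{\ref{A2}} and the {\L}ojasiewicz bound $\sqrt{e_n}\le \sqrt{C_{\mathrm{PL}}}\,\|\grad\calE(\phibf^{(n)})\|_{a_{\phibf^{(n)}}}$ from \textup{\ref{A1}}, yields the key telescoping estimate
\[
\|\phibf^{(n+1)}-\phibf^{(n)}\|_{a_0}\le \frac{2\sqrt{C_{\mathrm{PL}}}}{C_{\mathrm{D}}}\bigl(\sqrt{e_n}-\sqrt{e_{n+1}}\bigr).
\]
Summation gives $\sum_n\|\phibf^{(n+1)}-\phibf^{(n)}\|_{a_0}<\infty$, so $\{\phibf^{(n)}\}$ is Cauchy in $\|\cdot\|_{a_0}$ and must converge to its unique cluster point $\phibf^*$.

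For the rate, I would combine \textup{\ref{A2}} and \textup{\ref{A3}} to deduce
\[
e_n-e_{n+1}\ge C_{\mathrm D}C_{\mathrm S}\,\|\grad\calE(\phibf^{(n)})\|_{a_{\phibf^{(n)}}}^2\ge \frac{C_{\mathrm D}C_{\mathrm S}}{C_{\mathrm{PL}}}\,e_n,
\]
i.e., a linear contraction $e_{n+1}\le q\,e_n$ with $q\in(0,1)$. Feeding this geometric decay into the telescoping bound of the previous paragraph gives $\|\phibf^{(n)}-\phibf^*\|_{a_0}\le \sum_{k\ge n}\|\phibf^{(k+1)}-\phibf^{(k)}\|_{a_0}\lesssim \sqrt{e_n}\le Ce^{-cn}$. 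The summability $\sum_n\|\grad\calE(\phibf^{(n)})\|^2_{a_{\phibf^{(n)}}}<\infty$ implicit in the same chain finally forces $\grad\calE(\phibf^{(n)})\to 0$.

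The main obstacle is not the algebra, which is routine, but rather justifying the use of the \emph{local} PL inequality \textup{\ref{A1}} along the entire tail of the sequence, since (A1) is only assumed inside the $\|\cdot\|_{a_0}$-ball of radius $C_*$ around $\phibf^*$. I would close this gap with the standard trapping argument: once some $\phibf^{(n_0)}$ is sufficiently close to $\phibf^*$ in $\|\cdot\|_{a_0}$ (possible because $\phibf^*$ is a cluster point and $e_{n_0}$ can be made arbitrarily small), the telescoping estimate above implies $\|\phibf^{(n)}-\phibf^*\|_{a_0}\le \|\phibf^{(n_0)}-\phibf^*\|_{a_0}+\tfrac{2\sqrt{C_{\mathrm{PL}}}}{C_{\mathrm D}}\sqrt{e_{n_0}}$ for every $n\ge n_0$, which can be kept below $C_*$ by choosing $n_0$ large. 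Hence \textup{\ref{A1}} remains applicable at every step from $n_0$ onward and the argument closes. Beyond this trapping step, the proof is norm-level and reproduces \cite[Th.~2.1]{Zha21ppt} almost verbatim, with the only substantive replacements being the Riemannian gradient formula from \eqref{eq:gradE} and the use of the $a_\phibf$-metric in place of the $H$-metric.
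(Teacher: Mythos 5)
Your proposal is correct and takes essentially the same route as the paper's proof, which is exactly the {\L}ojasiewicz-type argument of Theorem~2.1 in the cited preprint of Zhang that the paper invokes and omits: the telescoping bound on $\sqrt{e_n}-\sqrt{e_{n+1}}$ combining (A1) and (A2) to get a Cauchy sequence, the trapping step to keep the tail of the iterates inside the ball where (A1) is valid, and the linear contraction of the energy error from (A2)--(A3) yielding the exponential rate and the summability forcing $\grad\calE(\phibf^{(n)})\to 0$. No gaps beyond the (correctly identified and standard) induction implicit in the trapping argument.
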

It remains to discuss the validity of the three conditions~\ref{A1}--\ref{A3} in the considered setting. Condition~\ref{A1} is an assumption on the energy and depends on the particular application. The special case of the Gross-Pitaevskii equation is discussed in detail in~\cite{Zha21ppt}. 
The other two conditions can be verified under moderate constraints on the step size and suitable regularity assumptions on the energy.
\begin{lemma}[Sufficient condition for~\ref{A2}]\label{lem:condA2}
Consider a~sufficiently small step size \linebreak $0<\tau\le\tau_{\max}$. Assume that the second-order derivative of the energy is bounded in the sense that 
\begin{equation}\label{eq:SecDerBound}
  D^2\calE(\xibf)[\vbf, \wbf]
  \le C_0\, \|\vbf \|_{a_{0}} \|\wbf \|_{a_{0}} 
\end{equation}
for all $\xibf$ in a small neighborhood of the ground state and all~$\vbf, \wbf \in V$. If the iterates~$\phibf^{(n)}$ given by~\eqref{eq:RiemannianGradMethod} with the polar decomposition based retraction~\eqref{eq:retr_pol} are in this neighborhood, then there exists a~constant $C_\text{D}>0$ such that 
the estimate~\eqref{eq:energy_decr} is satisfied.
\end{lemma}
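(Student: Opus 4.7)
The plan is to combine a second-order Taylor expansion of $\calE$ with the retraction error bound of Proposition~\ref{prop:retr_est} and to absorb all higher-order terms into the leading linear contribution. Setting $\Delta^{(n)}:=\phibf^{(n+1)}-\phibf^{(n)}$ and $r^{(n)}:=\calR(\phibf^{(n)},-\tau\grad\calE(\phibf^{(n)}))-(\phibf^{(n)}-\tau\grad\calE(\phibf^{(n)}))$, one has the split $\Delta^{(n)}=-\tau\grad\calE(\phibf^{(n)})+r^{(n)}$. Proposition~\ref{prop:retr_est}, combined with the Gelfand bound $\|\cdot\|_H\le C_H\|\cdot\|_{a_0}\le C_H\|\cdot\|_{a_{\phibf^{(n)}}}$ and the uniform boundedness of $\|\phibf^{(n)}\|_{a_{\phibf^{(n)}}}$ and $\|\grad\calE(\phibf^{(n)})\|_{a_{\phibf^{(n)}}}$ in the assumed neighborhood of $\phibf^*$, then yields a remainder estimate of the form $\|r^{(n)}\|_{a_{\phibf^{(n)}}}\le C_r\,\tau^2\|\grad\calE(\phibf^{(n)})\|_{a_{\phibf^{(n)}}}^2$.

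Next, I would apply Taylor's formula: for some $\xibf$ on the segment between $\phibf^{(n)}$ and $\phibf^{(n+1)}$,
\[
\calE(\phibf^{(n+1)})-\calE(\phibf^{(n)})=\mathrm{D}\calE(\phibf^{(n)})[\Delta^{(n)}]+\tfrac12\,\mathrm{D}^2\calE(\xibf)[\Delta^{(n)},\Delta^{(n)}].
\]
For the linear part, equation~\eqref{eq:DerivativeE} and the $a_\phibf$-orthogonality~\eqref{property:projection2} applied with $\vbf=\phibf^{(n)}$ and $\etabf=\grad\calE(\phibf^{(n)})\in T_{\phibf^{(n)}}\Stiefel$ give $a_{\phibf^{(n)}}(\phibf^{(n)},\grad\calE(\phibf^{(n)}))=\|\grad\calE(\phibf^{(n)})\|_{a_{\phibf^{(n)}}}^2$. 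Hence the linear contribution equals $-\tau\|\grad\calE(\phibf^{(n)})\|_{a_{\phibf^{(n)}}}^2+a_{\phibf^{(n)}}(\phibf^{(n)},r^{(n)})$, and the last summand is of order $\tau^2\|\grad\calE(\phibf^{(n)})\|_{a_{\phibf^{(n)}}}^2$ by Cauchy–Schwarz and the remainder bound. The quadratic term is controlled via hypothesis~\eqref{eq:SecDerBound} by $\tfrac{C_0}{2}\|\Delta^{(n)}\|_{a_0}^2\le\tfrac{C_0}{2}\|\Delta^{(n)}\|_{a_{\phibf^{(n)}}}^2$, and the triangle inequality together with the remainder bound provides $\|\Delta^{(n)}\|_{a_{\phibf^{(n)}}}\le\tau(1+C_r\tau\|\grad\calE(\phibf^{(n)})\|_{a_{\phibf^{(n)}}})\|\grad\calE(\phibf^{(n)})\|_{a_{\phibf^{(n)}}}$, so this term also contributes at most $O(\tau^2\|\grad\calE(\phibf^{(n)})\|_{a_{\phibf^{(n)}}}^2)$.

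Combining the two estimates and choosing $\tau_{\max}$ so small that all collected $O(\tau^2)$ contributions are dominated by half of the leading linear term, one obtains $\calE(\phibf^{(n)})-\calE(\phibf^{(n+1)})\ge\tfrac{\tau}{2}\|\grad\calE(\phibf^{(n)})\|_{a_{\phibf^{(n)}}}^2$. On the other hand, the same triangle estimate implies $\|\Delta^{(n)}\|_{a_0}\le\|\Delta^{(n)}\|_{a_{\phibf^{(n)}}}\le 2\tau\|\grad\calE(\phibf^{(n)})\|_{a_{\phibf^{(n)}}}$ in the same regime, and dividing the two yields~\eqref{eq:energy_decr} with an explicit constant such as $C_\text{D}=\tfrac{1}{4}$. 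The main obstacle is keeping the norm bookkeeping uniform in $n$: three different norms ($a_{\phibf^{(n)}}$, $a_0$, $H$) appear simultaneously, and the uniform equivalence $c_E\|\cdot\|_{a_{\phibf^{(n)}}}\le\|\cdot\|_{a_0}\le\|\cdot\|_{a_{\phibf^{(n)}}}$ together with the Gelfand inclusion must be invoked with constants that do not degenerate along the iteration, which is guaranteed exactly by the neighborhood assumption on $\phibf^{(n)}$.
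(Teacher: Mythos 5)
Your proposal is correct and follows essentially the same route as the paper's proof: a Taylor expansion of $\calE$ at $\phibf^{(n)}$, the second-order retraction bound of Proposition~\ref{prop:retr_est} to control the remainder $r^{(n)}$, uniform bounds in the neighborhood of the ground state, and absorption of all $O(\tau^2)$ terms into the leading $-\tau\|\grad\calE(\phibf^{(n)})\|_{a_{\phibf^{(n)}}}^2$ contribution, followed by the norm comparison $\|\Delta^{(n)}\|_{a_0}\lesssim\tau\|\grad\calE(\phibf^{(n)})\|_{a_{\phibf^{(n)}}}$. The only cosmetic differences are that you derive the key identity $a_{\phibf^{(n)}}(\phibf^{(n)},\grad\calE(\phibf^{(n)}))=\|\grad\calE(\phibf^{(n)})\|_{a_{\phibf^{(n)}}}^2$ from the projection property~\eqref{property:projection2} rather than from the normal-space decomposition $\etabf^{(n)}=-\phibf^{(n)}+\psibf^{(n)}$, and that you track generic $O(\tau^2)$ constants where the paper spells out $C_1,\dots,C_5$ explicitly.
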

\begin{proof}	
For $\etabf^{(n)}=-\grad\calE(\phibf^{(n)})=-\phibf^{(n)}+\psibf^{(n)}$ with 
$\psibf^{(n)}\in (T_{\phibf^{(n)}}\Stiefel)_{a}^\perp$,
we obtain 
$a_{\phibf^{(n)}}(\phibf^{(n)},\etabf^{(n)})=-a_{\phibf^{(n)}}(\etabf^{(n)},\etabf^{(n)}) =-\|\etabf^{(n)}\|_{a_{\phibf^{(n)}}}^2$.
Further, it follows from Proposition~\ref{prop:retr_est} and
\begin{equation}\label{eq:RelDifGrad}
	\phibf^{(n+1)}-\phibf^{(n)}
	= \calR\bigl(\phibf^{(n)},\tau\,\etabf^{(n)}\bigr) - \big(\phibf^{(n)} + \tau\,\etabf^{(n)} \big) + \tau\,\etabf^{(n)}
\end{equation}
that
\begin{align}
  \|\phibf^{(n+1)}-\phibf^{(n)}\|_{a_0}
	& \le \|\phibf^{(n+1)}-\phibf^{(n)}\|_{a_{\phibf^{(n)}}} \nonumber\\
  & \le \tau\, \|\etabf^{(n)}\|_{a_{\phibf^{(n)}}} 
  + \tau^2\,\|\phibf^{(n)}+ \tau\, \etabf^{(n)}\|_{a_{\phibf^{(n)}}}\|\etabf^{(n)}\|_H^2. \label{eq:est1}
\end{align}
Using the expression $\etabf^{(n)}=-\phibf^{(n)}+\calA_{\phibf^{(n)}}^{-1}\phibf^{(n)}\out{\phibf^{(n)}}{\calA_{\phibf^{(n)}}^{-1}\phibf^{(n)}}^{-1}$ and the coercivity and boundedness of the bilinear form $a_{\phibf^{(n)}}$, we can show that there exists a~constant $C_1>0$ such that $\|\etabf^{(n)}\|_{a_{\phibf^{(n)}}}\leq C_1 \|\phibf^{(n)}\|_{a_{\phibf^{(n)}}}$. Then taking into account that the iterates $\phibf^{(n)}$ are in a~small neighborhood of the ground state, we estimate
\begin{equation}\label{eq:UniformBound}
\|\phibf^{(n)}\|_{a_{\phibf^{(n)}}}\le C_2, \quad
\|\etabf^{(n)}\|_{a_{\phibf^{(n)}}}\le C_1C_2, \quad 
\|\phibf^{(n)}+ \tau\, \etabf^{(n)}\|_{a_{\phibf^{(n)}}} \le (1+\tau_{\max}C_1)\,C_2
\end{equation}
with a constant $C_2>0$ independent of $\phibf^{(n)}$.

A Taylor expansion of $\calE(\phibf^{(n+1)})$ at $\phibf^{(n)}$ yields
\begin{align*}
\calE(\phibf^{(n+1)}) 
=\calE(\phibf^{(n)}) + {\rm D}\calE(\phibf^{(n)})[\phibf^{(n+1)}-\phibf^{(n)}] 
+\frac12\, {\rm D}^2\calE(\xibf)[\phibf^{(n+1)}-\phibf^{(n)}, \phibf^{(n+1)}-\phibf^{(n)}] 
\end{align*}
for some $\xibf$ in the neighborhood of the ground state. Estimating the derivative  
\begin{align*}
{\rm D}\calE(\phibf^{(n)})&[\phibf^{(n+1)}-\phibf^{(n)}]
= a_{\phibf^{(n)}}\bigl(\phibf^{(n)},\phibf^{(n+1)}-\phibf^{(n)}\bigr)   \\
&\le \tau\,a_{\phibf^{(n)}} \bigl(\phibf^{(n)},\etabf^{(n)}\bigr) 
+ \|\phibf^{(n)}\|_{a_{\phibf^{(n)}}} 
\| \calR\bigl(\phibf^{(n)},\tau\,\etabf^{(n)}\bigr) - \big(\phibf^{(n)} + \tau\,\etabf^{(n)} \big) \|_{a_{\phibf^{(n)}}} \\
&\le -\tau\, \|\etabf^{(n)}\|^2_{a_{\phibf^{(n)}}} + C_H^2\,\tau^2\,  \|\phibf^{(n)}\|_{a_{\phibf^{(n)}}} 
\|\phibf^{(n)} + \tau\, \etabf^{(n)}\|_{a_{\phibf^{(n)}}} \|\etabf^{(n)}\|_{a_{\phibf^{(n)}}}^2 
\end{align*}
and using \eqref{eq:SecDerBound} together with \eqref{eq:UniformBound}, 
%
%
we conclude that
\begin{align*}
\calE(\phibf^{(n)}) - \calE(\phibf^{(n+1)})
&= - {\rm D}\calE(\phibf^{(n)})[\phibf^{(n+1)}-\phibf^{(n)}] 
- \frac12\, {\rm D}^2\calE(\xibf)[\phibf^{(n+1)}\!-\!\phibf^{(n)}, \phibf^{(n+1)}\!-\!\phibf^{(n)}] \\
%
%
&\ge \tau\, \|\etabf^{(n)}\|^2_{a_{\phibf^{(n)}}}
- C_H^2\,\tau^2\, \|\phibf^{(n)}\|_{a_{\phibf^{(n)}}} 
\|\phibf^{(n)} + \tau\, \etabf^{(n)}\|_{a_{\phibf^{(n)}}} \|\etabf^{(n)}\|^2_{a_{\phibf^{(n)}}} \\
&\qquad -2\,C_0\, \tau^2\, \|\etabf^{(n)}\|^2_{a_{\phibf^{(n)}}}  
\!- 2\,C_0\,C_H^2\, \tau^4\,\|\phibf^{(n)}+ \tau \etabf^{(n)}\|^2_{a_{\phibf^{(n)}}} \|\etabf^{(n)}\|^4_{a_{\phibf^{(n)}}} \\
%
&\ge \tau\, \|\etabf^{(n)}\|^2_{a_{\phibf^{(n)}}}\ 
\big( 1-\tau_{\max}\, C_3 -\tau_{\max}^3 C_4 \big)
\end{align*}
with $C_3=C_H^2C_2^2(1+\tau_{\max}C_1)+2\,C_0$ and $C_4=2\,C_0 C_H^2 C_1^2C_2^4(1+\tau_{\max}C_1)^2$.
Finally, it follows from \eqref{eq:est1} and \eqref{eq:UniformBound} that 
$$
\tau\,\|\etabf^{(n)}\|_{a_{\phibf^{(n)}}} \ge \frac{\|\phibf^{(n+1)}-\phibf^{(n)}\|_{a_0}}{1+\tau_{\max}C_5},
$$
with $C_5=C_H^2C_1C_2^2(1+\tau_{\max}C_1)$. Thus, we obtain the estimate~\eqref{eq:energy_decr} for the sufficiently small step size 
$0<\tau \le \tau_{\max}$ and a~constant $C_\text{D}>0$ depending on $\tau_{\max}$ and the other constants only. 
\end{proof}	
\begin{lemma}[Sufficient condition for~\ref{A3}]\label{lem:condA3}
Consider a~sufficiently small step size\linebreak $0<\tau_{\min}\le\tau\le\tau_{\max}$. If the iterates~$\phibf^{(n)}$ given by~\eqref{eq:RiemannianGradMethod} with the polar decomposition based retraction~\eqref{eq:retr_pol} are in the neighborhood of the ground state, then there exists a~constant $C_\text{S}>0$ such that 
the estimate \eqref{eq:step_size} is satisfied.
\end{lemma}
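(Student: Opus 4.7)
The plan is to mirror the strategy of Lemma~\ref{lem:condA2}, but now to establish a \emph{lower} bound on $\|\phibf^{(n+1)}-\phibf^{(n)}\|_{a_0}$ in terms of $\|\grad\calE(\phibf^{(n)})\|_{a_{\phibf^{(n)}}}$. I would start from the decomposition in \eqref{eq:RelDifGrad}, writing $\phibf^{(n+1)}-\phibf^{(n)} = \tau\,\etabf^{(n)} + r_n$ with the retraction error $r_n := \calR(\phibf^{(n)},\tau\etabf^{(n)}) - (\phibf^{(n)}+\tau\etabf^{(n)})$, which is exactly what Proposition~\ref{prop:retr_est} is tailored to control. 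The reverse triangle inequality combined with the norm equivalences $c_E\|\cdot\|_{a_{\phibf^{(n)}}} \le \|\cdot\|_{a_0} \le \|\cdot\|_{a_{\phibf^{(n)}}}$ from the Gelfand setting then yields
\[
\|\phibf^{(n+1)}-\phibf^{(n)}\|_{a_0} \;\ge\; \tau\,c_E\,\|\etabf^{(n)}\|_{a_{\phibf^{(n)}}} \;-\; \|r_n\|_{a_{\phibf^{(n)}}}.
\]

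Next, I would apply Proposition~\ref{prop:retr_est} to obtain $\|r_n\|_{a_{\phibf^{(n)}}} \le \tau^2\,\|\phibf^{(n)}+\tau\,\etabf^{(n)}\|_{a_{\phibf^{(n)}}}\,\|\etabf^{(n)}\|_H^2$, and invoke the Gelfand embedding $\|\etabf^{(n)}\|_H \le C_H\|\etabf^{(n)}\|_{a_{\phibf^{(n)}}}$ together with the uniform estimates in \eqref{eq:UniformBound} already secured in the proof of Lemma~\ref{lem:condA2}. Because $\|\etabf^{(n)}\|_H^2 \le C_H^2\|\etabf^{(n)}\|_{a_{\phibf^{(n)}}}^2 \le C_H^2 C_1 C_2\,\|\etabf^{(n)}\|_{a_{\phibf^{(n)}}}$, exactly one factor of $\|\etabf^{(n)}\|_{a_{\phibf^{(n)}}}$ can be extracted, and after collecting constants the bound takes the form
\[
\|\phibf^{(n+1)}-\phibf^{(n)}\|_{a_0} \;\ge\; \bigl(\tau\,c_E - \tau^2\, C_6\bigr)\,\|\etabf^{(n)}\|_{a_{\phibf^{(n)}}}
\]
with $C_6 := C_H^2 C_1 C_2^2(1+\tau_{\max}C_1)$, which is independent of~$n$.

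To conclude, I would shrink $\tau_{\max}$ so that $\tau_{\max}\,C_6 \le c_E/2$; since the parabola $\tau \mapsto \tau c_E - \tau^2 C_6$ is then increasing on $[\tau_{\min},\tau_{\max}]$, this forces $\tau c_E - \tau^2 C_6 \ge \tau\,c_E/2 \ge \tau_{\min}\,c_E/2 =: C_\text{S}>0$ uniformly in $\tau\in[\tau_{\min},\tau_{\max}]$. Since $\etabf^{(n)} = -\grad\calE(\phibf^{(n)})$, this is precisely~\eqref{eq:step_size}. The only real subtlety is a mild bootstrapping issue: $C_6$ itself depends on $\tau_{\max}$ through the factor $(1+\tau_{\max}C_1)$ inherited from~\eqref{eq:UniformBound}, which is resolved by first fixing a provisional cap (e.g.~$\tau_{\max}\le 1$) to freeze $C_6$ and then sharpening $\tau_{\max}$ as required. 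A notable structural point is that a strictly positive lower bound $\tau_{\min}$ is genuinely needed here --- without it, $C_\text{S}$ collapses to zero --- which explains why~\ref{A3} demands two-sided step-size control, in contrast to the one-sided condition~\ref{A2}.
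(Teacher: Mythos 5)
Your proof is correct and follows essentially the same route as the paper: it uses the decomposition \eqref{eq:RelDifGrad}, the second-order retraction bound of Proposition~\ref{prop:retr_est}, the uniform bounds \eqref{eq:UniformBound}, and the norm equivalence $c_E\|\cdot\|_{a_{\phibf^{(n)}}}\le\|\cdot\|_{a_0}$ to absorb the retraction error for small $\tau_{\max}$; indeed your $C_6$ coincides with the paper's $C_5$, and the paper obtains the marginally sharper bound $c_E\,(1-\tau C_5)\,\tau\,\|\etabf^{(n)}\|_{a_{\phibf^{(n)}}}$ only because it multiplies by $c_E$ before, rather than after, subtracting the error term. Your explicit handling of the $\tau_{\max}$-dependence of the constants and the remark on the necessity of $\tau_{\min}$ are sound refinements of what the paper leaves implicit in the phrase ``sufficiently small $\tau_{\max}$.''
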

\begin{proof}	
Using \eqref{eq:RelDifGrad} and \eqref{eq:UniformBound}, we estimate
\begin{align*}
\tau\,\|\etabf^{(n)}\|_{a_{\phibf^{(n)}}} 
&\le \|\phibf^{(n+1)}-\phibf^{(n)}\|_{a_{\phibf^{(n)}}} +\tau^2\,\|\phibf^{(n)} + \tau\, \etabf^{(n)}\|_{a_{\phibf^{(n)}}} \|\etabf^{(n)}\|^2_H \\
&\le \|\phibf^{(n+1)}-\phibf^{(n)}\|_{a_{\phibf^{(n)}}} +C_5\tau^2\,\|\etabf^{(n)}\|_{a_{\phibf^{(n)}}} .
\end{align*}
Therefore, a step size restriction $0<\tau_{\min}\le \tau\le\tau_{\max}$ with sufficiently small $\tau_{\max}$ yields 
\begin{align*}
\|\phibf^{(n+1)}-\phibf^{(n)}\|_{a_0}
& \ge c_E\, \|\phibf^{(n+1)}-\phibf^{(n)}\|_{a_{\phibf^{(n)}}} 
  \ge c_E\,(1-\tau C_5)\, \tau\, \|\etabf^{(n)}\|_{a_{\phibf^{(n)}}} \\
& \ge c_E\,(1-\tau_{\max}C_5)\,\tau_{\min} \|\etabf^{(n)}\|_{a_{\phibf^{(n)}}}
=C_\text{S}\,\|\grad\,\calE(\phibf^{(n)})\|_{a_{\phibf^{(n)}}}
\end{align*}
with $C_\text{S}=\tau_{\min}c_E(1-\tau_{\max}C_5)>0$.
\end{proof}
\begin{remark}
Note that in Lemma~\ref{lem:condA2} and Lemma~\ref{lem:condA3}, the polar decomposition based retraction can be replaced by the $qR$-based retraction defined in \eqref{eq:retr_qR} or any other second-order bounded retraction. 
\end{remark}
%
%
\subsection{Step size control with a non-monotone line search}\label{sect:Riemannian:lineSearch}
In order to accelerate the convergence of the Riemannian gradient descent method~\eqref{eq:RiemannianGradMethod}, we determine the step size by employing the non-monotone line search algorithm~\cite{ZhaW04} combined with the alternating Barzilai-Borwein step size strategy as proposed in~\cite{WenY13}. The resulting Riemannian gradient descent method is presented in Algorithm~\ref{alg:nonmonGradient}.   
\begin{algorithm}[h!] 
	\setstretch{1.15}
	\caption{Riemannian gradient descent method with non-monotone line search}
	\label{alg:nonmonGradient}
	\begin{algorithmic}[1]
		\State{\textbf{Input:} energy~$\calE$, retraction~$\calR$, initial guess~$\phibf^{(0)}\in\Stiefel$, 
			$c_0 = \calE(\phibf^{(0)})$, $q_0=1$,\\ \hspace{3.2em} parameters~$\alpha \in [0,1]$,
			$\beta, \delta\in(0,1)$, $0<\gamma_{\min}<\gamma_{\max}$, $\gamma_0>0$}
		\vspace{0.5em}	
		\For{$n=0,1,2,\dots$}
		\State{Compute a~search direction~$\etabf^{(n)}$ as an~approximation of $-\grad\calE(\phibf^{(n)})$.}
		\If{$n>0$} \State{Compute a trial step size 
			$$
			\gamma_n = \left\{
			\begin{array}{cl}
			\frac{(\sbf^{(n)},\sbf^{(n)})_H}{|(\sbf^{(n)},\ybf^{(n)})_H|}& \mbox{for odd}~~n,\\[2mm]
			\frac{|(\sbf^{(n)},\ybf^{(n)})_H|}{(\ybf^{(n)},\ybf^{(n)})_H}& \mbox{for even}~~n,
			\end{array}
			\right.
			$$
			\hspace*{11mm} where $\sbf^{(n)} = \phibf^{(n)} - \phibf^{(n-1)}$ and 
			$\ybf^{(n)} = \etabf^{(n-1)} - \etabf^{(n)}$.}
		\EndIf
		\State{Set $\gamma_n=\max(\gamma_\mathrm{min},\min(\gamma_n,\gamma_\mathrm{max}))$.}
		\State{Find the smallest $k\in\N$ such that~$\tau_n=\gamma_n\delta^k$ satisfies the non-monotone condition $$\calE\bigl(\calR(\phibf^{(n)},\tau_n\etabf^{(n)})\bigr)\leq c_n
			-\beta\, \tau_n\, a_{\phibf^{(n)}}( \etabf^{(n)},\etabf^{(n)}).$$ 	
		}
		\State{Set $\phibf^{(n+1)} = \calR(\phibf^{(n)},\tau_n \etabf^{(n)})$.}
		\State{Compute $q_{n+1}=\alpha q_{n} +1$ and  $c_{n+1}=\bigl(1-\tfrac{1}{q_{n+1}}\bigr)c_n+\tfrac{1}{q_{n+1}}\calE(\phibf^{(n+1)})$.
		}
		\EndFor
		\vspace{0.5em}	
		\State \textbf{Output:} sequence of iterates $\{\phibf^{(n)}\}$
	\end{algorithmic}
\end{algorithm}

The following theorem establishes that a convergent sequence generated by this algorithm yields a~stationary point.
\begin{theorem}
	Let $\{\phibf^{(n)}\}$ be a~sequence generated by Algorithm~\textup{\ref{alg:nonmonGradient}}. Then every accumulation point $\phibf^*$ of this sequence is a~critical point of $\calE$, i.e., we have $\grad\calE(\phibf^*) = 0$.
\end{theorem}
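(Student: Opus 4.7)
The plan is to follow the classical non-monotone line-search analysis of Zhang–Hager~\cite{ZhaW04}, transplanted from the Euclidean setting to our Riemannian one. First I would establish, by induction on $n$, that $\calE(\phibf^{(n)}) \le c_n$ and that $\{c_n\}$ is non-increasing. The non-monotone acceptance condition together with the induction hypothesis yields
\[
\calE(\phibf^{(n+1)}) \;\le\; c_n - \beta\tau_n\|\etabf^{(n)}\|^2_{a_{\phibf^{(n)}}} \;\le\; c_n,
\]
and inserting this into $c_{n+1} = (1-q_{n+1}^{-1})c_n + q_{n+1}^{-1}\calE(\phibf^{(n+1)})$ gives $c_{n+1} \le c_n$ and $\calE(\phibf^{(n+1)}) \le c_{n+1}$, closing the induction. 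The same algebra produces the per-step descent
\[
c_n - c_{n+1} \;\geq\; \frac{\beta\tau_n}{q_{n+1}}\,\|\etabf^{(n)}\|^2_{a_{\phibf^{(n)}}}.
\]
Using that $\calE$ is bounded below on $\Stiefel$ (by the standing assumptions in Section~\ref{sect:generalModel}) and that $\{q_{n+1}\}$ is bounded for $\alpha\in[0,1)$, telescoping yields the summability $\sum_{n=0}^\infty \tau_n\,\|\etabf^{(n)}\|^2_{a_{\phibf^{(n)}}} < \infty$.

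Let $\phibf^*$ be an accumulation point with $\phibf^{(n_j)} \to \phibf^*$. Passing to a further subsequence, I may assume $\etabf^{(n_j)} \to \etabf^* = -\grad\calE(\phibf^*)$ by continuity of the Riemannian gradient (formula \eqref{eq:gradE}) and of the search-direction approximation. I would then split into two cases. In the case $\liminf_j \tau_{n_j} > 0$, the summability immediately gives $\|\etabf^{(n_j)}\|_{a_{\phibf^{(n_j)}}} \to 0$ along a sub-subsequence, and the uniform equivalence $c_E\|\cdot\|_{a_\phibf}\le\|\cdot\|_{a_0}\le\|\cdot\|_{a_\phibf}$ in a neighbourhood of $\phibf^*$ forces $\etabf^* = 0$, i.e.~$\grad\calE(\phibf^*) = 0$.

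In the complementary case $\tau_{n_j} \to 0$, the backtracking must have produced at least one rejection for $j$ large, so that the trial $\tau_{n_j}/\delta$ violated the line-search test:
\[
\calE\!\bigl(\calR(\phibf^{(n_j)},(\tau_{n_j}/\delta)\etabf^{(n_j)})\bigr) \;>\; c_{n_j} - \beta(\tau_{n_j}/\delta)\|\etabf^{(n_j)}\|^2_{a_{\phibf^{(n_j)}}}.
\]
Using $c_{n_j}\ge \calE(\phibf^{(n_j)})$, dividing by $\tau_{n_j}/\delta$ and passing to the limit via a first-order Taylor expansion of $t\mapsto \calE(\calR_{\phibf^{(n_j)}}(t\etabf^{(n_j)}))$ at $t=0$ (valid because $\calR$ is smooth with $\mathrm{D}\calR_{\phibf}(\bm{0}_\phibf)=\id$ by Definition~\ref{def:retraction}) yields
\[
\mathrm{D}\calE(\phibf^*)[\etabf^*] \;\ge\; -\beta\,\|\etabf^*\|^2_{a_{\phibf^*}}.
\]
Combined with $\etabf^* = -\grad\calE(\phibf^*)$ and the identity $\mathrm{D}\calE(\phibf^*)[-\grad\calE(\phibf^*)] = -\|\grad\calE(\phibf^*)\|^2_{a_{\phibf^*}}$ (which follows from \eqref{eq:DerivativeE} and the fact that $\phibf^* - \grad\calE(\phibf^*)$ lies in $(T_{\phibf^*}\Stiefel)_a^\perp$), the strict inequality $\beta<1$ again forces $\grad\calE(\phibf^*)=0$.

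The main technical obstacle I anticipate is uniform control of the $\phibf$-dependent metric $a_\phibf$ and of the quadratic remainder in the Taylor expansion of $\calE\circ\calR_{\phibf^{(n_j)}}$ along the subsequence. Once these are handled on a bounded neighbourhood of $\phibf^*$, exploiting Assumption~\ref{ass:aPhi} together with the second-order boundedness of the retraction from Proposition~\ref{prop:retr_est} or~\ref{prop:retr_est_qR}, the argument goes through; a secondary subtle point is to make precise what regularity on the inexact search direction $\etabf^{(n)}\approx-\grad\calE(\phibf^{(n)})$ is needed so that $\etabf^{(n_j)}\to-\grad\calE(\phibf^*)$ whenever $\phibf^{(n_j)}\to\phibf^*$.
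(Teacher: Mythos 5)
Your proof is correct and follows essentially the same route as the paper: the paper's own proof is a one-line deferral to \cite[Th.~3.3]{HuLWY20} (noting only that the retractions of Section~\ref{sec:retraction} are globally defined), and that cited theorem is precisely the Zhang--Hager non-monotone line-search argument you have written out in the Riemannian setting, with the same two-case split on $\liminf_j \tau_{n_j}$ and the same use of $\gamma_n\geq\gamma_{\min}$ to force a rejected trial step when $\tau_{n_j}\to 0$. The two caveats you flag yourself --- boundedness of $q_n$ requires $\alpha<1$, and one must specify what is assumed of the inexact direction $\etabf^{(n)}$ so that $\etabf^{(n_j)}\to-\grad\calE(\phibf^*)$ --- are inherited by the paper's citation-based proof as well, so they do not constitute a gap relative to the paper.
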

\begin{proof}
	Since the retractions considered in Section~\ref{sec:retraction} are globally defined, the result can be proved analogously to \cite[Th.~3.3]{HuLWY20}.
\end{proof}
%
%
%
\subsection{Inexact gradient descent schemes}
In this subsection, we propose an inexact gradient descent method which significantly reduces the computational complexity of the iteration~\eqref{eq:RiemannianGradMethod}.

First, we establish a connection of our minimization method to the DCM method considered in~\cite{SchRNB09}. Let $\phibf^*\in\Stiefel$ be a~critical point of $\calE$, 
i.e., \mbox{$\grad\calE(\phibf^*)=0$}. Then \eqref{eq:gradE} yields%
\begin{equation}\label{eq:stationary}
  \calA_{\phibf^*}\, \phibf^* 
  = \phibf^*\, \out{\phibf^*}{\calA_{\phibf^*}^{-1}\phibf^*}^{-1}.
\end{equation}
This equation further implies
\begin{equation}\label{e:matequiv}
  \out{\phibf^*}{\calA_{\phibf^*}\phibf^*} 
  = \out{\phibf^*}{\calA_{\phibf^*}^{-1}\phibf^*}^{-1}
\end{equation}
and, hence, \eqref{eq:stationary} can be rewritten as $\calA_{\phibf^*} \phibf^*=\phibf^*\,\out{\phibf^*}{\calA_{\phibf^*}\phibf^*}$. In the DCM method considered in~\cite{SchRNB09}, the search direction is taken as 
\begin{align}
\label{etan:precond:precondDCM}
  -\calB_\phibf^{-1} \big(\calA_\phibf \phibf - \phibf\, \out{\phibf}{\calA_\phibf \phibf} \big), 
\end{align}
where $\calB_\phibf$ is a~given preconditioner. Without $\calB_\phibf$, this leads to the Riemannian gradient descent method in the Hilbert metric~$g_H$, which usually shows slow convergence. Considering the preconditioner~$\calB_\phibf = \calA_\phibf$ yields the search direction $-\phibf + \calA_\phibf^{-1}\phibf\, \out{\phibf}{\calA_\phibf \phibf}$. Due to \eqref{e:matequiv}, this search direction is asymptotically equivalent to 
$$
\etabf=-\grad\calE(\phibf) = -\phibf + \calA_\phibf^{-1}\phibf\, \out{\phibf}{\calA_\phibf^{-1}\phibf}^{-1}.
$$ 
This observation shows that a suitably preconditioned DCM admits a near gradient descent structure in the novel metric $g_a$. 

The computation of both search directions requires the solution of a system involving the operator~$\calA_\phibf$ in each step but different linear combinations of the outcome are used. For the DCM, it is known that an approximation of ~$\calA_\phibf$ is sufficient for convergence in practice. In this spirit, we may also use the inexact gradient. This consideration motivates to use 
\begin{align}
\label{etan:precond:RGD}
	-\grad\calE(\phibf)
	\approx -\phibf + \calB^{-1}_\phibf\phibf\, \out{\phibf}{\calB^{-1}_\phibf\phibf}^{-1}
\end{align}
as a search direction. Here, $\calB_\phibf\approx \calA_\phibf$ is a suitable preconditioner that realizes, e.g., a few iterations of a preconditioned iterative solver for $\calA_\phibf^{-1}\phibf$ with starting value 
$$
\phibf\, \out{\phibf}{\calA_{\phibf} \phibf}^{-1} \approx \calA_{\phibf}^{-1}\phibf.
$$
The error of the proposed starting value is roughly as accurate as the current approximation of the wavefunction in the iteration. Hence, after only a few steps of the preconditioned iterative solver the residual of the linear system is substantially smaller than the current error. In a convergent iteration, sufficiently many (inner) iterations will guarantee that the resulting direction is a descent direction, cf.~the numerical experiments of Section~\ref{sect:examples}. The control of the number of iterations required to ensure a descent could be integrated into the method. 
%
%

\section{Examples}\label{sect:examples}
In this final section, we present two examples which fit in the framework of Section~\ref{sect:generalModel}. Moreover, the efficiency of the proposed algorithm (and its preconditioned variants) are illustrated in a number of numerical experiments. 
%
%
\subsection{Gross-Pitaevskii eigenvalue problem}
In the special case $N=1$, we seek an~eigenfunction $u\in V := H^1_0(\Omega)$ satisfying the normalization constraint $\|u\|_{L^2(\Omega)} = 1$. 
Hence, the minimization takes place on the unit sphere $\mathbb{S}=\{v\in V\ :\ \|u\|_{L^2(\Omega)} = 1\}$. 
A well-known example, which fits in this framework, is the {\em Gross-Pitaevskii eigenvalue problem}. In the classical form, this reads
\[
	-\Delta u + V_\text{ext}\, u + \kappa\, |u|^2 u = \lambda\, u 
\] 
for some non-negative and space-dependent external potential~$V_\text{ext}\ge0$ and a constant $\kappa\ge 0$ regulating the strength of the nonlinearity. Here, the bilinear form $a_u\colon V\times V\to \R$ is given by  
\[
	a_u(v,w)
	:= \int_{\Omega} \nabla v\cdot \nabla w + V_\text{ext}\, v\, w + \kappa\, |u|^2 v\, w \dx. 
\]
The linear part~$a_0$, which contains the weak Laplacian and the potential, defines an inner product on $V$. For the nonlinear part, we set~$\gamma(\rho(u)) = \gamma(|u|^2) := \kappa\, |u|^2$, i.e., a constant times the density of~$u$. Hence, for any $u\in V$, the bilinear form~$a_u$ defines an inner product on $V$ and Assumption~\ref{ass:aPhi} is satisfied. Due to $\Gamma(\rho) = \kappa \int_0^\rho t \dt = \frac 12 \kappa \rho^2$, the corresponding energy has the form 
\[	
	\calE(u)
	= \frac12\, a_0(u,u) + \frac12\, \int_{\Omega} \Gamma(\rho(u)) \dx
	= \frac12\, \int_{\Omega} \|\nabla u\|^2 + V_\text{ext}\, |u|^2 + \frac\kappa2\, |u|^4 \dx.
\]
The assumed property that $\calE$ does not change if the argument is multiplied by an~orthogonal matrix translates in the case $N=1$ to $\calE(\pm u)=\calE(u)$, which is clearly satisfied. As before, we are interested in the ground state, i.e., the state of minimal energy. For the Gross-Pitaevskii eigenvalue problem, the ground state coincides with the eigenfunction that corresponds to the smallest eigenvalue.   

Following the procedure presented in Section~\ref{sect:generalModel}, we have $\out{u}{v}=(u,v)_H=(u,v)_{L^2(\Omega)}$ and $T_u\,\mathbb{S} =\{v\in V\ :\ (u,v)_H = 0\}$. Hence, the normal space is one-dimensional, and~\eqref{property:Psi} reduces to find $\psi \in V$ such that 
\begin{align*}
	a_u(\psi, v) = 0 \quad\text{for all } v\in T_u\,\mathbb{S}, \qquad
	(\psi, u)_H = 1.
\end{align*}
Written as a saddle point problem, we seek $(\psi,\mu) \in V\times \R$ such that
\begin{subequations}
\label{eq:psiGP}
\begin{align}
	a_u(\psi, v) &= \mu\, (u, v)_H \hspace{1.8em}\text{ for all } v\in V, \label{eq:psiGP1}\\
	(\psi, u)_H &= 1.\label{eq:psiGP1}
\end{align}
\end{subequations}
The resulting projection applied to $u$ reads~$P_u(u) = u - \psi$. For $N=1$, the polar decomposition based retraction from Section~\ref{sec:retraction:polar} as well as the $qR$-based retraction from Section~\ref{sec:retraction:qR} simply equal a  $L^2$-normalization. This then leads to the following iteration scheme: Given $u^{(n)}\in\mathbb{S}$, compute $\psi^{(n)} = \psi(u^{(n)})$ by solving \eqref{eq:psiGP} with $u=u^{(n)}$ and set 
\[
	\tilde u^{(n+1)} := (1-\tau_n) u^{(n)} + \tau_n \psi^{(n)},\qquad
	u^{(n+1)} := \frac{\tilde u^{(n+1)}}{\| \tilde u^{(n+1)}\|_{L^2(\Omega)}}.
\]
Note that this is exactly the damped GF$a_z$ method introduced in~\cite{HenP20}, which is labeled $A$-method in \cite{AltHP21}. Moreover, in the special case $\tau_n\equiv 1$, this iteration is the straight-forward generalization of the {\em inverse power method} to the nonlinear setting. We refer to the aforementioned original papers as well as to \cite{AltP19,altmann2020localization} for numerical experiments that demonstrate the competitiveness of the method with established schemes and its ability to capture relevant physical phenomena such as the exponential localization of eigenstates. The guaranteed energy decay of the method has also been exploited explicitly in \cite{doi:10.1137/20M1332864}.
%
%
%
\subsection{Kohn-Sham model}
A second example, which is covered by this paper, is the Kohn-Sham model~\cite{KohS65} and, in particular, the model based on the {\em density functional theory}~\cite{HohK64}. This theory allows a reduction of the degrees of freedom, leading to a model which balances accuracy and computational cost, see also~\cite{YanMLW09,CanCM12,CanDMSV16} for a more detailed introduction.
%
%
\subsubsection{Validation of the model}
As an~energy functional, we consider (with $\Omega=\R^3$) 
\begin{align}
	\calE(\phibf)
	&= \frac 12\, \sum_{j=1}^N \int_{\Omega} \|\nabla \phi_j(r)\|^2 \dr
	+ \int_{\Omega} V_\text{ion}(r)\, \rho(\phibf(r)) \dr \notag \\
	&\hspace{2.2cm}+ \frac 12 \int_{\Omega}\int_{\Omega} \frac{\rho(\phibf(r))\, \rho(\phibf(r'))}{\|r-r'\|} \dr\drp 
	+ \int_{\Omega} \epsilon_\text{xc}(\rho(\phibf(r)))\, \rho(\phibf(r)) \dr \label{eq_energyKS}
\end{align}
with the ionic potential~$V_\text{ion}$, the exchange-correlation~$\epsilon_\text{xc}$, and the associated electronic charge density~$\rho(\phibf(r)) = \phibf(r) \cdot \phibf(r) = \sum_{j=1}^N |\phi_j(r)|^2$. Based on semi-empirically knowledge of the model, the particular exchange-correlation is described in~\cite{YanMLW09}. For more details, on this and the corresponding~{\em local density approximation}, we refer to~\cite{PerZ81,MacCMD91}. 
Following the physical setup, the ionic potential typically reads 
\[
  V_\text{ion}(r)
  = \sum_{j=1}^{N_\text{nuc}} \frac{z_j}{\| r-r_j\|}
\]
with the number of nuclei~$N_\text{nuc}$, the charge of the $j$th nuclei~$z_j$, and its position~$r_j$, which are assumed to be fixed. The obvious problem of the included singularities can be circumvented by considering core electrons (which are very close to a nucleus) as part of the corresponding core. For more details on this so-called {\em pseudopotential approximation}, we refer once more to~\cite{YanMLW09} and the references therein. As a consequence, we may assume in the following that~$V_\text{ion}$ in~\eqref{eq_energyKS} is a bounded potential. 

We are interested in the Kohn-Sham ground state, which means that we aim to minimize the energy~$\calE$ over~$V=\tilde{V}^N$ with $\tilde{V}=H^1_\text{per}(\Omega)$, i.e., the Sobolev space $H^1(\Omega)$ with periodic boundary conditions, subject to the constraint~$\out{\phibf}{\phibf}=\Onebf$. Hence, the minimization takes place on the Stiefel manifold $\Stiefel$. Following~\eqref{eq:aphi}, the corresponding bilinear form reads 	
\[
  a_\phibf(\vbf,\wbf)
  = \int_\Omega \tr\bigl((\nabla \vbf)^T\nabla\wbf\bigr) \dr 
  + 2\int_\Omega  V_\text{ion}\, \vbf\cdot\wbf \dr
  + \int_{\Omega} \gamma(\rho(\phibf))\, \vbf\cdot \wbf \dr
\]
with the (non-local) nonlinearity 
\[
%
  \gamma(\rho) 
  = 2\int_{\Omega} \frac{\rho(\phibf(r'))}{\|r-r'\|} \drp 
  + 2\, \frac{\rm d}{{\rm d}\rho} \big(\rho\,\epsilon_{\rm xc}(\rho)\big).
%
\]
\begin{lemma}
\label{lem:Garding}
Consider a fixed~$\phibf\in V$ and assume that $V_\text{ion}$ and $\gamma(\rho(\phibf))$ are bounded. Then the corresponding bilinear form
\[
  \tilde a_\phibf(v_j, w_j)
  = \int_\Omega (\nabla v_j)^T\nabla w_j \dr 
  + 2 \int_\Omega V_\text{ion}\, v_j\, w_j \dr
  + \int_{\Omega} \gamma(\rho(\phibf))\, v_j\, w_j \dr
\]
satisfies a G\aa rding inequality. Hence, there exists~$\sigma\in\R$ such that~$\tilde a_\phibf + \sigma\,(\,\cdot\,, \cdot\,)_{L^2(\Omega)}$ is a~symmetric, bounded, and coercive bilinear form. 
\end{lemma}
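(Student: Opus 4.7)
The plan is to verify the three properties of $\tilde a_\phibf + \sigma(\,\cdot\,,\cdot\,)_{L^2(\Omega)}$ separately, with the bulk of the work concentrated on establishing coercivity via a suitable shift.

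First I would observe that symmetry is immediate: each of the three summands defining $\tilde a_\phibf$ is manifestly symmetric in its arguments, and the shift by the $L^2$ inner product preserves this property. For boundedness on $\tilde V = H^1_\mathrm{per}(\Omega)$, I would apply the Cauchy--Schwarz inequality to each term. The gradient term is bounded by $\|\nabla v_j\|_{L^2(\Omega)}\,\|\nabla w_j\|_{L^2(\Omega)}$, while the potential and nonlinear terms are bounded by $(2\|V_\mathrm{ion}\|_{L^\infty(\Omega)} + \|\gamma(\rho(\phibf))\|_{L^\infty(\Omega)})\,\|v_j\|_{L^2(\Omega)}\,\|w_j\|_{L^2(\Omega)}$, using precisely the two boundedness hypotheses of the lemma. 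Summing these estimates yields continuity with a constant proportional to $1 + \|V_\mathrm{ion}\|_{L^\infty(\Omega)} + \|\gamma(\rho(\phibf))\|_{L^\infty(\Omega)}$, and the $L^2$ shift inherits boundedness from the Gelfand embedding $\tilde V \hookrightarrow L^2(\Omega)$.

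For the coercivity of the shifted form, I would start from
\[
  \tilde a_\phibf(v,v) = \|\nabla v\|_{L^2(\Omega)}^2 + \int_\Omega \bigl(2 V_\mathrm{ion} + \gamma(\rho(\phibf))\bigr)\, v^2 \dr,
\]
and estimate the lower-order contribution from below by $-M\,\|v\|_{L^2(\Omega)}^2$ with $M := 2\|V_\mathrm{ion}\|_{L^\infty(\Omega)} + \|\gamma(\rho(\phibf))\|_{L^\infty(\Omega)}$. Choosing $\sigma := M + 1$ (or any $\sigma > M$) then gives
\[
  \tilde a_\phibf(v,v) + \sigma\,\|v\|_{L^2(\Omega)}^2 \;\ge\; \|\nabla v\|_{L^2(\Omega)}^2 + \|v\|_{L^2(\Omega)}^2 \;=\; \|v\|_{H^1(\Omega)}^2,
\]
which is the desired coercivity on $\tilde V$ equipped with its natural $H^1$ norm.

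I do not foresee a genuine obstacle here: the result is structurally a standard Gårding-type argument, with the indefiniteness of $V_\mathrm{ion}$ and the sign-indefiniteness of $\gamma(\rho(\phibf))$ absorbed into the $L^2$ shift. The only delicate point is ensuring that $\gamma(\rho(\phibf))$ is indeed bounded, but this is an explicit hypothesis of the lemma (and, in the Kohn-Sham context, it is guaranteed a~posteriori once the Hartree and exchange-correlation parts of $\gamma$ are analysed on the regularity class to which $\phibf$ belongs; this is not needed for the present statement). Hence the three properties together establish the G\aa rding inequality claimed.
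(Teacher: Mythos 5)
Your proof is correct and follows essentially the same route as the paper: bound the indefinite lower-order terms from below using the boundedness of $V_\text{ion}$ and $\gamma(\rho(\phibf))$, then choose the shift $\sigma$ large enough to absorb them, yielding coercivity with respect to the $H^1$ norm (the paper uses the possibly negative lower bounds $c_V,c_\gamma$ instead of $L^\infty$ norms, giving a marginally sharper shift, and dismisses symmetry and boundedness as immediate, which you spell out). No gaps.
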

\begin{proof}
Let $c_V$ and $c_\gamma$ denote the (possibly negative) lower bounds of~$2V_\text{ion}$ and $\gamma(\rho(\phibf))$, respectively. Then, the definition of~$\tilde a_\phibf$ gives
\[
  \tilde a_\phibf(v, v)
  \ge \int_\Omega \|\nabla v\|^2 \dr + (c_V + c_\gamma)\, (v, v)_{L^2(\Omega)}  
  = \| v\|_{\tilde V}^2 - (1 - c_V - c_\gamma)\, \|v\|_{L^2(\Omega)}^2 
\]
for all $v\in \tilde V$. The coercivity of $\tilde a_\phibf + \sigma\,(\,\cdot\,, \cdot\,)_{L^2(\Omega)}$ then follows for any~$\sigma \ge 1 - c_V - c_\gamma$. Symmetry and boundedness are directly given. 
\end{proof}
Since we cannot ensure that $\tilde a_\phibf$ is coercive, we need to adapt the original nonlinear eigenvector problem~\eqref{eq:EVPweakComponents} by a {\em shift}: seek~$\phibf\in\Stiefel$ and $\lambda_1,\dots,\lambda_N\in\R$ such that
\begin{align*}
  \tilde a_\phibf(\phi_j, v_j) + \sigma\, (\phi_j, v_j)_{L^2(\Omega)}
  = (\lambda_j+\sigma)\, (\phi_j, v_j)_{L^2(\Omega)} \qquad\text{ for all }(v_1, \dots, v_N)\in V
\end{align*}
with the shift~$\sigma$ from Lemma~\ref{lem:Garding}. This gives a coupled system of nonlinear eigenvector problems, which satisfies Assumption~\ref{ass:aPhi} and, therefore, the theory of this paper is applicable.
%
%
\subsubsection{Numerical experiments}
We now illustrate the convergence behaviour of the new energy-adaptive Riemannian gradient descent scheme (RGD) and its variants and show that they are competitive with the established SCF iteration and the preconditioned DCM method. 
The numerical experiments are performed on an Intel(R) Core(TM) i7-8565U CPU@1.80GHz using MATLAB (version R2021b). The implementation is based on the MATLAB toolbox KSSOLV, cf.~\cite{YanMLW09}. The usage of this toolbox allows us to focus on the new eigenvalue iterations and their comparison to already existing methods. Note that the toolbox works with an additional factor of two in the electronic charge density. This, however, does not affect the convergence behaviour. We initially select an exemplary molecule system implemented in KSSOLV, namely $CO_2$ \mbox{($N=8$)}. 
In KSSOLV, a spatial discretization using a planewave discretization of functions in~$V := [H^1(\R^3)]^N$ is considered. As in \cite{YanMLW09},  
we use a~$32\times 32\times 32$ sampling grid for the wavefunctions in the $CO_2$ model. 

We shall first illustrate the convergence behaviour of the RGD. For the $CO_2$ molecule, we compare the following variants:
\begin{itemize}
	\item RGD from~\eqref{eq:RiemannianGradMethod} for several choices of a constant time step size~$\tau_n=\tau$ with $\tau\in\{0.05,0.1,0.15,0.2\}$,  
	\item RGD with the non-monotone line search as presented in Algorithm~\ref{alg:nonmonGradient} with the descent direction $\etabf^{(n)} = -\grad\calE(\phibf^{(n)})$ and parameters~$\alpha = 0.95$, $\beta = 10^{-4}$,  $\gamma_\text{min} = 10^{-4}$, $\gamma_\text{max} = 1.0$, $\gamma_0 = 10^{-2}$, and $\delta = 0.5$.
\end{itemize}
For both variants, the polar decomposition based retraction~\eqref{eq:retr_pol} is used. As a~stopping criterion, we consider the $H$-norm of the residual to fall below the tolerance $\tol=10^{-6}$. The linear systems are solved up to the higher accuracy of $10^{-8}$.

Figure~\ref{fig:convergence}~(left) shows the evolution of the residuals in the iteration. In accordance with the theoretical predictions, we observe convergence for sufficiently small constant step sizes. A look into the corresponding errors in the energy (with respect to a reference minimal energy computed to higher accuracy) depicted in Figure~\ref{fig:convergence}~(right) shows that for $\tau=0.2$, after an initial decay, the method approaches some other critical point on a~higher energy level. Furthermore, for smaller choices of $\tau$, the linear convergence to the ground state is observed. There is probably an optimal choice of the step size around~$\tau=0.15$ that minimizes the linear rate of convergence. However, the non-monotone line search converges much faster and appears to be much more efficient for this example and many others that we have tried. 
\begin{figure}
\input{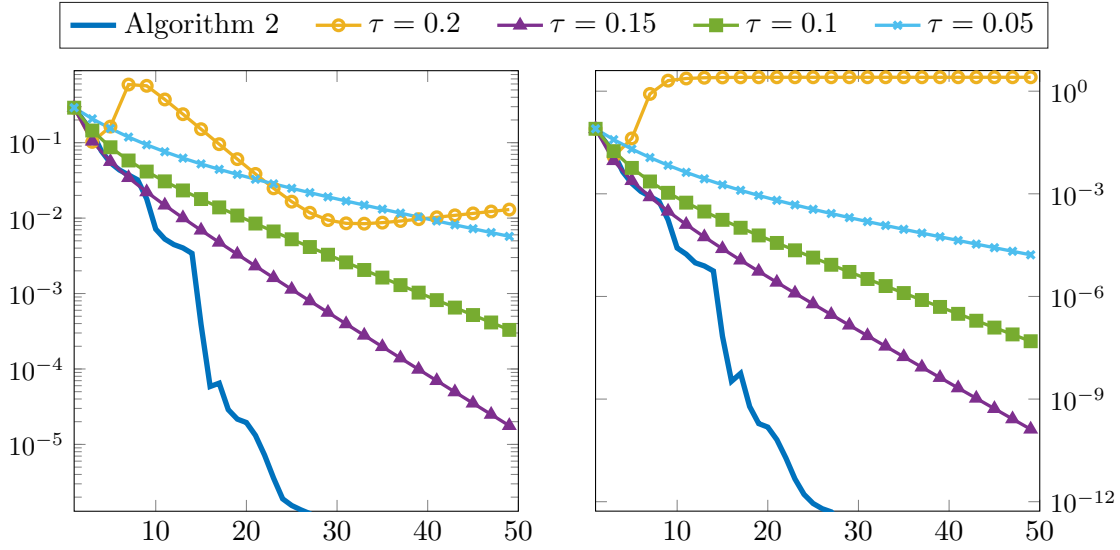}
\caption{Convergence history of the residual (left) and energy (right) for the $CO_2$ model for different (fixed) step sizes and the non-monotone line search from Algorithm~\ref{alg:nonmonGradient}.}
\label{fig:convergence}
\end{figure}

While the line search optimizes the iteration count, the cost per iteration step is largely reduced by the inexact solution of linear system for the gradient computation in each step. We will refer to the corresponding scheme as:
\begin{itemize}
	\item inexact RGD with the non-monotone line search as presented in Algorithm~\ref{alg:nonmonGradient} with~$\etabf^{(n)}$ being the preconditioned MINRES approximation given in~\eqref{etan:precond:RGD}. 
	We use the MINRES implementation of MATLAB using the KSSOLV built-in Teter preconditioner \cite{Teter89,YanMLW09}.
\end{itemize}
We also compare the performance of the exact and inexact RGD with the established schemes
\begin{itemize}
	\item SCF: self-consistent field iteration as readily available in KSSOLV using LOPCG to solve the linear eigenvalue problem in each step up to tolerance $10^{-8}$, 
	\item DCM: direct constrained minimization as defined in \eqref{etan:precond:precondDCM} with non-monotone line search. 
	The preconditioner is given by $3$ steps of the preconditioned MINRES iteration as in the inexact RGD. 
\end{itemize}
All schemes use the same initial guess to the wavefunction and the $qR$-based retraction defined in \eqref{eq:retr_qR}. 
For the RDG variants and DCM, we use the non-monotone line search with the prescribed parameters given above.

Table~\ref{tab:runtimes} shows the CPU times and (outer) iteration counts of the four methods. While solving the linear systems too accurately seems to be suboptimal in terms of computational complexity, the numbers clearly indicate that the inexact RGD substantially accelerates the simulation and is very competitive with SCF. The closely related preconditioned DCM variant performs equally well asymptotically but, according to our experience, is a bit slower in the initial phase when the residuals are still large. 
\newcolumntype{P}{C{2.2cm}}
\begin{table}[h]
	\caption{CPU time (in seconds) and number of needed iteration steps to achieve an approximation of the ground state of the $CO_2$ molecule with tolerance $10^{-6}$ in the residual. }\label{tab:runtimes}
	\begin{tabular}{r||P|P|P|P|}
		& SCF & RGD & inexact RGD & prec.~DCM 
		\\[0.2em] \hline 
		CPU time & $12.3$  & $36.7$  & $11.6$  & $16.6$  \\[0.2em]
		\# iterations & $8$ & $28$ & $37$ & $45$
	\end{tabular}
\end{table}

According to our experience, the competitiveness of inexact RGD is representative. An experiment for the more challenging molecule petacene, also implemented in KSSOLV, supports this assessment; see Table~\ref{tab:runtimespenta}. Due to the large number of electrons in pentacene ($N=102$), a~sampling grid of size~$64\times 32\times 48$ is used for the  spatial planewave discretization. 

\newcolumntype{P}{C{2.2cm}}
\begin{table}[t]
	\caption{CPU time (in seconds) and number of needed iteration steps to achieve an approximation of the ground state of the {\em pentacene} molecule with tolerance $10^{-6}$ in the residual. }\label{tab:runtimespenta}
	\begin{tabular}{r||P|P|P|}
		& SCF  & inexact RGD & prec.~DCM 
		\\[0.2em] \hline 
		CPU time & $3211$  & $2204$  & $2655$  \\[0.2em]
		\# iterations & $14$ & $43$ & $51$
	\end{tabular}
\end{table} 

%
%
\section{Conclusion}\label{sect:conclusion}
In this paper, we have generalized the energy-adaptive gradient descent scheme from \cite{HenP20} to nonlinear eigenvector problems formulated on the infinite-dimensional Stiefel manifold. We have shown convergence of the method and a guaranteed energy decay of the iterates if the step size is sufficiently small. 
Moreover, we have introduced a non-monotone step size control and discussed the inexact variants, which accelerate the proposed method significantly. In total, this gives a novel energy-adaptive descent scheme, which is competitive with existing schemes such as SCF and DCM. 

\section*{Acknowledgement} The authors would like to thank Patrick Henning for his inspiring work on the energy-adaptive Riemannian gradient descent method for the Gross-Pitaevskii problem and Benjamin Stamm for raising the question of its applicability to the Kohn-Sham model.  	
%
%
\bibliographystyle{alpha}
\bibliography{refKS}

\newcommand{\etalchar}[1]{$^{#1}$}
\def\cprime{$'$}
\begin{thebibliography}{YMLW09}

\bibitem[AA09]{AloA09}
F.~Alouges and C.~Audouze.
\newblock Preconditioned gradient flows for nonlinear eigenvalue problems and
  application to the {H}artree-{F}ock functional.
\newblock {\em Numer. Meth. Part. D. E.}, 25(2):380--400, 2009.

\bibitem[AHP21]{AltHP21}
R.~Altmann, P.~Henning, and D.~Peterseim.
\newblock The {$J$}-method for the {G}ross-{P}itaevskii eigenvalue problem.
\newblock {\em Numer. Math.}, 148:575--610, 2021.

\bibitem[AHP22]{altmann2020localization}
R.~Altmann, P.~Henning, and D.~Peterseim.
\newblock Localization and delocalization of ground states of {B}ose-{E}instein
  condensates under disorder.
\newblock {\em SIAM J. Appl. Math.}, 82(1):330--358, 2022.

\bibitem[AM12]{AbsiM12}
P.-A. Absil and J.~Malick.
\newblock Projection-like retractions on matrix manifolds.
\newblock {\em SIAM J. Optim.}, 22(1):135--158, 2012.

\bibitem[AMS08]{AbsiMS08}
P.-A. Absil, R.~Mahony, and R.~Sepulchre.
\newblock {\em Optimization Algorithms on Matrix Manifolds}.
\newblock Princeton University Press, Princeton, NJ, 2008.

\bibitem[AP19]{AltP19}
R.~Altmann and D.~Peterseim.
\newblock Localized computation of eigenstates of random {S}chr\"odinger
  operators.
\newblock {\em SIAM J. Sci. Comput.}, 41:B1211--B1227, 2019.

\bibitem[BD04]{BaD04}
W.~Bao and Q.~Du.
\newblock Computing the ground state solution of {B}ose-{E}instein condensates
  by a normalized gradient flow.
\newblock {\em SIAM J. Sci. Comput.}, 25(5):1674--1697, 2004.

\bibitem[Bra07]{Bra07}
D.~Braess.
\newblock {\em Finite Elements - Theory, Fast Solvers, and Applications in
  Solid Mechanics}.
\newblock Cambridge University Press, New York, third edition, 2007.

\bibitem[Can01]{Can01}
E.~Canc{\`e}s.
\newblock Self-consistent field algorithms for {K}ohn--{S}ham models with
  fractional occupation numbers.
\newblock {\em J. Chem. Phys.}, 114(24):10616--10622, 2001.

\bibitem[CCM12]{CanCM12}
E.~Canc{\`e}s, R.~Chakir, and Y.~Maday.
\newblock Numerical analysis of the planewave discretization of some
  orbital-free and {K}ohn-{S}ham models.
\newblock {\em ESAIM Math. Model. Numer. Anal.}, 46(2):341--388, 2012.

\bibitem[CDM{\etalchar{+}}16]{CanDMSV16}
E.~Canc{\`e}s, G.~Dusson, Y.~Maday, B.~Stamm, and M.~Vohralík.
\newblock A perturbation-method-based post-processing for the planewave
  discretization of {K}ohn-{S}ham models.
\newblock {\em J. Comput. Phys.}, 307:446--459, 2016.

\bibitem[CKL21]{doi:10.1137/20M1332864}
E.~Canc{\`e}s, G.~Kemlin, and A.~Levitt.
\newblock Convergence analysis of direct minimization and self-consistent
  iterations.
\newblock {\em SIAM J. Matrix Anal. Appl.}, 42(1):243--274, 2021.

\bibitem[CLB00]{CanL00}
E.~Canc\`es and C.~Le~Bris.
\newblock On the convergence of {SCF} algorithms for the {H}artree-{F}ock
  equations.
\newblock {\em M2AN Math. Model. Numer. Anal.}, 34(4):749--774, 2000.

\bibitem[EAS98]{EdeAS98}
A.~Edelman, T.~A. Arias, and S.~T. Smith.
\newblock The geometry of algorithms with orthogonality constraints.
\newblock {\em SIAM J. Matrix Anal. Appl.}, 20(2):303--353, 1998.

\bibitem[GL13]{GoluV13}
G.H. Golub and C.F.~Van Loan.
\newblock {\em Matrix Computations}.
\newblock The Johns Hopkins University Press, Baltimore, London, 4th edition,
  2013.

\bibitem[HK64]{HohK64}
P.~Hohenberg and W.~Kohn.
\newblock Inhomogeneous electron gas.
\newblock {\em Phys. Rev.}, 136:B864--B871, 1964.

\bibitem[HLWY20]{HuLWY20}
J.~Hu, X.~Liu, Z.-W. Wen, and Y.-X. Yuan.
\newblock A brief introduction to manifold optimization.
\newblock {\em J. Oper. Res. Soc. China}, 8:199--248, 2020.

\bibitem[HM12]{HarM12}
P.~Harms and A.~Mennucci.
\newblock Geodesics in infinite dimensional {S}tiefel and {G}rassmann
  manifolds.
\newblock {\em C. R. Math.}, 350(15):773--776, 2012.

\bibitem[HP20]{HenP20}
P.~Henning and D.~Peterseim.
\newblock Sobolev gradient flow for the {G}ross-{P}itaevskii eigenvalue
  problem: global convergence and computational efficiency.
\newblock {\em SIAM J. Numer. Anal.}, 58(3):1744--1772, 2020.

\bibitem[HSW21]{HEID2021110165}
P.~Heid, B.~Stamm, and T.~P. Wihler.
\newblock Gradient flow finite element discretizations with energy-based
  adaptivity for the {G}ross-{P}itaevskii equation.
\newblock {\em J. Comput. Phys.}, 436:110165, 2021.

\bibitem[JU21]{JarU21ppt}
E.~Jarlebring and P.~Upadhyaya.
\newblock Implicit algorithms for eigenvector nonlinearities.
\newblock {\em Numer. Algorithms}, (online), 2021.

\bibitem[KFT13]{KanFT13}
T.~Kaneko, S.~Fiori, and T.~Tanaka.
\newblock Empirical arithmetic averaging over the compact {S}tiefel manifold.
\newblock {\em IEEE Trans. Signal Proces.}, 61(4):883--894, 2013.

\bibitem[KS65]{KohS65}
W.~Kohn and L.~J. Sham.
\newblock Self-consistent equations including exchange and correlation effects.
\newblock {\em Phys. Rev.}, 140:A1133--A1138, 1965.

\bibitem[LB05]{LeB05}
C.~Le~Bris.
\newblock Computational chemistry from the perspective of numerical analysis.
\newblock {\em Acta Numer.}, 14:363--444, 2005.

\bibitem[LSY01]{LSY01}
E.~H. Lieb, R.~Seiringer, and J.~Yngvason.
\newblock A rigorous derivation of the {G}ross-{P}itaevskii energy functional
  for a two-dimensional {B}ose gas.
\newblock {\em Comm. Math. Phys.}, 224(1):17--31, 2001.

\bibitem[LT85]{LancT85}
P.~Lancaster and M.~Tismenetsky.
\newblock {\em The Theory of Matrices}.
\newblock Academic Press, Orlando, FL, 2nd edition, 1985.

\bibitem[MCMD91]{MacCMD91}
J.~M. MacLaren, D.~P. Clougherty, M.~E. McHenry, and M.~M. Donovan.
\newblock Parameterised local spin density exchange-correlation energies and
  potentials for electronic structure calculations~{I}. {Z}ero temperature
  formalism.
\newblock {\em Comput. Phys. Commun.}, 66(2):383--391, 1991.

\bibitem[PS03]{PiS03}
L.~P. Pitaevskii and S.~Stringari.
\newblock {\em {B}ose-{E}instein Condensation}.
\newblock Oxford University Press, Oxford, 2003.

\bibitem[PZ81]{PerZ81}
J.~P. Perdew and A.~Zunger.
\newblock Self-interaction correction to density-functional approximations for
  many-electron systems.
\newblock {\em Phys. Rev. B}, 23:5048--5079, 1981.

\bibitem[SA19]{SatA19}
H.~Sato and K.~Aihara.
\newblock Cholesky {QR}-based retraction on the generalized {S}tiefel manifold.
\newblock {\em Comput. Optim. Appl.}, 72(2):293--308, 2019.

\bibitem[SRNB09]{SchRNB09}
R.~Schneider, T.~Rohwedder, A.~Neelov, and J.~Blauert.
\newblock Direct minimization for calculating invariant subspaces in density
  functional computations of the electronic structure.
\newblock {\em J. Comput. Math.}, 27(2-3):360--387, 2009.

\bibitem[TPA89]{Teter89}
M.~P. Teter, M.~C. Payne, and D.~C. Allan.
\newblock Solution of {S}chr\"odinger's equation for large systems.
\newblock {\em Phys. Rev. B}, 40:12255--12263, 1989.

\bibitem[Usc10]{Usch10}
A.~Uschmajew.
\newblock Well-posedness of convex maximization problems on {S}tiefel manifolds
  and orthogonal tensor product approximations.
\newblock {\em Numer. Math.}, 115:309--331, 2010.

\bibitem[WY13]{WenY13}
Z.~Wen and W.~Yin.
\newblock A feasible method for optimization with orthogonality constraints.
\newblock {\em Math. Program.}, 142:397--434, 2013.

\bibitem[YMLW09]{YanMLW09}
C.~Yang, J.~C. Meza, B.~Lee, and L.-W. Wang.
\newblock {KSSOLV} -- a {MATLAB} toolbox for solving the {K}ohn-{S}ham
  equations.
\newblock {\em ACM Trans. Math. Softw.}, 36(2):1--35, 2009.

\bibitem[YMW06]{YanMW06}
C.~Yang, J.~C. Meza, and L.-W. Wang.
\newblock A constrained optimization algorithm for total energy minimization in
  electronic structure calculation.
\newblock {\em J. Comput. Phys.}, 217(2):709--721, 2006.

\bibitem[Zei90]{Zei90a}
E.~Zeidler.
\newblock {\em Nonlinear Functional Analysis and its Applications {IIa}: Linear
  Monotone Operators}.
\newblock Springer-Verlag, New York, 1990.

\bibitem[ZH04]{ZhaW04}
H.~Zhang and W.W. Hager.
\newblock A nonmonotone line search technique and its application to
  unconstrained optimization.
\newblock {\em SIAM J. Optim.}, 14(4):1043--1056, 2004.

\bibitem[Zha19]{Zha21ppt}
Z.~Zhang.
\newblock Exponential convergence of {S}obolev gradient descent for a~class of
  nonlinear eigenproblems.
\newblock ArXiv e-print 1912.02135, 2019.

\end{thebibliography}
%
%
\appendix
\section{Proofs of Second-order Bounds for the Retractions}
%
%

\subsection{Proof of Proposition~\ref{prop:retr_est}}\label{app:proof:projective}
%
	For
	$$
	\wbf(t):=\calR(\phibf, t\etabf)-(\phibf+t\etabf)=(\phibf+t\etabf)\bigl((\Onebf+t^2\out{\etabf}{\etabf})^{-1/2}-\Onebf\bigr)
	$$
	we have
	$$
	\|\wbf(t)\|_{a_{\phibf}}
	\leq \|\phibf+t\etabf\|_{a_{\phibf}}\,  \|(\Onebf+t^2\out{\etabf}{\etabf})^{-1/2}-\Onebf\|_2,
	$$
	where $\|\cdot\|_2$ denotes the spectral matrix norm. Let $\mu_1\geq\ldots\geq\mu_N>0$ be the eigenvalues of the symmetric, positive semidefinite matrix $\out{\etabf}{\etabf}$. Then, 
	$$
	\|(\Onebf+t^2\out{\etabf}{\etabf})^{-1/2}-\Onebf\|_2
	=\max_{1\leq j\leq N} \Bigl(1-\frac{1}{\sqrt{1+t^2\mu_j }}\Bigr)
	=1-\frac{1}{\sqrt{1+t^2\mu_1}}.
	$$
	By the mean value theorem, there exists $\theta\in (0, t)$ such that 
	$$
	1-\frac{1}{\sqrt{1+t^2\mu_1}}
	= \frac{\theta\, t\, \mu_1 }{\sqrt{(1+\theta^2\mu_1)^3}}. 
	$$
	This implies 
	$$
	\|(\Onebf+t^2\out{\etabf}{\etabf})^{-1/2}-\Onebf\|_2
	\leq t^2\mu_1
	= t^2\, \| \out{\etabf}{\etabf}\|_2
	\leq t^2\, \|\etabf\|_H^2.
	$$
	Thus, the assertion holds true.\hfill\qed
%
%
\subsection{Proof of Proposition~\ref{prop:retr_est_qR}}\label{app:proof:qR}
%
	For a~curve $\varphibf(t)=\phibf+t\etabf$, consider the $qR$~decomposition $\varphibf(t)=\qbf(t)R(t)$. Then we have
	\begin{align}
	\|\calR(\phibf, t\etabf)-(\phibf+t\etabf)\|_{a_\phibf} & = \|\qbf(t)-\qbf(t)R(t)\|_{a_\phibf} 
	\leq \|\qbf(t)\|_{a_\phibf}\|R(0)-R(t)\|_2 \nonumber\\
	& \leq \|\phibf+t\etabf\|_{a_\phibf}\|R^{-1}(t)\|_2\int_0^t\|\dot{R}(s)\|_F\,{\rm d}s. \label{eq:intRdot}
	\end{align}
	Differentiating the relation 
	\begin{equation}\label{eq:RTR}
	\Onebf+t^2\out{\etabf}{\etabf} 
	= \out{\phibf+t\etabf}{\phibf+t\etabf} 
	= R^T(t)R(t),
	\end{equation}
	we obtain 
	$$
	2t\,\out{\etabf}{\etabf} = \dot{R}^T(t)R(t)+R^T(t)\dot{R}(t).
	$$
	Multiplying this equation by $R^{-T}(t)$ and $R^{-1}(t)$ from the left and right, respectively, yields
	$$
	\bigl(\dot{R}(t)R^{-1}(t)\bigr)^T+\dot{R}(t)R^{-1}(t)=2t\,R^{-T}(t)\out{\etabf}{\etabf} R^{-1}(t).
	$$
	Since $\dot{R}(t)R^{-1}(t)$ is upper triangular, we obtain
	$$
	\dot{R}(t)=2t\,\mathrm{up}\bigl(R^{-T}(t)\out{\etabf}{\etabf} R^{-1}(t)\bigr)R(t),
	$$
	where 
	$$
	\bigl(\mathrm{up}(M)\bigr)_{ij}=\left\{\begin{array}{cl}
	M_{ij},   & \text{ if } 1\leq i<j\leq N, \\
	\tfrac{1}{2}M_{ij}, & \text{ if } 1\leq i=j\leq N, \\
	0,        & \text{ otherwise}
	\end{array}\right.
	$$
	for any symmetric matrix $M=[M_{ij}]\in\mathbb{R}^{N\times N}$.  
	As before, let $\mu_1\geq\ldots\geq\mu_N>0$ denote the eigenvalues of~$\out{\etabf}{\etabf}$. 
	Using $2\,\|\mathrm{up}(M)\|_F^2\leq\|M\|_F^2$ and \eqref{eq:RTR}, we have
	\begin{align*}
	2\,\big\|\mathrm{up}\bigl(R^{-T}(t) \out{\etabf}{\etabf} R^{-1}(t)\bigr)\big\|_F^2 
	& \leq \big\|R^{-T}(t) \out{\etabf}{\etabf} R^{-1}(t)\big\|_F^2\\
	& = \tr\bigl(\out{\etabf}{\etabf} R^{-1}(t)R^{-T}(t)\out{\etabf}{\etabf} R^{-1}(t)R^{-T}(t)\bigr)\\
	& = \big\|\out{\etabf}{\etabf}(\Onebf+t^2\out{\etabf}{\etabf})^{-1}\big\|_F^2 \\
	& = \sum_{i=1}^N\left(\frac{\mu_i}{1+t^2\mu_i}\right)^2 
	\leq \sum_{i=1}^N \mu_i^2 = \big\|\out{\etabf}{\etabf} \big\|_F^2\leq\|\etabf\|_H^4
	\end{align*}
	and, hence, 
	\begin{equation} \label{eq:dotR}
	\|\dot{R}(t)\|_F\leq 2\,t\,\|\mathrm{up}\bigl(R^{-T}(t)\out{\etabf}{\etabf} R^{-1}(t)\bigr)\|_F\|R(t)\|_2
	\leq\sqrt{2}\,t\,\|\etabf\|_H^2\|R(t)\|_2.
	\end{equation}
	Furthermore, \eqref{eq:RTR} implies that
	\begin{align}
	\|R(t)\|_2     & = \|\Onebf+t^2\out{\etabf}{\etabf}\|_2^{1/2}       \leq (1+t^2\|\etabf\|_H^2)^{1/2}, \label{eq:normR}\\
	\|R^{-1}(t)\|_2 & = \|(\Onebf+t^2\out{\etabf}{\etabf})^{-1}\|_2^{1/2} <1.
	\label{eq:normRinv}
	\end{align}
	Thus, the claimed estimate follows from \eqref{eq:intRdot}, \eqref{eq:dotR}, \eqref{eq:normR}, and \eqref{eq:normRinv}. \hfill\qed

\end{document}